\numberwithin{equation}{section}
\newtheorem{theorem}{Theorem}[section]
\newtheorem{lemma}[theorem]{Lemma}
\newtheorem{proposition}[theorem]{Proposition}
\newtheorem{corollary}[theorem]{Corollary}
\theoremstyle{definition}
\newtheorem{definition}[theorem]{Definition}
\theoremstyle{remark}
\newtheorem{remark}[theorem]{Remark}
\newcommand{\R}{\mathbb{R}}
\newcommand{\N}{\mathbb{N}}
\let\mb\mathbb
\let\mc\mathcal
\let\ml\mathcal
\let\mr\mathrm
\let\ve\varepsilon
\let\vp\varphi
\let\ol\overline
\let\del\partial
\let\wto\rightharpoonup
\let\l\left
\let\r\right
\newcommand{\lv}{\lVert}
\newcommand{\rv}{\rVert}
\newcommand{\ce}{\mathrel{\mathop:}=} 
\renewcommand\Re{\operatorname{Re}}
\renewcommand\Im{\operatorname{Im}}
\newcommand{\til}{\widetilde}
\renewcommand{\hat}{\widehat}
\newcommand{\pt}{\partial}
\newcommand{\al}{\alpha}
\newcommand{\lam}{\lambda}
\newcommand{\om}{\omega}
\newcommand{\Del}{\Delta}
\newcommand{\rad}{\mathrm{rad}}
\title[2d-NLS with point interaction]{On stability and instability of standing waves for 2d-nonlinear Schr\"odinger equations with point interaction}
\date{\today}
\author[N. Fukaya]{
Noriyoshi Fukaya}
\address[N. Fukaya]{
Department of Mathematics,
Faculty of Science Division I,
Tokyo University of Science,
Tokyo, 162-8601, Japan}
\email{fukaya@rs.tus.ac.jp}
\author[V. Georgiev]{Vladimir Georgiev}
\address[V. Georgiev]{Dipartimento di Matematica Universit\`a di Pisa
                      Largo B. Pontecorvo 5, 56100 Pisa, Italy\\
                      and \\
                      Faculty of Science and Engineering \\ Waseda University \\
                      3-4-1, Okubo, Shinjuku-ku, Tokyo 169-8555 \\
                      Japan and IMI--BAS, Acad.
                      Georgi Bonchev Str., Block 8, 1113 Sofia, Bulgaria}
\email{georgiev@dm.unipi.it}
\author[M. Ikeda]{Masahiro Ikeda}
\address[M. Ikeda]{
Center for Advanced Intelligence Project, RIKEN, Japan/Department of Mathematics, Faculty of Science and Technology,
Keio University, 3-14-1 Hiyoshi, Kohoku-ku, Yokohama 223-8522, Japan}
\email{masahiro.ikeda@riken.jp/masahiro.ikeda@keio.jp}
\subjclass[2020]{35Q55; 35B35}
\keywords{
nonlinear Schr\"{o}dinger equation,
point interaction,
standing waves,
stability,
instability
}
\begin{document}
\maketitle

\begin{abstract}
We study existence and stability properties of ground-state standing waves for two-dimensional nonlinear Schr\"odinger equation with a point interaction and a focusing power nonlinearity. The Schr\"odinger operator with a point interaction $(-\Del_\al)_{\al\in\R}$ describes a one-parameter family of self-adjoint realizations of the Laplacian with delta-like perturbation. The operator $-\Del_\al$ always has a unique simple negative eigenvalue. We prove that if the frequency of the standing wave is close to the negative eigenvalue, it is stable. Moreover, if the frequency is sufficiently large, we have the stability in the $L^2$-subcritical or critical case, while the instability in the $L^2$-supercritical case.
\end{abstract}

    \section{Introduction}

We consider the following nonlinear Schr\"odinger equation (NLS) with a focusing power nonlinearity in two spatial dimension:
\begin{equation}\label{NLS}
  i \del_t u = -\Del_\al u - |u|^{p-1}u, \quad (t,x) \in \R \times \R^2,
\end{equation}
where $p > 1$ and $-\Del_\al$ is the Laplacian with a point interaction with strength $\al \in \R$ at the origin (see \eqref{eq:op_dom}--\eqref{eq:expG1} for the precise definition).

The study of the Laplace operator with point interactions in $\R^N$ ($N=1$, $2$, or $3$) seems to become intensive research area in the last decades. The first rigorous attempt to define and study the spectral properties of these operators was done in \cite{BF61} by Berezin and Faddeev in 1961. Their study was extended in the work \cite{AH81} by Albeverio and H\o{}egh-Krohn in 1981.

A simplest way to introduce a singular perturbation at a point is to consider the potential perturbation $-\Del+V_\varepsilon$ of the Laplace operator with regular potentials $V_\ve$ spiking up and shrinking around a point in the limit $\ve \to 0$. This approach is well-known for dimension $N=1$ \cite{AGKK84}, $N=2$ \cite{AGKH87}, and $N=3$ \cite{AGK82} (we also refer to \cite{AGKH05} for a comprehensive overview). In the case of one singular point, this approximation gives a self-adjoint operator $-\Del_\al$ depending on the parameter $\al \in \R$. To be more precise, starting with the symmetric operator of $-\Del|_{ C_c^\infty (\R^N\setminus\{0\}) }$, one can characterize all its nontrivial self-adjoint extension on $L^2(\R^N)$ by means of a parameter $\al\in\R$. A detailed overview of the construction and the main properties of $-\Del_{\al}$ can be found in \cite{CFN}. Fractional powers of the Laplace operator $-\Del_\al$ with singular perturbations are studied in \cite{GMS18, MS18}.

The dispersive properties of the Schr\"odinger group $(e^{it\Del_\al})_{t\in\R}$ have been studied intensively in the last years. In one-dimensional case, the dispersive and Strichartz estimates and boundedness wave operators have been studied, for example, in \cite{AN09, DMW11, HMZ07}. In higher dimensional case, such properties have been studied recently by \cite{CMY19, Y21} in $N=2$, \cite{DMSY18} in $N=3$. See also \cite{DPT06} for the weighted dispersive estimate in three dimension.

By using these dispersive properties, one can establish existence of nonlinear evolution flow. The local well-posedness for \eqref{NLS} (in two or three-dimensional cases) in the domain $D(-\Del_\al)$ has been studied in \cite{CFN}. See, e.g., \cite{AN09, DMW11} for the one-dimensional case.

Recently, the nonlinear dynamics around standing waves in one dimension are intensively studied in various contexts depending on whether the potential is attractive or repulsive. In the attractive regime, the orbital stability and instability of standing waves have been studied by \cite{FOO08, GHW04, KO09}, the asymptotic stability has been done in \cite{CM19, MMSas, MMS20}, and the strong instability has been done in \cite{OY16, FO19}. In the repulsive regime, the orbital stability and instability have been studied by \cite{FJ08, LFFKS08}, and the global dynamics below ground states were studied in \cite{II17}.

As related topics, we mention the NLS with the concentrated nonlinearity. See \cite{AFH21, BKKS08, KKS12} for $N=1$, \cite{ACCT20, ACCT21} for $N=2$, and \cite{AN13} for $N=3$.

However, much less is known about the nonlinear dynamics and the existence and stability/instability properties of the ground states for \eqref{NLS}. Up to our knowledge, there are no results treating such properties in the two-dimensional case. The main goal of the work is to cover this case and establish fundamental properties of ground states for \eqref{NLS} such as existence, uniqueness, nondegeneracy, and its orbital stability/instability.

To state our main results, we shortly give the definition of the operator $-\Del_\al$ in two dimension (see \cite[Chapter I.5]{AGKH05} for more details). The class of self-adjoint extensions in $L^2(\R^2)$ of the positive and densely defined symmetric operator $-\Del|_{C^\infty_0(\R^2\setminus\{0\})}$ is a one-parameter family of operators $(-\Del_\al)_{\al\in(-\infty,+\infty]}$. The extension $-\Del_{\al=\infty}$ is the Friedrichs extension and is precisely the free Laplacian on $L^2(\R^2)$ with domain $H^2(\R^2)$. All other extensions for $\al\in\R$ represent nontrivial operators with a point interaction at the origin, and are characterized explicitly by
\begin{gather}
\label{eq:op_dom}
  D(-\Del_\al) = \left\{  f + \frac{f(0)}{\beta_\al(\lam)} G_\lam \colon\, f\in H^2(\R^2) \right\},
\\\label{eq:opaction}
  (-\Del_\al+\lam)g = (-\Del+\lam)f \quad \text{for } g = f + \frac{f(0)}{\beta_\al(\lam)} G_\lam \in D(-\Del_\al).
\end{gather}
Here $\lam>0$ is a fixed constant with $\lam\ne-e_\al$ (see \eqref{eq:negaengen} for the definition of $e_\al$),
\begin{equation}\label{eq.2d1}
   \beta_\al(\lam) \ce \al + \frac{\gamma}{2\pi} + \frac{1}{2\pi} \ln \frac{\sqrt{\lam}}{2} ,
\end{equation}
$\gamma>0$ denoting Euler--Mascheroni constant, and $G_{\lam}$ is the Green function of $-\Del+\lam$ on $\R^2$, defined by the distributional relation $(-\Del+\lam)G_{\lam}=\delta$, that is
\begin{equation}\label{eq:expG1}
  G_{\lam}(x) \ce \frac1{2\pi} \ml{F}^{-1} \biggl[\frac{1}{|\xi|^2+\lam}\biggr](x) = \frac1{(2\pi)^2} \int_{\R^2} \frac{e^{ix\cdot\xi}}{|\xi|^2+\lam} \, d\xi,
\end{equation}
where $\ml{F}^{-1}$ is the inverse Fourier transform. Note that $f(0)$ makes sense for $f\in H^2(\R^2)$ from the embedding $H^2(\R^2)\hookrightarrow C(\R^2)$.

From the the expression~\eqref{eq:expG1}, we can easily check that
\begin{gather}
\label{eq:GlamH1ep}
  G_\lam \in H^{1-\ve}(\R^2),\quad G_\lam \notin H^1(\R^2),
\\\label{eq:GlamGmu}
  G_{\lam} - G_\mu \in H^{3-\ve}(\R^2), \quad G_{\lam} - G_\mu \notin H^{3}(\R^2) \quad \text{if $\lam \ne \mu$}
\end{gather}
for all $\ve > 0$ and $\lam, \mu > 0$. The function $G_\lam$ is also represented as
\begin{equation}\label{eq:defGlambda}
  G_\lam(x) = \frac{1}{2\pi} K_{0}(\sqrt{\lam}|x|),
\end{equation}
where $K_0$ is the modified Bessel function of the second kind (or the Macdonald function) of order zero. From the expression~\eqref{eq:defGlambda}, $G_\lam$ is positive, radial, and strictly decreasing function with
\begin{equation}\label{eq:Gexpdecay}
  G_\lam(r) \sim
  \left\{\begin{array}{@{}c l@{}}
   -\ln(\sqrt{\lam} r) & \text{as } r \to 0,
\\[\jot] r^{-1/2} e^{-\sqrt{\lam}r} & \text{as } r \to \infty
  \end{array} \right.
\end{equation}
(see \cite[Chapter 10]{NIST:DLMF} for more properties of $K_\nu$).

From the property \eqref{eq:GlamGmu} we can check that the definition of $-\Del_\al$ is independent of $\lam$. Indeed, if $g=f+f(0)\beta_\al(\lam)^{-1}G_\lam$ for some $f\in H^2(\R^2)$ and $\lam\ne-e_\al$, then for $\mu\ne-e_\al$, we have the decomposition
\[g = \til{f} + \frac{ \til{f}(0) }{ \beta_\al (\mu) } G_\mu , \quad \text{where } \til{f} \ce f + \frac{ f(0) }{ \beta_\al (\lam) } (G_\lam-G_\mu) \in H^2(\R^2),
  \]
and the relation
\[(-\Del_\al + \mu) \biggl( \til{f}+\frac{ \til{f}(0) }{\beta_\al(\mu)} G_\mu \biggr) = (-\Del + \lam) f + ( \mu - \lam ) \biggl( f+\frac{ f(0) }{ \beta_\al(\lam) } G_\lam \biggr).
  \]
This means the independence.

The spectral properties of $-\Del_\al$ are also known (see \cite[Theorem 5.4]{AGKH05}). The essential spectrum of $-\Del_\al$ are given by
\begin{align}\label{eq:spectrum1}
  &\sigma_{\mr{ess}} (-\Del_\al) = \sigma_{\mr{ac}}(-\Del_\al) = [0 , +\infty), &
  &\sigma_{\mr{sc}} (-\Del_\al) = \emptyset,
\end{align}
where $\sigma_{\mathrm{ac}}$ and $\sigma_{\mathrm{sc}}$ denote the set of the absolutely and singularly continuous spectrum, respectively. The operator $-\Del_\al$ has a simple negative eigenvalue
\begin{align}\label{eq:negaengen}
  &\sigma_{\mathrm{p}}(-\Del_\al) = \{e_\al\}, &
  &e_\al \ce -4e^{-4\pi\al-2\gamma}.
\end{align}
In this sense, $-\Del_\al$ can be regarded as an Schr\"odinger operator with an attractive potential. The normalized eigenfunction corresponding to the eigenvalue $e_\al$ is
\[\chi_\al\ce\frac{G_{-e_\al}}{\|G_{-e_\al}\|_{L^2}}.
  \]
Note that $\beta_\al(\lam)$ defined in \eqref{eq.2d1} is expressed as
\[\beta_\al(\lam) = \frac{1}{4\pi} \ln \frac{\lam}{-e_\al},
  \]
so one has
\[\beta_\al(\lam) > 0 \iff \lam > -e_\al.
  \]

The energy space (or the form domain) $H_{\al}^1(\R^2)$ associated with $-\Del_{\al}$ can be characterized by general results on the Kre{\u\i}n--Vi\v{s}ik--Birman extension theory (see e.g.~\cite{MOS-2018FP}), and it is given explicitly by
\begin{equation}\label{eq:energyspace}
  H_{\al}^1(\R^2) =
  \left\{\begin{array}{@{}c l@{}}
  \{f+cG_\lam\colon\, f\in H^1(\R^2),\ c\in\mb{C}\} & \text{if } \al \in \R,
\\[\jot]
  H^1(\R^2) & \text{if } \al = \infty.
  \end{array}\right.
\end{equation}
Note that $H_\al^1(\R^2)$ is independent of the choice of $\al\in\R$ and $\lam>0$ from \eqref{eq:GlamGmu}. For $\lam>-e_\al$, one can define the maximal extension of the form
\[\langle ( -\Del_{\al} + \lam ) g, g \rangle = \| \nabla f \|_{L^2}^2 + \lam \| f \|_{L^2}^2 + \frac{|f(0)|^2}{ \beta_\al (\lam) }
  \]
with $g = f + f(0) \beta_\al (\lam)^{-1} G_\lam \in D (-\Del_{\al})$. This extension defines a positive quadratic form well-defined on $g\in H_{\al}^1(\R^2)$ explicitly given by
\begin{equation}\label{eq:qf}
  \langle (-\Del_{\al} + \lam) g, g \rangle = \| \nabla f \|_{L^2}^2 + \lam \|f\|_{L^2}^2 + \beta_\al(\lam) |c|^2
\end{equation}
for $g=f+cG_\lam\in H_\al^1(\R^2)$. By using the relation
\[\lam\|g\|_{L^2}^2 = \lam\| f + cG_\lam \|_{L^2}^2 = \lam\|f\|_{L^2}^2 + 2 \lam \Re [ c(f,G_\lam)_{L^2} ] + \frac{|c|^2}{4\pi},
  \]
we can rewrite \eqref{eq:qf} as
\[\langle -\Del_{\al} g, g \rangle = \| \nabla f \|_{L^2}^2 - 2 \lam \Re [ c(f,G_\lam)_{L^2} ] + \left( \beta_\al(\lam) - \frac{1}{4\pi} \right)|c|^2.
  \]

We denote the $H_\al^1$-norm depending on the parameter $\lam>-e_\al$ by
\begin{equation} \label{nn21}
  \| g \|_{H_{\al,\lam}^1} \ce \sqrt{ \langle (-\Del_\al+\lam) g, g \rangle }
\end{equation}
for $g\in H_{\al}^1(\R^2)$. The following equivalence holds:
\[\| g \|_{H_{\al,\lam}^1} \simeq \| g \|_{H_{\al,\mu}^1}, \quad \lam, \mu > -e_\al.
  \]
As a special case, we choose $\lam = 1 - e_\al$ and also use the notation
\[\| g \|_{H_\al^1} \ce \| g \|_{H_{\al,1-e_\al}^1}.
  \]

To study the stability properties of the standing waves, we need the local well-posedness in the energy spaces $ H^1_\al(\R^2)$.
We have the following statement (see Appendix~\ref{sec:B} for the proof).
\begin{proposition}\label{Prop:LWP}
Let $\al\in\R$ and $p>1$. For each $u_0\in H_\al^1(\R^2)$, there exists the unique maximal solution
\[u \in C \bigl( (-T_{\min}, T_{\max}), H_\al^1(\R^2) \bigr) \cap C^1 \bigl( (-T_{\min}, T_{\max}), H_\al^{-1}(\R^2) \bigr)
  \]
of \eqref{NLS} with the initial data $u(0)=u_0$, where $H_\al^{-1}(\R^2)$ is the dual space of $H_\al^1(\R^2)$. Moreover, $u$ satisfies the conservation laws of energy and the $L^2$-norm:
\begin{align*}
  &E(u(t)) = E(u_0),&
  &\|u(t)\|_{L^2} = \|u_0\|_{L^2}
\end{align*}
for all $t\in(-T_{\min},T_{\max})$, where the energy is defined by
\[E(v) \ce \frac{1}{2} \langle -\Del_\al v, v \rangle
  -\frac{1}{p+1} \| v \|_{ L^{p+1} }^{p+1}, \quad v \in H_\al^1 (\R^2). \]
\end{proposition}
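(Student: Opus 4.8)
The plan is to reduce the problem to a fixed-point argument in the energy space by treating \eqref{NLS} in Duhamel form
\[
u(t) = e^{it\Del_\al} u_0 + i \int_0^t e^{i(t-s)\Del_\al} \bigl( |u(s)|^{p-1} u(s) \bigr) \, ds,
\]
where $(e^{it\Del_\al})_{t\in\R}$ is the unitary group generated by $-\Del_\al$ on $L^2(\R^2)$, which restricts to a bounded group on $H^1_\al(\R^2)$ and extends to $H^{-1}_\al(\R^2)$ by duality. First I would recall the Strichartz estimates for $e^{it\Del_\al}$ established in \cite{CMY19, Y21}: for admissible pairs $(q,r)$ one has $\|e^{it\Del_\al}\phi\|_{L^q_t L^r_x} \lesssim \|\phi\|_{L^2}$ together with the inhomogeneous (Christ--Kiselev) counterpart, and — crucially for the energy-space theory — the commutation of $e^{it\Del_\al}$ with the operator $(-\Del_\al + \lam)^{1/2}$ (which defines the $H^1_{\al,\lam}$ norm via \eqref{nn21}), so that Strichartz estimates hold at the $H^1_\al$ regularity level as well. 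Equivalently, one works with the equivalent norm $\|(-\Del_\al+\lam)^{1/2}\,\cdot\,\|_{L^2}$ and applies the $L^2$-Strichartz estimates to $(-\Del_\al+\lam)^{1/2}u$.

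Next I would set up the contraction. Fix $u_0 \in H^1_\al(\R^2)$ and choose $\lam = 1 - e_\al$. On a ball in the space
\[
X_T \ce C\bigl([-T,T], H^1_\al(\R^2)\bigr) \cap L^{q_1}\bigl([-T,T], W^{1,r_1}_\al\bigr)
\]
for a suitable admissible pair $(q_1,r_1)$, define the Duhamel map $\Phi$. The key nonlinear estimate is that $v \mapsto |v|^{p-1}v$ maps $H^1_\al$-functions to $H^{-1}_\al$-type data controllably; here one uses the embedding $H^1_\al(\R^2) \hookrightarrow L^q(\R^2)$ for all $q < \infty$ (which follows from the splitting $g = f + cG_\lam$ with $f \in H^1(\R^2)$ and the fact that $G_\lam \in L^q$ for all $q<\infty$ by \eqref{eq:Gexpdecay}), combined with fractional Leibniz/chain-rule estimates for $(-\Del_\al + \lam)^{s/2}$ with $s$ slightly below $1$ applied to the regular part $f$, while the singular part is handled directly since $G_\lam$ is explicit and smooth away from the origin. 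Choosing $T$ small depending on $\|u_0\|_{H^1_\al}$, $\Phi$ is a contraction; this yields local existence and uniqueness, and blow-up alternative / maximal interval follows by the standard continuation argument. The $C^1$-in-time regularity with values in $H^{-1}_\al$ follows from the equation itself, reading $i\pt_t u = -\Del_\al u - |u|^{p-1}u$ with the right-hand side in $C([-T,T], H^{-1}_\al)$.

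Finally, for the conservation laws I would first establish them for $H^2_\al = D(-\Del_\al)$-data, where the solution is strong and the formal computations $\frac{d}{dt}\|u\|_{L^2}^2 = 0$ and $\frac{d}{dt}E(u) = 0$ are rigorous (multiply the equation by $\bar u$, resp.\ by $\pt_t \bar u$, integrate, and take real/imaginary parts, using self-adjointness of $-\Del_\al$). Then one passes to general $u_0 \in H^1_\al$ by approximating $u_0$ by $H^2_\al$-data and using continuous dependence of the flow on the initial data in $H^1_\al$ together with the continuity of $E$ and of the $L^2$-norm on $H^1_\al$; continuous dependence itself comes from the contraction estimates. I expect the main obstacle to be the nonlinear estimate controlling $|v|^{p-1}v$ at the $H^1_\al$ level: because elements of $H^1_\al$ are only of the form $f + cG_\lam$ with $G_\lam \notin H^1$, one cannot apply the usual fractional Leibniz rule directly to $v$, and the argument must carefully separate the regular and singular parts — estimating $|v|^{p-1}v$ via the decomposition and controlling cross terms like $|f|^{p-1} G_\lam$ using the good $L^q$ integrability of $G_\lam$ — while keeping track of the logarithmic singularity of $G_\lam$ at the origin coming from \eqref{eq:Gexpdecay}.
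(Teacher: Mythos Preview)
Your proposal follows the standard contraction-mapping route, but this is precisely the approach the paper \emph{avoids}, and for a concrete reason it states explicitly: there is, to the authors' knowledge, no workable theory of perturbed Sobolev spaces $H^{s,p}_\al$ or $W^{1,r}_\al$ for $-\Del_\al$ in two dimensions. Your scheme requires such spaces (you introduce $W^{1,r_1}_\al$ in the definition of $X_T$) together with Strichartz estimates at the $H^1_\al$ level and fractional Leibniz/chain rules for $(-\Del_\al+\lam)^{s/2}$ acting on products. None of these are available in the literature, and you do not supply them. The difficulty you flag at the end --- controlling $|v|^{p-1}v$ at the $H^1_\al$ level when $v=f+cG_\lam$ with $G_\lam\notin H^1$ --- is not a technicality to be ``carefully separated'' but the core obstruction: the singular part produces terms like $|G_\lam|^{p-1}G_\lam$ whose image under $(-\Del_\al+\lam)^{1/2}$ you have no way to estimate, and the cross terms are no better. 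As written, the contraction step does not close.

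The paper instead proceeds in two independent steps. For \emph{existence}, it invokes the abstract compactness framework of Okazawa--Suzuki--Yokota \cite{OSY12}: one checks five structural hypotheses (G1)--(G5) on the nonlinearity $g(v)=e_\al v-|v|^{p-1}v$ relative to the nonnegative self-adjoint operator $S=-\Del_\al-e_\al$, all of which reduce to soft estimates using only the Sobolev embedding $H^1_\al\hookrightarrow L^q$ (Lemma~\ref{lem:Sobemb}) and local compactness; this yields a weak solution with $L^2$-conservation and energy \emph{inequality}. For \emph{uniqueness}, the paper uses only the $L^2$-level Strichartz estimates of \cite{CMY19} (no $H^1_\al$-Strichartz is needed), estimating the difference of two solutions in $L^r_t L^{p+1}_x$ via the inhomogeneous estimate and the a~priori $L^\infty_t H^1_\al$ bound. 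Once uniqueness is in hand, \cite[Theorem~2.3]{OSY12} upgrades the weak solution to a strong one with full energy conservation and $C_t H^1_\al \cap C^1_t H^{-1}_\al$ regularity. The point is that existence and uniqueness are decoupled, so neither step requires nonlinear estimates in exotic $-\Del_\al$-adapted function spaces.
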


Now let us consider standing wave solutions with the form
\begin{equation}
  u(t,x) = e^{i \om t} \phi(x),
\end{equation}
where $\phi\in H_\al^1(\R^2)$ is a nontrivial solution of the stationary equation
\begin{equation}\label{eq:sp}
  -\Del_\al\phi + \om\phi - |\phi|^{p-1} \phi = 0, \quad x \in \R^2.
\end{equation}

Equation~\eqref{eq:sp} can be rewritten as $S_{\om}'(\phi)=0$, where $S_\om$ is the action functional defined by
\begin{equation}\label{eq:action}
  S_{\om} (v) = S_{\al,\om} (v) \ce \frac12\| v \|_{H_{\al,\om}^1}^2 - \frac1{p+1} \| v \|_{ L^{p+1} }^{p+1}, \quad v \in H_\al^1 (\R^2).
\end{equation}
We denote the set of all nontrivial solution of \eqref{eq:sp} by
\begin{align*}
  \mc{A}_{\om} = \mc{A}_{\al, \om} &\ce \{ \phi \in H_\al^1 (\R^2) \colon\, \phi \ne 0,\ S_{\al,\om}' (\phi) = 0 \}
\end{align*}
and the set of all ground states (minimal action solution) by
\begin{align*}
  \mc{G}_{\om} = \mc{G}_{\al, \om} & \ce \{ \phi \in \mc{A}_{\al,\om} \colon\, S_{\al,\om} (\phi) \le S_{\al,\om} (\psi) \text{ for all }\psi \in \ml{A}_{\al,\om} \}.
\end{align*}

Now we state our main results. First, we state the results about the existence, symmetry, and uniqueness of ground states.
\begin{theorem}\label{thm:exisG}
If $\om>-e_\al$, then the set $\ml{G}_{\om}$ is not empty.
\end{theorem}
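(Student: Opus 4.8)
The plan is to realize a ground state as a constrained minimizer. Set
\[
  \mc I_\om \ce \inf\Bigl\{\, \|v\|_{H_{\al,\om}^1}^2 : v\in H_\al^1(\R^2),\ \|v\|_{L^{p+1}}^{p+1}=1 \,\Bigr\}.
\]
Since $\om>-e_\al$, the form $\|\cdot\|_{H_{\al,\om}^1}^2$ is equivalent to $\|\cdot\|_{H_\al^1}^2$, and together with the embedding $H_\al^1(\R^2)\hookrightarrow L^{p+1}(\R^2)$ this gives $\mc I_\om>0$. Granting that $\mc I_\om$ is attained at some $v_0$, the Lagrange multiplier rule yields $-\Del_\al v_0+\om v_0=\mc I_\om|v_0|^{p-1}v_0$ (the multiplier is $\mc I_\om$ after pairing with $v_0$), so that $\phi\ce\mc I_\om^{1/(p-1)}v_0$ solves \eqref{eq:sp}, i.e.\ $\phi\in\mc A_\om$. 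Finally, for every $\psi\in\mc A_\om$ the Nehari identity $\|\psi\|_{H_{\al,\om}^1}^2=\|\psi\|_{L^{p+1}}^{p+1}$ gives $S_\om(\psi)=\tfrac{p-1}{2(p+1)}\|\psi\|_{H_{\al,\om}^1}^2$ and $\|\psi\|_{L^{p+1}}^{p-1}\ge\mc I_\om$, so $S_\om(\psi)\ge\tfrac{p-1}{2(p+1)}\mc I_\om^{(p+1)/(p-1)}=S_\om(\phi)$; hence $\phi\in\mc G_\om$. Thus the theorem reduces to the attainment of $\mc I_\om$.

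The crucial ingredient is the strict inequality $\mc I_\om<\mc I_\om^\infty$, where
\[
  \mc I_\om^\infty \ce \inf\Bigl\{\, \|\nabla v\|_{L^2}^2+\om\|v\|_{L^2}^2 : v\in H^1(\R^2),\ \|v\|_{L^{p+1}}^{p+1}=1 \,\Bigr\}
\]
is the corresponding quantity for the free equation. Note that $\om>-e_\al>0$, so $\mc I_\om^\infty$ is a genuine positive infimum and, by the classical Berestycki--Lions theory in $\R^2$ (valid for all $p>1$), it is attained by a positive $Q_\om\in H^1(\R^2)$ with $\|Q_\om\|_{L^{p+1}}^{p+1}=1$. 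Since $H^1(\R^2)\subset H_\al^1(\R^2)$ and, by \eqref{eq:qf}, the two quadratic forms agree on $H^1(\R^2)$, one gets $\mc I_\om\le\mc I_\om^\infty$ at once. For the strict inequality I would test with $w_\ve\ce Q_\om+\ve G_\om\in H_\al^1(\R^2)$: by \eqref{eq:qf} the $G_\om$-perturbation contributes no linear term to the form, $\|w_\ve\|_{H_{\al,\om}^1}^2=\mc I_\om^\infty+\beta_\al(\om)\ve^2$, while $\|w_\ve\|_{L^{p+1}}^{p+1}=1+(p+1)\ve\int_{\R^2}Q_\om^p G_\om\,dx+O(\ve^2)$ with $\int_{\R^2}Q_\om^p G_\om\,dx>0$ (both factors positive). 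Hence $\|w_\ve\|_{H_{\al,\om}^1}^2/\|w_\ve\|_{L^{p+1}}^2=\mc I_\om^\infty(1-c\ve)+O(\ve^2)$ for some $c>0$, which is $<\mc I_\om^\infty$ for small $\ve>0$; normalizing $w_\ve$ to the constraint gives $\mc I_\om<\mc I_\om^\infty$.

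For the attainment I would run a concentration-compactness argument that exploits the structure \eqref{eq:energyspace}. Let $(v_n)$ be a minimizing sequence for $\mc I_\om$; it is bounded in $H_\al^1(\R^2)$, so writing $v_n=f_n+c_n G_\om$ with $f_n\in H^1(\R^2)$, $c_n\in\mb C$ (a unique decomposition since $G_\om\notin H^1$), the sequences $(f_n)$, $(c_n)$ are bounded; passing to a subsequence, $f_n\wto f_0$ in $H^1(\R^2)$, $f_n\to f_0$ a.e., $c_n\to c_0$, and $v_n\to v_0\ce f_0+c_0 G_\om$ a.e. The Brezis--Lieb lemma applied to $(v_n)$ and the orthogonality of weak limits in the Hilbert space $\bigl(H^1(\R^2),\ \|\nabla\cdot\|_{L^2}^2+\om\|\cdot\|_{L^2}^2\bigr)$, combined with \eqref{eq:qf} and $c_n\to c_0$, yield
\[
  1=\|v_0\|_{L^{p+1}}^{p+1}+\delta
\]
and
\[
  \mc I_\om=\|v_0\|_{H_{\al,\om}^1}^2+\lim_n\bigl( \|\nabla(f_n-f_0)\|_{L^2}^2+\om\|f_n-f_0\|_{L^2}^2 \bigr),
\]
where $\delta\ce\lim_n\|f_n-f_0\|_{L^{p+1}}^{p+1}\in[0,1]$. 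Bounding the first summand on the right by $\mc I_\om\|v_0\|_{L^{p+1}}^2=\mc I_\om(1-\delta)^{2/(p+1)}$ and the limit by $\mc I_\om^\infty\delta^{2/(p+1)}$ (definitions of $\mc I_\om$, $\mc I_\om^\infty$), and using that $s\mapsto s^{2/(p+1)}$ is concave with $0^{2/(p+1)}=0$, hence $(1-\delta)^{2/(p+1)}+\delta^{2/(p+1)}\ge1$, we obtain
\[
  \mc I_\om\ge\mc I_\om(1-\delta)^{2/(p+1)}+\mc I_\om^\infty\delta^{2/(p+1)}>\mc I_\om
\]
whenever $\delta>0$, the strict step using $\mc I_\om^\infty>\mc I_\om$. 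This contradiction forces $\delta=0$, so $v_n\to v_0$ in $L^{p+1}(\R^2)$; then $v_0$ meets the constraint and, by weak lower semicontinuity of the form, $\|v_0\|_{H_{\al,\om}^1}^2\le\mc I_\om$. Hence $v_0$ attains $\mc I_\om$ and, by the first step, $\mc G_\om\ne\emptyset$.

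The main obstacle is the loss of compactness on the unbounded domain $\R^2$: a minimizing sequence could split off a bubble escaping to spatial infinity, where the point interaction is invisible and the problem degenerates to the free NLS. This is precisely what the strict inequality $\mc I_\om<\mc I_\om^\infty$ excludes, and establishing that inequality --- quantifying that the attractive point interaction strictly lowers the minimal level --- is the heart of the argument; the Brezis--Lieb bookkeeping in the presence of the singular profile $G_\om$ and the Lagrange-multiplier reduction to \eqref{eq:sp} are then routine.
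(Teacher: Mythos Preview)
Your argument is correct and follows the same overall strategy as the paper---a variational characterization, a strict comparison with the free problem, and a Brezis--Lieb splitting to rule out loss of compactness---but the implementations differ in two notable ways. First, the paper works on the Nehari manifold $\{K_\om=0\}$ and minimizes $S_\om$ there, whereas you minimize $\|\cdot\|_{H^1_{\al,\om}}^2$ under the constraint $\|\cdot\|_{L^{p+1}}^{p+1}=1$; these are equivalent by scaling, so this is cosmetic. Second, and more substantively, the proofs of the strict inequality are genuinely different: you lower the level constructively by the test family $Q_\om+\ve G_\om$, exploiting that in \eqref{eq:qf} the singular part contributes only at order $\ve^2$ to the quadratic form while it contributes at order $\ve$ to $\|\cdot\|_{L^{p+1}}^{p+1}$; the paper instead argues by contradiction---if $d_\al(\om)=d_\infty(\om)$ then the free ground state $\phi_{\infty,\om}$ would lie in $\mc M_{\al,\om}$, hence solve \eqref{eq:sp}, hence belong to $D(-\Del_\al)\cap H^2(\R^2)$, which by the domain description forces $\phi_{\infty,\om}(0)=0$, contradicting positivity. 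Your test-function route is more quantitative and generalizes readily; the paper's route is shorter and leans on the specific structure of $D(-\Del_\al)$. In the compactness step the paper first shows $c_0\ne0$ and then concludes via the Nehari-level Lemma~\ref{lem:knega}, while you run a direct subadditivity/dichotomy estimate; both are standard and equally effective here.
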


\begin{theorem}
\label{thm:symmetryGS}
Let $\om>-e_\al$ and $\phi\in\ml{G}_\om$. If $\phi$ is decomposed as $\phi=f+f(0)\beta_\al(\om)^{-1}G_\om$ with $f\in H^2(\R^2)$, then there exists $\theta\in\R$ such that $e^{i\theta}f$ is positive, radial, and decreasing function. In particular, the function $e^{i\theta}\phi$ is also positive, radial, and decreasing.
\end{theorem}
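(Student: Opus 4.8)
The plan is to combine the variational characterisation of ground states with a symmetric‑decreasing rearrangement applied \emph{only to the regular part} of $\phi$. By the Nehari‑type characterisation underlying Theorem~\ref{thm:exisG} (a ground state is a critical point lying on the Nehari manifold with least action, hence minimises the scale‑invariant quotient), $\phi$ minimises
\[
  \mc J(g,d) \ce \frac{\|\nabla g\|_{L^2}^2 + \om\|g\|_{L^2}^2 + \beta_\al(\om)\,|d|^2}{\|g + d\,G_\om\|_{L^{p+1}}^2}
\]
over all $(g,d)\in H^1(\R^2)\times\mb C$ with $g+dG_\om\not\equiv 0$; here one uses the description \eqref{eq:energyspace} of $H^1_\al(\R^2)$ and the quadratic form \eqref{eq:qf}, the essential structural point being that on the form domain the charge $d$ is a \emph{free} parameter, not tied to $g(0)$. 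Since $\om>-e_\al$ one has $\beta_\al(\om)>0$, so $\mc J$ is well defined and bounded below by a positive constant. Writing $\phi=f+cG_\om$ with $f\in H^2(\R^2)$ and $c=f(0)\beta_\al(\om)^{-1}$ as in the statement, the pair $(f,c)$ is a minimiser of $\mc J$.

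\emph{Step 1: fixing the phase.} From $|\nabla|f||\le|\nabla f|$ a.e., $\||f|\|_{L^2}=\|f\|_{L^2}$, and the pointwise inequality $|f+cG_\om|\le|f|+|c|G_\om$ (valid because $G_\om>0$), one obtains $\mc J(|f|,|c|)\le\mc J(f,c)=\min\mc J$, hence equality; integrating the pointwise inequality in $L^{p+1}$, this forces $|f(x)+cG_\om(x)|=|f(x)|+|c|G_\om(x)$ for a.e.\ $x$. A short perturbation argument rules out $c=0$: otherwise $(|f|,0)$ would minimise $\mc J$, contradicting $\mc J(|f|,\ve)<\mc J(|f|,0)$ for small $\ve>0$, which holds because $\int|f|^pG_\om>0$ makes the denominator increase to first order in $\ve$ while the numerator increases only to second order. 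With $c\ne0$ and $G_\om>0$, the pointwise triangle equality gives $f=e^{i\theta}|f|$ and $c=e^{i\theta}|c|$ with $\theta=\arg c$. Replacing $\phi$ by $e^{-i\theta}\phi$, I may assume $f\ge0$ and $c>0$, and it remains to prove that $f$ is radial and decreasing.

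\emph{Step 2: symmetrisation and strict monotonicity.} Let $f^{*}$ be the symmetric‑decreasing rearrangement of $f$, keeping $c$ \emph{frozen}. P\'olya--Szeg\H{o} gives $\|\nabla f^{*}\|_{L^2}\le\|\nabla f\|_{L^2}$ and $\|f^{*}\|_{L^2}=\|f\|_{L^2}$, so the numerator of $\mc J$ does not increase. Since $G_\om$ is positive, radial and strictly decreasing (see \eqref{eq:defGlambda}) it equals its own rearrangement, and $(s,t)\mapsto(s+t)^{p+1}$ is supermodular on $[0,\infty)^2$; the corresponding Hardy--Littlewood (layer‑cake) rearrangement inequality then yields $\int(f+cG_\om)^{p+1}\le\int(f^{*}+cG_\om)^{p+1}$, so the denominator does not decrease. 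Hence $\mc J(f^{*},c)\le\mc J(f,c)=\min\mc J$, forcing equality; the equality case — the identity $|\{f>s\}\cap\{G_\om>t\}|=\min(|\{f>s\}|,|\{G_\om>t\}|)$ for a.e.\ $s,t$, together with the fact that the superlevel sets $\{G_\om>t\}$ are balls centred at the origin — forces every superlevel set $\{f>s\}$ to be a ball centred at the origin, i.e.\ $f=f^{*}$. Finally, by \eqref{eq:opaction}, $f$ solves $-\Del f+\om f=\phi^{p}$ in $\R^2$ with $\phi=f+cG_\om>0$, $f\in H^2(\R^2)$ and $f\to0$ at infinity, so the strong maximum principle gives $f>0$; moreover $f$ cannot be constant on any annulus, since there $\Del f=0$ would force $\phi$ to be constant, contradicting that $\phi=f+cG_\om$ with $c>0$ and $G_\om$ strictly decreasing. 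Therefore $f$, and hence $\phi=f+cG_\om$, is positive, radial and strictly decreasing, which is the asserted property of $e^{-i\theta}f$ and $e^{-i\theta}\phi$.

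\emph{Main obstacle.} The delicate point is Step 2: one cannot symmetrise $\phi$ directly, since its $G_\om$‑component lies only in $H^{1-\ve}(\R^2)$, so $\|\nabla\phi\|_{L^2}=\infty$; the remedy is to rearrange the regular part $f$ with the charge $c$ held fixed, which is legitimate precisely because $H^1_\al(\R^2)$ is parametrised by independent pairs $(g,d)$ via \eqref{eq:energyspace}, and then $cG_\om$ plays the role of a fixed, already‑symmetric weight in the Hardy--Littlewood inequality. The accompanying subtlety, also in Step 2, is the equality analysis: since the problem lacks translation invariance, one must show that the superlevel sets of $f$ are balls centred \emph{at the origin}, and this is exactly where the strict radial monotonicity of $G_\om$ enters; disposing of the degenerate case $c=0$ in Step 1 is a further minor point.
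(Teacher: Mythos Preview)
Your argument is correct and shares the paper's central idea: symmetrise only the regular part $f$ while keeping the charge $c$ fixed, exploiting that $G_\om$ is already radially decreasing. The execution, however, differs in two places. For the phase (Step~1), you use the pointwise equality case of the triangle inequality $|f+cG_\om|=|f|+|c|G_\om$ to force $f=e^{i\theta}|f|$ in one stroke; the paper instead passes through the competitor $|\Re f|+i|\Im f|+f(0)\beta_\al(\om)^{-1}G_\om$, shows it is again a ground state, and then applies the strong maximum principle separately to the real and imaginary parts of the resulting elliptic equation. For radial symmetry (Step~2), you read off $f=f^*$ from the \emph{denominator} equality, using the layer-cake identity for the supermodular integrand $(s+t)^{p+1}$ together with the fact that the superlevel sets of $G_\om$ exhaust all balls centred at the origin; the paper instead uses the \emph{numerator} equality $\|\nabla f^*\|_{L^2}=\|\nabla f\|_{L^2}$ and appeals to the Brothers--Ziemer theorem after first checking that $f^*$ is strictly decreasing. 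Your route is more self-contained (no external equality-case result needed) and the phase argument is cleaner; the paper's route is more modular and makes the role of the elliptic equation in the positivity step explicit. Your perturbation argument ruling out $c=0$ is also different from the paper's, which derives $c\neq 0$ earlier from the strict inequality $d_\al(\om)<d_\infty(\om)$.
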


\begin{theorem} \label{thm:uniqG}
There exists $\om_1 > -e_\al$ such that if $\om > \om_1$, then there exists the unique positive radial ground state $\phi_\om \in \ml{G}_\om$ such that the set of all ground states is characterized as
\begin{equation}\label{eq:uniqG}
  \ml{G}_{\om} = \{ e^{i \theta} \phi_\om \colon\, \theta \in \R \}.
\end{equation}
\end{theorem}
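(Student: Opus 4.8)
The plan is to exploit that a large frequency $\om$ turns the point interaction into a singular but \emph{small} perturbation: after rescaling \eqref{eq:sp} to frequency one, the equation degenerates to $-\Del Q+Q=Q^p$ on $\R^2$, whose positive radial solution $Q$ is unique (Kwong) and radially nondegenerate (Weinstein, Kwong), and one then localizes around $Q$ by an implicit function argument in the real radial class. Concretely, let $\phi=\phi_\om\in\ml G_\om$ be a positive radial ground state (which exists by Theorems~\ref{thm:exisG} and~\ref{thm:symmetryGS}), write $\phi=f+f(0)\beta_\al(\om)^{-1}G_\om$ with $f\in H^2(\R^2)$, so that by \eqref{eq:opaction} one has $(-\Del+\om)f=|\phi|^{p-1}\phi$, and set $\phi(x)=\om^{1/(p-1)}\til\phi(\sqrt\om\,x)$, $f(x)=\om^{1/(p-1)}\til f(\sqrt\om\,x)$. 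Using $G_\om(x)=G_1(\sqrt\om\,x)$ from \eqref{eq:defGlambda}, this turns \eqref{eq:sp} into the fixed-frequency system
\[
(-\Del+1)\til f=\bigl|\,\til f+s\,\til f(0)\,G_1\,\bigr|^{p-1}\bigl(\til f+s\,\til f(0)\,G_1\bigr),\qquad \til\phi=\til f+s\,\til f(0)\,G_1,\qquad s\ce\beta_\al(\om)^{-1},
\]
and since $\beta_\al(\om)=\frac1{4\pi}\ln(\om/(-e_\al))\to+\infty$ as $\om\to+\infty$, the coefficient $s\to0$ and at $s=0$ the system becomes $(-\Del+1)\til f=|\til f|^{p-1}\til f$.

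Next I would show that every rescaled positive radial ground state lies, for $\om$ large, in a fixed $H^2(\R^2;\R)$-neighborhood of $Q$ in the radial class. Comparing with the rescaled trial function $\om^{1/(p-1)}Q(\sqrt\om\,\cdot)\in H_\al^1(\R^2)$ together with the Nehari/mountain-pass characterization of the ground-state level gives $\om^{-2/(p-1)}S_{\al,\om}(\phi_\om)\to S_\infty(Q)$, where $S_\infty(v)\ce\frac12(\lv\nabla v\rv_{L^2}^2+\lv v\rv_{L^2}^2)-\frac1{p+1}\lv v\rv_{L^{p+1}}^{p+1}$; the Nehari identity then yields a uniform bound on $\lv\til f\rv_{H^1}$, and, because $s\,\til f(0)\,G_1\to0$ in every $L^q(\R^2)$ with $q<\infty$, a lower bound $\lv\til\phi\rv_{L^{p+1}}\ge c>0$ ruling out vanishing. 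Along any sequence $\om_n\to+\infty$, the compact embedding $H^1_{\mathrm{rad}}(\R^2)\hookrightarrow L^q(\R^2)$ ($2<q<\infty$), elliptic regularity for $(-\Del+1)\til f=|\til\phi|^{p-1}\til\phi$, a bootstrap, and weak lower semicontinuity upgrade this to strong convergence $\til f\to\til f_\infty$, with $\til f_\infty$ a nonzero nonnegative radial $H^1$-solution of $(-\Del+1)\til f_\infty=\til f_\infty^p$; the maximum principle gives $\til f_\infty>0$, and Kwong's theorem forces $\til f_\infty=Q$.

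Finally I would close the argument by the implicit function theorem applied to
\[
\Phi(g,s)\ce g-(-\Del+1)^{-1}\Bigl[\,|g+s\,g(0)\,G_1|^{p-1}\bigl(g+s\,g(0)\,G_1\bigr)\,\Bigr],\qquad \Phi\colon H^2_{\mathrm{rad}}(\R^2;\R)\times\R\to L^2_{\mathrm{rad}}(\R^2;\R),
\]
which is well defined and $C^1$ near $(Q,0)$ because $g\mapsto g(0)$ is bounded on $H^2(\R^2)$, $G_1\in L^q(\R^2)$ for every $q<\infty$, and $(-\Del+1)^{-1}$ gains two derivatives. One has $\Phi(Q,0)=0$ and $D_g\Phi(Q,0)=(-\Del+1)^{-1}L_+$ with $L_+\ce-\Del+1-pQ^{p-1}$, which is an isomorphism on the real radial class since $\ker L_+=\mathrm{span}\{\pt_{x_1}Q,\pt_{x_2}Q\}$ contains no radial function. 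The implicit function theorem then supplies $s_0>0$, a neighborhood $U\ni Q$ and a unique $C^1$ branch $s\mapsto g(s)\in U$ ($|s|<s_0$) of zeros of $\Phi$, together with local uniqueness of zeros in $U\times(-s_0,s_0)$. Choosing $\om_1>-e_\al$ so large that $\beta_\al(\om)^{-1}<s_0$ and the neighborhood from the previous step is contained in $U$ for all $\om>\om_1$, we conclude that the rescaled positive radial ground state is unique for such $\om$; hence $\phi_\om$ is unique, and \eqref{eq:uniqG} follows from Theorem~\ref{thm:symmetryGS} together with the phase- and reflection-invariance of $S_{\al,\om}$.

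The step I expect to be the main obstacle is the uniform profile control. Because $G_1\notin H^1(\R^2)$ and $G_1$ has a logarithmic singularity at the origin (see \eqref{eq:Gexpdecay}), the ground state $\til\phi$ itself is not in $H^1(\R^2)$, so the a priori estimates, the Nehari identity and the compactness argument must all be carried out on the regular part $\til f$ while tracking the singular correction $s\,\til f(0)\,G_1$ separately; a related technical nuisance is verifying that the Nemytskii-type map $g\mapsto|g+s\,g(0)G_1|^{p-1}(g+s\,g(0)G_1)$ is genuinely $C^1$ on the relevant Lebesgue/Sobolev scale, which for $1<p<2$ requires care and uses the positivity of $\til\phi$ away from the origin together with the explicit local behaviour of $G_1$.
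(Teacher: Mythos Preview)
Your strategy matches the paper's: rescale to frequency one, show that every rescaled positive radial ground state converges in $H^2$ to the Kwong profile $Q=\til\phi_\infty$ (this is exactly Proposition~\ref{lem:4.1}), and then use radial nondegeneracy of $L_+=-\Del+1-pQ^{p-1}$ to conclude local uniqueness. The place where you diverge is the uniqueness mechanism. You apply the implicit function theorem to $\Phi(g,s)$ at the limiting point $(Q,0)$, obtaining a unique $C^1$ branch and local uniqueness of zeros; the paper instead fixes $\om$ and compares two candidate solutions $\til\phi_\om,\til\psi$ directly via the \emph{secant} operator $\til L_\om^* v=(-\Del_{\til\al}+1)v-V_\om v$ with $V_\om=(\til\phi_\om^p-|\til\psi|^{p-1}\til\psi)/(\til\phi_\om-\til\psi)$, and proves an injectivity estimate $\|f\|_{H^2}\lesssim\|\til L_\om^* v\|_{L^2}$ by perturbing from $\til L_\infty$ (Lemma~\ref{lem:uniqueGi}). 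Both routes hinge on the same fact, the invertibility of $\til L_\infty$ on $H^2_{\mathrm{rad}}$, but yours packages it once at $s=0$ and reads off uniqueness for all small $s$, while the paper reproves an $\om$-dependent bound that is later reused quantitatively (the analogous IFT argument appears in the paper only in Section~\ref{sec:8}, to get $C^1$ regularity of $\om\mapsto\phi_\om$). Two small points: your map $\Phi$ should take values in $H^2_{\mathrm{rad}}(\R^2;\R)$ rather than $L^2_{\mathrm{rad}}$, since $(-\Del+1)^{-1}$ lands you there and that is where $D_g\Phi(Q,0)=(-\Del+1)^{-1}L_+$ is an isomorphism; and your worry about $C^1$-smoothness of the Nemytskii map for $1<p<2$ is handled, as in the paper, by working in $H^2_{\mathrm{rad}}$ and using that $G_1\in L^q$ for all $q<\infty$, so no positivity argument is needed there.
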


Next, we state the results about orbital stability and instability of standing waves. The definition of orbital stability is as follows.

\begin{definition}
Let $\om\in\R$ and let $\phi\in H_\al^1(\R^2)$ be a nontrivial solution of \eqref{eq:sp}. The standing wave $e^{i\om t}\phi$ is \emph{stable} if for any $\ve>0$ there exists $\delta>0$ such that for any $u_0\in H_\al^1(\R^2)$ satisfying $\|u_0-\phi\|_{H_\al^1}<\delta$, the solution $u(t)$ of \eqref{NLS} with $u(0)=u_0$ exists globally in time and satisfies
\[\sup_{t\in\R}\inf_{\theta\in\R}\|u(t)-e^{i\theta}\phi\|_{H_\al^1}
  <\ve. \]

Otherwise, the standing wave $e^{i\om t}\phi$ is \emph{unstable}.
\end{definition}

The following two statements are main results of this paper. The first one concerns the stability for $\om$ close to $-e_\al$.

\begin{theorem}\label{thm:stabsmall}
For each $\al\in\R$ and $p>1$, there exists $\om_* > -e_\al$ such that if $\om\in(-e_\al,\om_*)$ and $\phi\in\ml{G}_\om$, the standing wave $e^{i\om t}\phi$ is stable.
\end{theorem}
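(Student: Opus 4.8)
The plan is to apply the orbital stability theory of Grillakis--Shatah--Strauss. For $\om$ close to $-e_\al$ the ground state bifurcates, with vanishing amplitude, from the linear bound state $\chi_\al$; I will show that $\ml{G}_\om$ is then a single phase orbit $\{e^{i\theta}\phi_\om:\theta\in\R\}$ with $\om\mapsto\phi_\om$ of class $C^1$ into $H^1_\al$, that the Hessian $S_\om''(\phi_\om)$ has exactly one negative eigenvalue with kernel $\R\,i\phi_\om$, and that the Vakhitov--Kolokolov slope condition $\tfrac{d}{d\om}\|\phi_\om\|_{L^2}^2>0$ holds. These are exactly the hypotheses of the abstract stability theorem, and their combination yields stability; the threshold $\om_*$ is the largest frequency up to which all of the above remains valid. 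The reason the conclusion holds for every $p>1$ is that along the bifurcating branch $\|\phi_\om\|_{L^2}^2\sim c\,(\om+e_\al)^{2/(p-1)}$, whose exponent $2/(p-1)$ is positive regardless of the subcritical/critical/supercritical nature of the nonlinearity.

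First I would construct the branch. Writing $\om=-e_\al+\delta$ with $\delta>0$ small and rescaling $\phi=\epsilon v$, equation \eqref{eq:sp} becomes $(-\Del_\al-e_\al)v+\delta v=\epsilon^{p-1}|v|^{p-1}v$. Since $-\Del_\al-e_\al\ge 0$ has one-dimensional kernel $\R\chi_\al$ and is coercive on $\{\chi_\al\}^{\perp}$ (its spectrum there lies in $[-e_\al,\infty)$ with $-e_\al>0$), a Lyapunov--Schmidt reduction applies: the projection of the equation onto $\{\chi_\al\}^{\perp}$ is solved by the implicit function theorem, giving $v=\chi_\al+w(\epsilon,\delta)$ with $w\perp\chi_\al$ and $\|w\|_{H^1_\al}=O(\epsilon^{p-1})$, and the remaining scalar equation reads $\delta\,(1+O(\epsilon^{p-1}))=\epsilon^{p-1}(\|\chi_\al\|_{L^{p+1}}^{p+1}+O(\epsilon^{p-1}))$, which defines a $C^1$ curve $\om\mapsto\phi_\om=\epsilon(\om)v(\om)$ of positive radial solutions with $\phi_\om\to 0$ in $H^1_\al$ and, on setting $c_\om^{p-1}=\tfrac{\om+e_\al}{\|\chi_\al\|_{L^{p+1}}^{p+1}}(1+o(1))$, with $\phi_\om=c_\om\chi_\al+o(c_\om)$ and $\|\phi_\om\|_{L^2}^2=c_\om^2(1+o(1))$. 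To see that this exhausts $\ml{G}_\om$: for $\phi\in\ml{G}_\om$ the standard variational (mountain-pass) characterization of the ground-state level gives $S_\om(\phi)\le\max_{t>0}S_\om(t\chi_\al)\lesssim(\om+e_\al)^{(p+1)/(p-1)}\to 0$; together with the Nehari identity $\|\phi\|_{H^1_{\al,\om}}^2=\|\phi\|_{L^{p+1}}^{p+1}$ and $\delta\|\phi\|_{L^2}^2\le\|\phi\|_{H^1_{\al,\om}}^2$ this forces $\|\phi\|_{H^1_\al}\to 0$, so $\phi$ lies on the branch by the local uniqueness in the reduction; combining with Theorems \ref{thm:exisG} and \ref{thm:symmetryGS} gives $\ml{G}_\om=\{e^{i\theta}\phi_\om:\theta\in\R\}$.

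Next comes the spectral analysis. In real coordinates $S_\om''(\phi_\om)=L_+\oplus L_-$ with $L_\pm=-\Del_\al+\om-\mu_\pm\phi_\om^{p-1}$, $\mu_+=p$, $\mu_-=1$. Since $\phi_\om>0$ satisfies $L_-\phi_\om=0$, $L_-$ is nonnegative with simple kernel $\R\phi_\om$. For $L_+$: as $\om\to-e_\al$ the potential $\phi_\om^{p-1}\to 0$ in every $L^q$, $q<\infty$, so $L_+$ is a small, relatively compact perturbation of $-\Del_\al-e_\al$; its essential spectrum equals $[\om,\infty)$, which is contained in $(0,\infty)$ since $\om>-e_\al>0$, and first-order perturbation theory moves the eigenvalue emanating from $0$ to $[(\om+e_\al)-p\,c_\om^{p-1}\|\chi_\al\|_{L^{p+1}}^{p+1}](1+o(1))=(1-p)(\om+e_\al)(1+o(1))<0$, while the rest of its spectrum stays bounded below by a positive constant. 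Hence $L_+$ has exactly one negative eigenvalue $-\nu_\om$ (with unit eigenvector $\psi_\om$) and trivial kernel, so $S_\om''(\phi_\om)$ has exactly one negative eigenvalue and $\ker S_\om''(\phi_\om)=\R\,i\phi_\om$, the tangent to the phase orbit. For the slope condition, differentiating \eqref{eq:sp} in $\om$ gives $L_+\pt_\om\phi_\om=-\phi_\om$, so $\tfrac12\tfrac{d}{d\om}\|\phi_\om\|_{L^2}^2=-\langle L_+^{-1}\phi_\om,\phi_\om\rangle$. Splitting $\phi_\om$ into its $\psi_\om$-component and the orthogonal remainder, using $\langle\phi_\om,\psi_\om\rangle^2=c_\om^2(1+o(1))$, $\|\phi_\om-\langle\phi_\om,\psi_\om\rangle\psi_\om\|_{L^2}^2=o(c_\om^2)$, the uniform spectral gap on $\{\psi_\om\}^{\perp}$, and $\nu_\om=(p-1)(\om+e_\al)(1+o(1))\asymp c_\om^{p-1}$, the negative-direction term $\langle\phi_\om,\psi_\om\rangle^2/\nu_\om\asymp c_\om^{\,3-p}$ dominates the $o(c_\om^2)$ remainder precisely because $3-p<2$ for every $p>1$; therefore $-\langle L_+^{-1}\phi_\om,\phi_\om\rangle>0$.

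Finally, the curve $\om\mapsto\phi_\om$ is $C^1$ into $H^1_\al$, Proposition \ref{Prop:LWP} supplies local well-posedness and conservation of energy and mass, and we have verified that $S_\om''(\phi_\om)$ has exactly one negative eigenvalue, that $\ker S_\om''(\phi_\om)$ is the tangent space of the phase orbit, and that $\tfrac{d}{d\om}\|\phi_\om\|_{L^2}^2>0$; the Grillakis--Shatah--Strauss criterion then gives orbital stability of $e^{i\om t}\phi_\om$, hence, by the phase invariance of \eqref{NLS}, of $e^{i\om t}\psi$ for every $\psi\in\ml{G}_\om$. I expect the main difficulty to be making the Lyapunov--Schmidt reduction and the perturbative spectral analysis rigorous in the non-standard operator setting: the domain of $-\Del_\al$ carries the singular Green-function tails $G_\lam$, the quadratic form $\|\cdot\|_{H^1_{\al,\om}}$ degenerates exactly at $\om=-e_\al$, and one must control the interaction of $|u|^{p-1}u$ with these tails and check that multiplication by $\phi_\om^{p-1}$ is form-small and relatively compact uniformly as $\om\to-e_\al$. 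Pinning down the sign of the Vakhitov--Kolokolov quantity uniformly in $p>1$ is the other delicate point.
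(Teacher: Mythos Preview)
Your approach is broadly correct but takes a genuinely different route from the paper's. The paper does \emph{not} go through the full Grillakis--Shatah--Strauss machinery; instead it invokes Proposition~\ref{prop:coercivity}, namely the direct coercivity estimate
\[
\langle S_\om''(\phi)w,w\rangle\ge k\|w\|_{H^1_\al}^2\qquad\text{for all }w\in H^1_\al(\R^2)\text{ with }(\phi,w)_{L^2}=0,
\]
valid for every $\phi\in\ml G_\om$ once $\om$ is close enough to $-e_\al$, and then cites the standard fact that such coercivity on a single codimension-one subspace already implies orbital stability. The proof of the coercivity is referred to \cite{FO03S} and omitted. This route deliberately sidesteps three of the ingredients you set up: uniqueness of ground states near $-e_\al$, $C^1$ dependence of a branch $\om\mapsto\phi_\om$, and the Vakhitov--Kolokolov slope condition. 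Indeed, the paper explicitly remarks that it does not prove uniqueness for small frequencies because stability does not require it, and elsewhere that it does not discuss the spectral hypotheses of \cite{GSS87}.

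What each approach buys: the paper's coercivity route is shorter and applies verbatim to every $\phi\in\ml G_\om$ without first collapsing $\ml G_\om$ to a single orbit; it also avoids the implicit function theorem in the degenerate regime $\om\downarrow-e_\al$, where $\beta_\al(\om)\to 0$ and the form $\|\cdot\|_{H^1_{\al,\om}}$ loses control of the singular component. Your route, in exchange for the extra work (Lyapunov--Schmidt reduction, perturbative spectral analysis of $L_\pm$, slope computation), yields more information: local uniqueness of the bifurcating branch and the explicit asymptotics $\|\phi_\om\|_{L^2}^2\sim c\,(\om+e_\al)^{2/(p-1)}$. Your perturbative computation of the negative eigenvalue of $L_+$ and of the slope is the right heuristic, and your dominance argument $c_\om^{\,3-p}\gg o(c_\om^2)$ for all $p>1$ is sound. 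The genuine technical burdens you flag---form-smallness and relative compactness of $\phi_\om^{p-1}$ in the $H^1_\al$ setting with the $G_\lam$ tails, and the degeneration of the quadratic form---are real, and the paper's route was chosen precisely to avoid them.
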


The second one concerns the stability/instability for large frequency $\om$. We denote the unique ground state given in Theorem~\ref{thm:uniqG} by $\phi_{\om}$.

\begin{theorem}\label{thm:stablarge}
For each $\al\in\R$ and $p>1$, there exists $\om^*\in(\om_1,\infty)$ such that the following is true.
\begin{itemize}
\item If $1<p\le 3$, then the standing wave $e^{i\om t}\phi_\om$ is stable for all $\om>\om^*$.
\item If $p>3$, then the standing wave $e^{i\om t}\phi$ is unstable for all $\om>\om^*$.
\end{itemize}
\end{theorem}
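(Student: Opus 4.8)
The plan is to invoke the orbital stability/instability theory of Grillakis--Shatah--Strauss in its Vakhitov--Kolokolov (slope) form, and then to determine the slope by a scaling analysis as $\om\to\infty$, in which the rescaled ground state converges to the free two-dimensional ground state. For $\om>\om_1$ let $\phi_\om$ be the unique positive radial ground state of Theorem~\ref{thm:uniqG}, and put $L_{+,\om}\ce-\Del_\al+\om-p\phi_\om^{p-1}$, $L_{-,\om}\ce-\Del_\al+\om-\phi_\om^{p-1}$. Since $L_{-,\om}\phi_\om=0$ and $\phi_\om>0$, one gets $L_{-,\om}\ge0$ with $\ker L_{-,\om}=\R\phi_\om$; and because $\phi_\om$ minimizes the action on the Nehari manifold, the action Hessian at $\phi_\om$ has exactly one negative direction, so $L_{+,\om}$ has exactly one negative eigenvalue. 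Granting in addition the nondegeneracy $\ker L_{+,\om}=\{0\}$ and the $C^1$ dependence of $\phi_\om$ on $\om$ (both supplied, for $\om$ large, by the perturbation analysis below), the abstract theory applies — the gauge symmetry being the only one acting nontrivially on the radial $\phi_\om$ — and yields that $e^{i\om t}\phi_\om$ is stable if $d''(\om)>0$ and unstable if $d''(\om)<0$, where $d(\om)\ce S_\om(\phi_\om)$. As $d'(\om)=\tfrac12\|\phi_\om\|_{L^2}^2$, everything reduces to the sign of $\tfrac{d}{d\om}\|\phi_\om\|_{L^2}^2$, and one may take $\om^*$ to be any number past which this sign is constant.

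\emph{Scaling and the limit profile.} I would set $\psi_\om(x)\ce\om^{-1/(p-1)}\phi_\om(x/\sqrt\om)$. Using \eqref{eq:op_dom}--\eqref{eq:opaction} and the logarithmic behaviour of elements of $D(-\Del_\al)$ near the origin, one checks that $\psi_\om\in D(-\Del_{\til\al})$ with $\til\al\ce\al+\tfrac{1}{4\pi}\ln\om$, that $-\Del_{\til\al}\psi_\om+\psi_\om=|\psi_\om|^{p-1}\psi_\om$, and that $\|\phi_\om\|_{L^2}^2=\om^{(3-p)/(p-1)}\|\psi_\om\|_{L^2}^2$. Kre{\u\i}n's resolvent identity gives $(-\Del_{\til\al}+1)^{-1}=(-\Del+1)^{-1}+\kappa\,(G_1,\,\cdot\,)_{L^2}\,G_1$ with $\kappa=\kappa(\om)\ce\beta_\al(\om)^{-1}=4\pi\bigl(\ln\tfrac{\om}{-e_\al}\bigr)^{-1}>0$, so $\psi_\om$ solves $\psi_\om=(-\Del+1)^{-1}(|\psi_\om|^{p-1}\psi_\om)+\kappa\,(G_1,|\psi_\om|^{p-1}\psi_\om)_{L^2}\,G_1$, a problem depending on $\om$ only through $\kappa$. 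A concentration-compactness argument shows $\psi_\om\to Q$ in $H^1_\al(\R^2)$ as $\om\to\infty$ (i.e.\ $\kappa\to0^+$), where $Q>0$ is the unique radial ground state of $-\Del Q+Q=Q^p$ on $\R^2$; since the radial linearization $L_{+,Q}\ce-\Del+1-pQ^{p-1}$ is invertible, the implicit function theorem then identifies $\psi_\om$, for $\om$ large, with a smooth branch $\kappa\mapsto\psi_\om=g_\om+\kappa\,g_\om(0)\,G_1$, $g_\om=Q+\kappa h+O(\kappa^2)$ in $H^1_\al(\R^2)$, $L_{+,Q}h=pQ(0)Q^{p-1}G_1$. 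In particular $\om\mapsto\phi_\om$ is $C^1$ near infinity, $L_{+,\om}$ has the spectral structure used above for $\om$ large, and $\Phi(\kappa)\ce\|\psi_\om\|_{L^2}^2$ is $C^1$ near $\kappa=0$ with $\Phi(0)=\|Q\|_{L^2}^2$.

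\emph{Conclusion and the main obstacle.} If $p\ne3$ the scaling prefactor dominates: since $\kappa'(\om)=O(\om^{-1}(\ln\om)^{-2})$, differentiating $\|\phi_\om\|_{L^2}^2=\om^{(3-p)/(p-1)}\Phi(\kappa(\om))$ gives $\tfrac{d}{d\om}\|\phi_\om\|_{L^2}^2=\tfrac{3-p}{p-1}\om^{(3-p)/(p-1)-1}\bigl(\|Q\|_{L^2}^2+o(1)\bigr)$, which is $>0$ for $1<p<3$ and $<0$ for $p>3$ — the two asserted alternatives. In the $L^2$-critical case $p=3$ the prefactor equals $1$, so $\|\phi_\om\|_{L^2}^2=\Phi(\kappa(\om))$ and the sign is that of $\Phi'(\kappa(\om))\kappa'(\om)$; here I would compute, from $\psi_\om=g_\om+\kappa g_\om(0)G_1$ and $g_\om=Q+\kappa h+O(\kappa^2)$, that $\Phi'(0)=2(Q,h)_{L^2}+2Q(0)(Q,G_1)_{L^2}$, and then use $L_{+,Q}\bigl(\tfrac12 Q+\tfrac12 x\cdot\nabla Q\bigr)=-Q$ together with $(-\Del+1)Q=Q^3$, $(-\Del+1)G_1=\delta$, $(-\Del+1)(x\cdot\nabla G_1)=-2G_1$, $x\cdot\nabla\delta=-2\delta$ and integration by parts to collapse every inner product to $Q(0)$ and $(Q,G_1)_{L^2}$, obtaining $\Phi'(0)=-Q(0)^2<0$. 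Since $\kappa(\om)>0$ is strictly decreasing in $\om$, this yields $\tfrac{d}{d\om}\|\phi_\om\|_{L^2}^2>0$ for $\om$ large, hence stability also when $p=3$. The main obstacle is precisely this critical case: one must carry out the perturbation theory in spaces that accommodate the singular component $\sim G_1$ (which lies in $H^{1-\ve}\setminus H^1$), establish the $C^1$ — hence $\om$-differentiable — dependence of $\psi_\om$ on $\kappa$ with a quantitatively controlled remainder, and push through the algebra that fixes the stabilizing sign of $\Phi'(0)$; by comparison, the spectral input of the first step is routine once uniqueness and nondegeneracy of ground states are in hand.
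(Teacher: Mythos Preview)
Your argument is correct in outline and reaches the same conclusion as the paper, but the route differs in two places worth noting.

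First, for the abstract stability criterion the paper deliberately avoids the GSS spectral hypotheses (see Remark~1.9): instead of checking that $L_{+,\om}$ has Morse index one and trivial kernel in the full space $H^1_\al(\R^2)$, it invokes the criteria of Shatah (stability) and Ohta (instability), which need only the variational characterization on the Nehari manifold, uniqueness, and $C^1$ dependence of $\om\mapsto\phi_\om$. Your sketch of the spectral picture is plausible, but the full-space kernel condition is not quite ``supplied by the perturbation analysis below'': the limiting operator $L_{+,Q}$ has the nontrivial kernel $\operatorname{span}\{\partial_1 Q,\partial_2 Q\}$ in non-radial directions, and ruling out a nearby zero mode of $L_{+,\om}$ there would need a separate argument. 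The paper's route sidesteps this entirely.

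Second, and more interestingly, the slope computation is organized differently. You expand $\Phi(\kappa)=\|\psi_\om\|_{L^2}^2$ to first order in $\kappa=\beta_\al(\om)^{-1}$ and, for $p=3$, identify the sign of $\Phi'(0)=-Q(0)^2$ through the identity $L_{+,Q}\bigl(\tfrac12 Q+\tfrac12 x\!\cdot\!\nabla Q\bigr)=-Q$ and Green-function algebra. The paper instead derives, for all $p$ at once, an exact Pohozaev-type identity (multiply \eqref{eq:tilspf} by $x\!\cdot\!\nabla\til\phi_\om$) giving
\[
\frac{d}{d\om}\|\phi_\om\|_{L^2}^2
  =\om^{\frac{2(2-p)}{p-1}}\Bigl(\tfrac{3-p}{p-1}\|\til\phi_\om\|_{L^2}^2
  +\tfrac{\til f_\om(0)^2}{2(p-1)\pi\,\beta_\al(\om)^2}
  +O(\beta_\al(\om)^{-3})\Bigr),
\]
with the error controlled by bounding $|\partial_\om\til f_\om(0)|$ via the inverse linearized operator. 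The second term is always positive, so the critical case $p=3$ falls out automatically rather than requiring a separate $\Phi'(0)$ computation. Your expansion is equivalent at leading order (indeed $\Phi'(0)\kappa'(\om)$ reproduces the $\til f_\om(0)^2$ term), and has the conceptual virtue of isolating $\kappa$ as the sole small parameter; the paper's approach has the practical virtue of treating all $p$ uniformly and of keeping the analysis at the level of $\til\phi_\om$ rather than passing to the limit $Q$.
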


One can observe the similarity between the results in \cite{Fs, F05, FO03I, FO03S, FOO08} and Theorems~\ref{thm:stabsmall} and \ref{thm:stablarge}. The paper \cite{FOO08} treats NLS with attractive $\delta$-potential in one dimension, and the papers \cite{Fs, F05, FO03I, FO03S} concern NLS with general attractive potential $V(x)$. Since $-\Del_\al$ has a unique simple negative eigenvalue, we regard it as a Schr\"odinger operator with an attractive potential, so it is natural to choose to follow the approach in these papers.

Let us give a short outline of the proofs. The local well-posedness (Proposition~\ref{Prop:LWP}) follows from the energy methods in \cite{OSY12} (see also \cite[Chapter~3]{C03}) and the Strichartz estimates for the operator $-\Del_\al$ obtained by \cite{CMY19}. The existence of ground states (Theorem~\ref{thm:exisG}) follows from a standard variational method by using the Nehari manifold. The positivity and symmetry of ground states (Theorem~\ref{thm:symmetryGS}) follow from the maximal principle and the symmetric rearrangement. In particular, we use the result of Brothers and Ziemer \cite{BZ88} to obtain the radial symmetry and decrease of ground states.

To investigate the stability properties, we consider rescaled ground states and use a perturbation argument as in \cite{F05, FO03I, FO03S}. Let $(\phi_\om)_{\om>-e_\al}$ be a family of positive ground states with $\phi_\om=\phi_{\al, \om}\in\ml{G}_{\al, \om}$. For $\om$ close to $-e_\al$, we normalize the ground states as
\[\hat\phi_\om(x) \ce \frac{\phi_\om}{\lv \phi_\om \rv_{L^2} }. \]
Then $\hat\phi_\om$ is a positive solution of
\[-\Del_\al \hat\phi + \om \hat\phi - \lv \phi_\om \rv_{L^2}^{p-1} |\hat\phi|^{p-1}\hat\phi = 0.
  \]
Since we can verify $\lv \phi_\om \rv_{L^2} \to 0$ as $\om \to -e_\al$, $\hat\phi_\om$ can be regarded as a perturbation of the solution for the linear equation
\begin{equation}\label{eq:lineq}\mathopen{}
  -\Del_\al\hat\phi
  =e_\al\hat\phi,
\end{equation}
that is, $\hat\phi_\om$ is close to the eigenfunction $\chi_\al$. On the other hand, for large $\om$, we rescale $\phi_\om$ as
\[\til\phi_\om(x)
\ce\om^{-1/(p-1)}\phi_\om(x/\sqrt{\om}). \]
Then $\til\phi_\om$ is a positive solution of
\[-\Del_{\til{\al}} \til\phi + \til\phi - |\til\phi|^{p-1} \til\phi = 0,
  \]
where
\[\til\al = \til\al(\om) \ce \al + \frac{1}{4\pi} \ln \om.
  \]
Since $-\Del_{\al=\infty}$ is the free Laplacian $-\Del$, $\til\phi_\om$ can be regarded as a perturbation of the solution for the equation
\begin{equation}\label{eq:nonlineq}
  {-}\Delta \til{\phi} + \til\phi - \lvert \til\phi \rvert^{p-1} \til\phi = 0.
\end{equation}
Therefore, we can investigate the stability properties by using the limiting equation \eqref{eq:lineq} for small $\om$ and \eqref{eq:nonlineq} for large $\om$.

The stability for small frequency (Theorem~\ref{thm:stabsmall}) follows from the argument of \cite{FO03S}. If $\om$ is sufficiently close to $-e_\al$, we can obtain the following coercivity property for the linearlized operator around the ground state.

\begin{proposition}\label{prop:coercivity}
For each $\al\in\R$ and $p>1$, there exists $\om_* > -e_\al$ such that if $\om\in(-e_\al,\om_*)$ and $\phi\in\ml{G}_\om$, the following holds: There exists a positive constant $k$ such that
\[\langle S_{\om}''(\phi) w, w \rangle \ge k\| w \|_{H_\al^1}^2
  \]
for any $w \in H_\al^1 (\R^2)$ satisfying $\int_{\R^2} \phi \ol{w}\, dx = 0$.
\end{proposition}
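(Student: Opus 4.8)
The plan is to prove the coercivity estimate by a perturbation argument, treating the linearized operator around the ground state $\phi_\om$ (with $\om$ close to $-e_\al$) as a perturbation of the linearized operator for the linear limiting equation \eqref{eq:lineq}. Since the action functional is complex-linear in phase, it is natural to split the quadratic form $\langle S_\om''(\phi)w,w\rangle$ into its action on the real and imaginary parts of $w$ relative to the (up to phase positive) ground state. Writing $\phi=\phi_\om>0$ and $w=w_1+iw_2$, one gets $\langle S_\om''(\phi)w,w\rangle = \langle L_{+,\om}w_1,w_1\rangle + \langle L_{-,\om}w_2,w_2\rangle$, where $L_{+,\om} = -\Del_\al + \om - p\phi_\om^{p-1}$ and $L_{-,\om} = -\Del_\al + \om - \phi_\om^{p-1}$, both viewed as self-adjoint operators on $L^2(\R^2)$ with form domain $H_\al^1(\R^2)$. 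The constraint $\int \phi\ol w=0$ becomes $\int\phi_\om w_1 = 0$ (the imaginary part is unconstrained). So it suffices to show: (i) $L_{-,\om}\ge 0$ with a spectral gap above $0$ that is uniform for $\om$ near $-e_\al$, and (ii) $L_{+,\om}$ is positive-definite on the codimension-one subspace $\{w_1 : \int\phi_\om w_1=0\}$, again with a uniform constant.

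First I would establish the structure of the limiting operator. As $\om\to-e_\al$, after normalization $\hat\phi_\om\to\chi_\al = G_{-e_\al}/\|G_{-e_\al}\|_{L^2}$ in $H_\al^1$, and $\|\phi_\om\|_{L^2}\to 0$, so the nonlinear potential terms $p\phi_\om^{p-1}$ and $\phi_\om^{p-1}$ tend to $0$ in an appropriate sense (this requires care: $\phi_\om^{p-1}$ must be controlled in a space that acts compactly, e.g.\ via the decomposition $\phi_\om = f_\om + c_\om G_{-e_\al}$ and the decay \eqref{eq:Gexpdecay}, so that multiplication by $\phi_\om^{p-1}$ is a relatively compact perturbation of $-\Del_\al+\om$). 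Hence both $L_{\pm,\om}$ converge, in the norm-resolvent sense, to $L_0 \ce -\Del_\al - e_\al$. By the spectral description \eqref{eq:spectrum1}–\eqref{eq:negaengen}, $L_0 = -\Del_\al - e_\al \ge 0$, its kernel is exactly $\mathrm{span}\{\chi_\al\}$ (the ground state of $-\Del_\al$), and $\sigma(L_0)\setminus\{0\} \subset [-e_\al,\infty)$, so $0$ is an isolated simple eigenvalue with gap $-e_\al>0$.

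The next step is to transfer this to $L_{\pm,\om}$ for $\om$ near $-e_\al$ via norm-resolvent convergence plus standard perturbation theory (Kato). For $L_{-,\om}$: since $\chi_\al>0$ and $\phi_\om>0$, the lowest eigenvalue of $L_{-,\om}$ is simple with positive eigenfunction; because $\phi_\om$ itself solves $L_{-,\om}\phi_\om = 0$, that lowest eigenvalue is exactly $0$ with eigenfunction $\phi_\om$, and the rest of the spectrum stays near $\sigma(L_0)\setminus\{0\}$, hence bounded below by, say, $-e_\al/2$ for $\om$ close enough to $-e_\al$. This gives $\langle L_{-,\om}w_2,w_2\rangle \ge 0$ always, and $\gtrsim \|w_2\|_{L^2}^2$ once $w_2\perp\phi_\om$; but we do not have $w_2\perp\phi_\om$. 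However, the only bad direction for $L_{-,\om}$ is $\phi_\om$ itself, where the form vanishes — so we handle the imaginary part together with the real-part constraint as in \cite{FO03S}: the standard Grillakis–Shatah–Strauss / Weinstein trick shows that if $L_{+,\om}$ restricted to $\{\int\phi_\om w_1=0\}$ is positive-definite and $L_{-,\om}\ge 0$ with kernel $\mathrm{span}\{\phi_\om\}$, then the full form is coercive on $\{\int\phi\ol w = 0\}$ modulo the kernel direction, and the phase-orbit degeneracy is exactly that kernel direction, so it is absorbed. For $L_{+,\om}$: near $\om=-e_\al$, $L_{+,\om}\to L_0$ which is positive on $\chi_\al^\perp$; the eigenvalue of $L_0$ equal to $0$ perturbs to an eigenvalue $\mu_\om$ of $L_{+,\om}$ which, by differentiating the ground-state equation in $\om$ (the Weinstein-type identity $\frac{d}{d\om}\|\phi_\om\|_{L^2}^2$ related to the sign of the bottom of $L_{+,\om}$ on the constraint set), one shows is negative — but it is a simple, isolated negative eigenvalue, and on its orthogonal complement $L_{+,\om}\ge c>0$ uniformly. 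Combined with the constraint $\int\phi_\om w_1=0$, which is transversal to the negative eigendirection (again because $\langle L_{+,\om}^{-1}\phi_\om,\phi_\om\rangle = \tfrac12\frac{d}{d\om}\|\phi_\om\|_{L^2}^2$ has the right sign, or directly because $\phi_\om$ has nonzero projection onto the negative eigenfunction), one gets $\langle L_{+,\om}w_1,w_1\rangle \ge k'\|w_1\|_{L^2}^2$ on the constraint set.

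Finally, upgrading from an $L^2$-coercivity to an $H_\al^1$-coercivity is routine: $\langle S_\om''(\phi)w,w\rangle \ge \|w\|_{H_{\al,\om}^1}^2 - C\|\phi_\om\|_{L^\infty}^{p-1}\|w\|_{L^2}^2$ and one interpolates, or one simply adds a small multiple of the (nonnegative, coercive-in-$H_\al^1$ modulo lower order) quadratic form $\|w\|_{H_{\al,\om}^1}^2$; since $\|\cdot\|_{H_{\al,\om}^1}\simeq\|\cdot\|_{H_\al^1}$ for $\om$ in a bounded neighborhood of $-e_\al$, the constant $k$ is uniform. The main obstacle I anticipate is the norm-resolvent convergence $L_{\pm,\om}\to L_0$: one must show that multiplication by $\phi_\om^{p-1}$ is a small, relatively compact perturbation in the functional setting adapted to $-\Del_\al$ (whose form domain $H_\al^1(\R^2)$ contains the singular function $G_\lam\notin H^1$), which requires estimating $\||\phi_\om|^{p-1}w\|$ in $H_\al^{-1}$ using the decomposition $w=g+cG_\lam$ and the logarithmic/exponential behavior \eqref{eq:Gexpdecay} of $G_\lam$ near $0$ and $\infty$, together with the established convergence $\hat\phi_\om\to\chi_\al$; the $p<2$ range, where $|\phi|^{p-1}$ is only Hölder, needs the extra observation that near the origin $\phi_\om$ inherits the mild logarithmic singularity of $G_{-e_\al}$ but $p-1$ times a logarithm is still locally in every $L^q$, so no genuine difficulty arises, only bookkeeping.
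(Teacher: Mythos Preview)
Your overall strategy---treating $S_\om''(\phi_\om)$ as a small perturbation of $-\Del_\al-e_\al$ via the convergences $\|\phi_\om\|_{L^2}\to0$ and $\hat\phi_\om\to\chi_\al$, and then exploiting the spectral gap of $-\Del_\al-e_\al$ above its simple zero eigenvalue---is exactly the route the paper takes (the proof is omitted there and deferred to \cite[Section~4]{FO03S}).

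One correction that considerably streamlines the argument: you misread the orthogonality condition. Since $\phi$ is real, the complex equation $\int\phi\,\ol w\,dx=0$ yields \emph{two} real constraints, $\int\phi w_1=0$ \emph{and} $\int\phi w_2=0$. Hence $w_2\perp\phi_\om$ \emph{is} imposed, and because $\ker L_{-,\om}=\mathrm{span}\{\phi_\om\}$, coercivity of $L_{-,\om}$ on that subspace follows directly from the spectral gap---the GSS/Weinstein detour you invoke is unnecessary. Likewise, no negative-eigenvalue analysis or Vakhitov--Kolokolov sign is needed for $L_{+,\om}$: the potential term obeys $p\int\phi_\om^{p-1}|w|^2\le p\|\phi_\om\|_{L^{p+1}}^{p-1}\|w\|_{L^{p+1}}^2=o(1)\|w\|_{H_\al^1}^2$ as $\om\to-e_\al$, and the constraint $w_1\perp\phi_\om$ together with $\hat\phi_\om\to\chi_\al$ forces $w_1$ nearly into $\chi_\al^\perp$, on which $-\Del_\al+\om$ is uniformly coercive in $H_\al^1$. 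The bound $\langle L_{+,\om}w_1,w_1\rangle\ge k\|w_1\|_{H_\al^1}^2$ then follows by a direct estimate. With these two simplifications your sketch collapses to precisely the argument in \cite{FO03S}.
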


It is known that this coercivity implies the stability (see, e.g., \cite{GSS87, L09}). Because we can prove Proposition~\ref{prop:coercivity} exactly in the same way as \cite[Section~4]{FO03S}, we omit the proof.

\begin{remark}
In this paper, we do not discuss the uniqueness of ground states for small frequencies because we do not need it if we just prove the stability. However, we can obtain the uniqueness by the bifurcation theory. See, e.g., \cite{RW88, GNT04, MMS20} for more details.
\end{remark}

To investigate the properties of the ground states for large frequency $\om$, we use the limiting equation \eqref{eq:nonlineq}. It is well known that \eqref{eq:nonlineq} has the unique positive radial ground state $\til\phi_\infty\in H^1(\R^2)$ (see, e.g., \cite{BL83I,K89}), and it is nondegenerate in the radial space, that is, the kernel of the linearized operator $\til L_\infty \ce -\Del+1-p\til\phi_\infty^{p-1}$ is trivial: $\ker\til L_\infty|_{H_{\rad}^1}=\{0\}$. By using these properties, we establish the uniqueness (Theorem~\ref{eq:uniqG}) and nondegeneracy (Lemma~\ref{lem:ftL}) for large $\om$ following the argument of \cite[Proposition~2 (v)]{F05}. Moreover, we can obtain the regularity of the map $\om\mapsto\phi_\om$ (Corollary~\ref{cor:reg}). To obtain the stability and instability, we use the following criteria.

\begin{proposition}[\cite{O14, S83}] \label{prop:scstabinsta}
For $\om>\om_1$, the standing wave $e^{i\om t}\phi_\om$ of \eqref{NLS} is stable if $\frac{d}{d\om}\|\phi_\om\|_{L^2}^2>0$ and unstable if $\frac{d}{d\om}\|\phi_\om\|_{L^2}^2<0$.
\end{proposition}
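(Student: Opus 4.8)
The statement is an instance of the orbital stability/instability theory of Grillakis--Shatah--Strauss in the form systematized in \cite{O14, S83}, so the plan is to verify the structural and spectral hypotheses of that abstract theory for \eqref{NLS}. I would first record the Hamiltonian structure: \eqref{NLS} conserves the energy $E$ and the charge $Q(u)\ce\frac12\lv u\rv_{L^2}^2$, it is invariant under the phase rotation $u\mapsto e^{i\theta}u$ whose generator is multiplication by $i$, and $\phi_\om$ is a critical point of the action $S_\om=E+\om Q$. Because the point interaction destroys translation invariance, gauge rotation is the only continuous symmetry; the orbit of $\phi_\om$ is therefore one-dimensional with tangent space $\on{span}\{i\phi_\om\}$, and this is what singles out the slope of $\om\mapsto Q(\phi_\om)$ as the decisive quantity.

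The core of the argument is the spectral analysis of $L_\om\ce S_\om''(\phi_\om)$. Writing a perturbation as $v+iw$ with $v,w$ real, $L_\om$ diagonalizes as $L_\om=\on{diag}(L_+,L_-)$ with
\[L_+=-\Del_\al+\om-p\phi_\om^{p-1},\qquad L_-=-\Del_\al+\om-\phi_\om^{p-1},\]
both defined through the quadratic form \eqref{eq:qf} shifted by the form-compact potential $\phi_\om^{p-1}$. The stationary equation \eqref{eq:sp} gives $L_-\phi_\om=0$; since $\phi_\om>0$ by Theorem \ref{thm:symmetryGS} and two positive functions cannot be orthogonal, $\phi_\om$ must be the ground state of $L_-$, so $L_-\ge0$ and $\ker L_-=\on{span}\{\phi_\om\}$. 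For $L_+$ I would compute the Morse index: \eqref{eq:sp} yields
\[\langle L_+\phi_\om,\phi_\om\rangle=(1-p)\lv\phi_\om\rv_{L^{p+1}}^{p+1}<0,\]
so $n(L_+)\ge1$, while the characterization of $\phi_\om$ as a minimizer of $S_\om$ on the codimension-one Nehari manifold forces $n(L_+)\le1$; hence $n(L_+)=1$, and the nondegeneracy $\ker L_+=\{0\}$ for $\om>\om_1$ is exactly Lemma \ref{lem:ftL}. Because $\phi_\om^{p-1}$ decays, $\sigma_{\mr{ess}}(L_\pm)=[\om,\infty)$, so these facts give that $L_\om$ has exactly one negative eigenvalue, $\ker L_\om=\on{span}\{i\phi_\om\}$, and the rest of its spectrum is positive and bounded away from $0$.

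With this spectral picture I would set $d(\om)\ce S_\om(\phi_\om)$. Since $S_\om'(\phi_\om)=0$ and $\om\mapsto\phi_\om$ is $C^1$ by Corollary \ref{cor:reg}, differentiation gives $d'(\om)=Q(\phi_\om)=\frac12\lv\phi_\om\rv_{L^2}^2$ and hence
\[d''(\om)=\frac12\frac{d}{d\om}\lv\phi_\om\rv_{L^2}^2.\]
The abstract theorem of \cite{O14, S83}, under the single-negative-eigenvalue and nondegeneracy conditions just verified, asserts that $e^{i\om t}\phi_\om$ is stable when $d''(\om)>0$ and unstable when $d''(\om)<0$, which is exactly the claimed dichotomy. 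Concretely, for stability one combines $d''>0$ with $n(L_\om)=1$ to build a coercive Lyapunov functional on the subspace orthogonal to the unstable and kernel directions and then runs the standard modulation argument on the global flow from Proposition \ref{Prop:LWP}; for instability one uses $d''<0$ and the single negative mode to construct a differentiable escaping direction.

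The main obstacle is the spectral bookkeeping in the singular energy space $H_\al^1(\R^2)$, not the abstract deduction. The operators $L_\pm$ are perturbations of the point-interaction Laplacian, whose form domain \eqref{eq:energyspace} contains the Green function $G_\lam\notin H^1(\R^2)$, so the usual min-max and Perron--Frobenius arguments must be run entirely through the form \eqref{eq:qf}: I must confirm that $-\Del_\al$ is positivity preserving (so that the ground state of $L_-$ is positive and simple), that $\phi_\om^{p-1}$ is form-compact relative to $-\Del_\al+\om$ so the essential spectrum stays $[\om,\infty)$ and only finitely many eigenvalues can cross $0$, and that the Nehari minimization really controls the Morse index in this functional setting. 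Once these points are secured, the index count together with Lemma \ref{lem:ftL} and Corollary \ref{cor:reg} feeds directly into the criterion of \cite{O14, S83}.
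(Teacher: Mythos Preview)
Your plan follows the Grillakis--Shatah--Strauss spectral route, but that is \emph{not} the approach the paper takes---and, in fact, it is not the content of the references \cite{O14,S83} you invoke. The remark immediately after the proposition explains the choice: the GSS criterion \cite{GSS87} would require a full spectral analysis of $S_\om''(\phi_\om)$, which the paper explicitly avoids. Instead it applies Shatah's variational stability argument \cite{S83} and Ohta's variational instability argument \cite{O14}, neither of which needs a Morse-index count or a Perron--Frobenius step. What those arguments require is exactly what the paper has already established: the Nehari characterization $\ml{G}_\om=\ml{M}_\om$ together with compactness of minimizing sequences (Proposition~\ref{prop:eg} and Lemma~\ref{lem:variconv}), uniqueness of the positive radial ground state for $\om>\om_1$ (Theorem~\ref{thm:uniqG}), and the $C^1$ regularity of $\om\mapsto\phi_\om$ (Corollary~\ref{cor:reg}). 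With these in hand, Shatah's concavity/convexity argument on $d_\al(\om)$ yields stability when $d''(\om)>0$, and Ohta's Nehari-manifold construction produces an escaping direction when $d''(\om)<0$.

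Your GSS route is plausible in principle but has a concrete gap as written: you assert that ``$\ker L_+=\{0\}$ is exactly Lemma~\ref{lem:ftL}'', yet that lemma establishes nondegeneracy of the rescaled operator $\til{L}_\om$ only on the \emph{radial} real subspace $H^1_{\al,\rad}(\R^2;\R)$, whereas the GSS hypothesis needs $\ker L_+=\{0\}$ on all of $H^1_\al(\R^2;\R)$. Extending nondegeneracy to the non-radial angular sectors is not addressed anywhere in the paper. The positivity-preservation of $(-\Del_\al+\om)^{-1}$ that you flag as needed for the Perron--Frobenius analysis of $L_-$ is likewise not established here. So while your approach could eventually be made to work, it demands additional spectral input that the paper's Shatah/Ohta route was chosen precisely to sidestep.
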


\begin{remark}
Proposition~\ref{prop:scstabinsta} are well-known as the criteria of Grillakis, Shatah, and Strauss~\cite{GSS87} (see also \cite{SS85, W86}). To use their result, we need to investigate the spectral properties of the linearized operator $S_\om''(\phi_\om)$, but we do not discuss its spectra in this paper. Instead, we can apply the arguments of \cite{S83} for the stability and \cite{O14} for the instability because they only require the variational characterization on the Nehari manifold, the uniqueness, and the differentiability of the map $\om\mapsto\phi_\om$ with $\phi_\om\in\ml{G}_\om$. These properties are discussed in this paper.
\end{remark}

From Proposition~\ref{prop:scstabinsta}, the stability/instability problems can be reduced to the investigation of the sign of the derivative $\frac{d}{d\om}\|\phi_\om\|_{L^2}^2$. When $\al=\infty$, i.e., without interaction, one can show by the scaling invariance for the equation that the ground states $\phi_{\infty,\om}$ of \eqref{eq:sp} satisfies $\frac{d}{d\om} \| \phi_{\infty,\om} \|_{L^2}^2 > 0$ if $1 < p < 3$ and $\frac{d}{d\om} \| \phi_{\infty,\om} \|_{L^2}^2 < 0$ if $p > 3$ for all $\om > 0$. This means that when $\al=\infty$, the ground-state standing wave $e^{i \om t}\phi_{\infty,\om}$ of \eqref{NLS} is stable if $1<p<3$ \cite{CL82} and unstable if $p \ge 3$ \cite{BC81} (see \cite{W82} for $p=3$).

To investigate the sign of $\frac{d}{d\om}\|\phi_\om\|_{L^2}^2$ for large $\om$, we apply the argument of \cite{F05, Fs}. Instead of $\frac{d}{d\om}\|\phi_\om\|_{L^2}^2$, we calculate the rescaled version $\frac{d}{d\om}\|\til\phi_\om\|_{L^2}^2$ and use the convergence $\til\phi_\om\to \til\phi_\infty$. To estimate some error terms, we establish and use a boundedness of the inverse linearized operator of $\til\phi_\om$. After that, we can determine the sign of $\frac{d}{d\om}\|\phi_\om\|_{L^2}^2$, and combining Proposition~\ref{prop:scstabinsta} we obtain Theorem~\ref{thm:stablarge}.

The difficulty of the proofs of our results mainly comes from the treatment of functions in the energy space $H_\al^1(\R^2)$ and the domain $D(-\Del_\al)$. Of special importance in one-dimensional case is the fact that the fundamental solution of $(1-\Del)$ is in $H^1(\R)$, so one can use $H^1(\R)$ as a natural space of the nonlinear flow associated with the corresponding NLS. The situation changes essentially in two dimension since we are forced to work with the perturbed $H_\al^1(\R^2)$ space, so there are nontrivial difficulties to apply of the variational technique from \cite{FOO08} and the cases of slowly decaying potentials \cite{Fs, F05, FO03I, FO03S}. For a function in the spaces $H_\al^1(\R^2)$ or $D(-\Del_\al)$, we need to decompose it into the regular and singular parts and to treat these separately, and we have to avoid several difficult points requiring appropriate new treatments.
\begin{itemize}
\item The local well-posedness for the standard 2d NLS with or without potential requires the use of Strichartz estimates in Sobolev spaces
\begin{equation}\label{eq.lwp1}
  H^{s,p}(\R^2) = \{ g = (1 - \Del)^{-s} \psi \colon\, \psi \in L^p(\R^2) \}, \quad p \in (1,2), \ s\in (0,1],
\end{equation}
if a contraction argument is applied. On the other hand,  the case of singular perturbed Laplacian $-\Del_\al$ requires the replacement of the classical Sobolev space $H^1(\R^2)$ by the perturbed space $H^1_\al(\R^2)$, and we need to decompose functions $g \in H^1_\al(\R^2)$ as
\begin{equation}\label{eq:domdecom}
  g = f + c G_\lam.
\end{equation}
There is no flexible treatment (up to our knowledge) of appropriate generalization of generic spaces $H^{s,p}$ for the Laplacian of type $-\Del_\al$. For this we have chosen another approach based on compactness argument and the results in \cite{OSY12}.
\item The existence of ground states  seems to be closely connected with the inclusion
\[H^1(\R^2) \subset H^1_{\al}(\R^2).
  \]
However, the ground states $\phi = f + f(0) \beta_\al(\om)^{-1} G_\om$ from Theorem \ref{thm:symmetryGS} have nontrivial singular part, since $e^{i\theta} f(0)$ is positive. This fact shows that the ground states associated with $\al \in \R$ are different from ground states with $\al=\infty$. Moreover the ground state $\phi$ from Theorem \ref{thm:symmetryGS} is not in $H^1(\R^2)$.
\item The symmetry of the ground state for the classical NLS can be obtained by Schwartz symmetrization. Since we have the decomposition $\phi = f + f(0) \beta_\al(\om)^{-1} G_\om$ for any $\phi \in D(-\Delta_\al)$ into regular and singular parts, a formal symmetrization
\[\phi^* = \left( f + \frac{f(0)}{\beta_\al(\lam)} G_\lam \right)^*
  \]
cannot work. We have chosen the following symmetrization
\[ f + \frac{f(0)}{\beta_\al(\lam)} G_\lam \to f^* + \frac{|f(0)|}{\beta_\al(\lam)} G_\lam. \]
The technical difficulties associated with this symmetrization can be overcome by using the results in \cite{AL89}.
\item The uniqueness and nondegeneracy of ground states require careful use of the decomposition \eqref{eq:domdecom} and modify accordingly the approach in \cite{FO03I, FO03S, F05}.
\item To determine the sign of $\frac{d}{d\om}\|\til\phi_\om\|_{L^2}^2$, we need to estimate the error term $\pt_\om\til{f}_\om(0)$ coming from the interaction of $-\Del_\al$, where $\til{f}_\om$ is the regular part of $\til{\phi}_\om$. As in the previous work \cite{Fs,F05}, we use the boundedness of the inverse of the linearized operator $\til{L}_{\om}$. However, it is not trivial how to express and estimate the term $\pt_\om\til{f}_\om(0)$ by using the operator $\til{L}_{\om}^{-1}$. To overcome this difficulty we make a good use the expression \eqref{eq:opaction} of the operator and the expression of the bilinear form as
\[f(0)
  =\left\langle (\Delta_\al+\lam)G_\lam, f + \frac{f(0)}{\beta_\al(\lam)} G_\lam \right\rangle \]
for $f\in H^2(\R^2;\R)$. For more details, see Lemma~\ref{lem:deltilf0}.
\end{itemize}

The rest of organization of this paper is as follows. In Section~\ref{sec:2} we correct the properties of $G_\lam$ used in this paper. In Section~\ref{sec:3} we prove Theorem~\ref{thm:exisG} through the characterization with the Nehari functional. Section~\ref{sec:4} is devoted to the proof of Theorem~\ref{thm:symmetryGS}. In Section~\ref{sec:5} we show that a family of rescaled ground states converges to the ground state of NLS without interaction (i.e. $\al=\infty$) as $\om\to\infty$. In Section~\ref{sec:6} we show lower boundedness and nondegeneracy of the linearized operator around the ground state for large $\om$. This lower boundedness will be used in Section~\ref{sec:9} as the boundedness of the inverse operator. In Section~\ref{sec:7} we prove Theorem~\ref{eq:uniqG}. In Section~\ref{sec:8} we discuss the regularity of the map $\om\mapsto\phi_\om$ for large $\om$. Finally we prove Theorem~\ref{thm:stablarge} in Section~\ref{sec:9}. In Appendix~\ref{sec:A} we review the properties of wave operators and Strichartz estimates for the operator $-\Del_\al$. In Appendix~\ref{sec:B} we prove Proposition~\ref{Prop:LWP}.



    \section{Preliminaries} \label{sec:2}

The aim of this section is to recall the main properties of the singular-perturbed Laplace operator $-\Del_\al$ and the Green function $G_\lam$.


Note that \eqref{eq:GlamH1ep} and \eqref{eq:Gexpdecay} imply
\begin{equation}\label{eq.pprlm1}
  G_\lam \in L^q(\R^2) \quad \text{for all } q \in [1, \infty ).
\end{equation}
This fact leads easily to the following Sobolev inequality

\begin{lemma}\label{lem:Sobemb}
For any $q \in [2, \infty)$ and $\lam > -e_\al$ there exists a constant $C > 0$ such that
\begin{equation}\label{eq.sob1}
   \| v \|_{L^q} \le C \| v \|_{H_{\al,\lam}^1}
\end{equation}
for all $v \in H_{\al}^1 (\R^2)$.
\end{lemma}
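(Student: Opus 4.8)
The plan is to prove the Sobolev-type inequality \eqref{eq.sob1} by exploiting the decomposition $v = f + cG_\lam$ with $f \in H^1(\R^2)$ and $c \in \mb{C}$, handling the regular part $f$ by the classical two-dimensional Sobolev embedding $H^1(\R^2) \hookrightarrow L^q(\R^2)$ for $q \in [2,\infty)$, and the singular part $cG_\lam$ by the integrability \eqref{eq.pprlm1}, which gives $G_\lam \in L^q(\R^2)$ for every $q \in [1,\infty)$. The key point that makes this work is the \emph{quantitative} control of both $\lv f \rv_{H^1}$ and $|c|$ in terms of $\lv v \rv_{H^1_{\al,\lam}}$ afforded by the explicit form \eqref{eq:qf} of the quadratic form.

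First I would fix $\lam > -e_\al$, so that $\beta_\al(\lam) > 0$, and write an arbitrary $v \in H^1_\al(\R^2)$ as $v = f + cG_\lam$ according to \eqref{eq:energyspace}. From \eqref{eq:qf} and \eqref{nn21} we have
\[
  \lv v \rv_{H^1_{\al,\lam}}^2 = \lv \nabla f \rv_{L^2}^2 + \lam \lv f \rv_{L^2}^2 + \beta_\al(\lam) |c|^2 .
\]
Since $\lam > -e_\al$ can be negative, I would first pass to a comparison parameter $\mu > 0$: by the equivalence $\lv v \rv_{H^1_{\al,\lam}} \simeq \lv v \rv_{H^1_{\al,\mu}}$ it suffices to prove \eqref{eq.sob1} with $\lam$ replaced by some fixed $\mu > 0$, e.g. $\mu = 1 - e_\al$ if $1 - e_\al > 0$, or any large $\mu$; then $\lv \nabla f \rv_{L^2}^2 + \mu \lv f \rv_{L^2}^2$ is comparable to $\lv f \rv_{H^1}^2$, and together with the $\beta_\al(\mu)|c|^2$ term this yields
\[
  \lv f \rv_{H^1(\R^2)} \le C \lv v \rv_{H^1_{\al,\mu}}, \qquad |c| \le C \lv v \rv_{H^1_{\al,\mu}} .
\]
(I should double-check whether the stated $\lam > -e_\al$ already forces $\lam > 0$; if not, the reduction via $\mu$ is the clean way to proceed, and if $-e_\al$ happens to be negative this is harmless anyway.)

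Then I would simply estimate, by the triangle inequality in $L^q$, the classical Gagliardo–Nirenberg–Sobolev inequality in $\R^2$ (which gives $\lv f \rv_{L^q} \le C_q \lv f \rv_{H^1}$ for all $q \in [2,\infty)$, with the constant blowing up as $q \to \infty$ but finite for each fixed $q$), and \eqref{eq.pprlm1}:
\[
  \lv v \rv_{L^q} \le \lv f \rv_{L^q} + |c|\, \lv G_\lam \rv_{L^q} \le C_q \lv f \rv_{H^1} + |c|\, \lv G_\lam \rv_{L^q} \le C \lv v \rv_{H^1_{\al,\mu}} \le C' \lv v \rv_{H^1_{\al,\lam}},
\]
using in the last step again the norm equivalence. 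This closes the argument.

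There is no serious obstacle here; the only mild subtlety — the ``hard part'', such as it is — is making sure the coefficients $\lv f \rv_{H^1}$ and $|c|$ are controlled by the $H^1_{\al,\lam}$-norm \emph{uniformly}, i.e. that the decomposition $v = f + cG_\lam$ is a topological isomorphism onto $H^1(\R^2) \oplus \mb{C}$; this is exactly what formula \eqref{eq:qf} delivers once one has reduced to a parameter $\mu$ for which $\lv \nabla f \rv_{L^2}^2 + \mu \lv f \rv_{L^2}^2 \simeq \lv f \rv_{H^1}^2$. One should also note that $G_\lam$ depends on $\lam$ but, by \eqref{eq:GlamGmu}, changing $\lam$ only alters $f$ by an $H^2 \subset H^1$ function and $c$ not at all, so the choice of representative is immaterial for the estimate.
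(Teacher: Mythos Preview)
Your proof is correct and follows essentially the same route as the paper's: decompose $v=f+cG_\lam$, bound $\lv v\rv_{L^q}$ by $\lv f\rv_{H^1}+|c|$ via the classical Sobolev embedding and \eqref{eq.pprlm1}, and then observe from \eqref{eq:qf} that $\lv f\rv_{H^1}+|c|\simeq\lv v\rv_{H^1_{\al,\lam}}$. Your detour through an auxiliary parameter $\mu$ is unnecessary, since $e_\al=-4e^{-4\pi\al-2\gamma}<0$ for every $\al\in\R$, so $\lam>-e_\al$ already forces $\lam>0$.
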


\begin{proof}
Any $v \in H_{\al}^1 (\R^2)$ has the representation $v = f + c G_\lam$ for some $\lam > -e_\al$, $f \in H^1 (\R^2)$, and $c \in \mb{C}$. Then the property \eqref{eq.pprlm1} implies that
\[\| v \|_{L^q} \lesssim \|f\|_{L^q} + |c| \lesssim \| f \|_{H^1} + |c|. \]
The relation \eqref{nn21} implies that
\[ \| f \|_{H^1} + |c| \sim \| v \|_{ {H_{\al,\lam}^1} }. \]
Hence, we have \eqref{eq.sob1}.
\end{proof}

\begin{lemma}
For $\lam,\mu>0$, the inner product of $G_\lam$ and $G_\mu$ is given by
\[( G_\lam, G_\mu )_{L^2} = \left\{\begin{array}{@{} c l @{}}
  \dfrac{ \ln\lam - \ln\mu }{ 4 \pi ( \lam - \mu ) }, & \mu \ne \lam,
\\[3\jot]
  \dfrac{1}{ 4 \pi \lam}, &\mu = \lam.
  \end{array}\right.
  \]
\end{lemma}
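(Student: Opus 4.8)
The plan is to move to the Fourier side, where $G_\lam$ becomes an explicit radial function, and thereby reduce the claim to an elementary one–variable integral. With the Fourier normalization used in \eqref{eq:expG1} one has $\ml{F}[G_\lam](\xi)=\tfrac{1}{2\pi}(|\xi|^2+\lam)^{-1}$, which lies in $L^2(\R^2)$ (equivalently $G_\lam\in H^{1-\ve}\subset L^2$ by \eqref{eq:GlamH1ep}), so Plancherel's identity is available. Since $G_\lam$ and $G_\mu$ are real,
\[
  (G_\lam,G_\mu)_{L^2}=\bigl(\ml{F}[G_\lam],\ml{F}[G_\mu]\bigr)_{L^2}=\frac{1}{4\pi^2}\int_{\R^2}\frac{d\xi}{(|\xi|^2+\lam)(|\xi|^2+\mu)} .
\]
The integrand is radial, so I pass to polar coordinates and substitute $t=r^2$, obtaining
\[
  (G_\lam,G_\mu)_{L^2}=\frac{1}{2\pi}\int_0^{\infty}\frac{r\,dr}{(r^2+\lam)(r^2+\mu)}=\frac{1}{4\pi}\int_0^{\infty}\frac{dt}{(t+\lam)(t+\mu)} .
\]

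It then remains only to evaluate the last integral. For $\mu\ne\lam$ I would use the partial fraction $\frac{1}{(t+\lam)(t+\mu)}=\frac{1}{\mu-\lam}\bigl(\frac{1}{t+\lam}-\frac{1}{t+\mu}\bigr)$; an antiderivative is $\frac{1}{\mu-\lam}\ln\frac{t+\lam}{t+\mu}$, which tends to $0$ as $t\to\infty$ and equals $\frac{1}{\mu-\lam}\ln\frac{\lam}{\mu}$ at $t=0$, so the integral equals $\frac{\ln\mu-\ln\lam}{\mu-\lam}=\frac{\ln\lam-\ln\mu}{\lam-\mu}$, and multiplying by $\frac{1}{4\pi}$ gives the first line of the asserted formula. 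For $\mu=\lam$ one has directly $\int_0^\infty(t+\lam)^{-2}\,dt=\lam^{-1}$, hence $(G_\lam,G_\lam)_{L^2}=\frac{1}{4\pi\lam}$; note this is also the limit as $\mu\to\lam$ of the previous expression, as consistency demands.

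There is no genuine obstacle here: the only points worth a sentence are the legitimacy of Plancherel (guaranteed by $G_\lam\in L^2$) and convergence of the improper integral at $+\infty$, which is immediate since the integrand is $O(t^{-2})$. An alternative, more operator–theoretic derivation avoids Fourier analysis altogether: from $(-\Del+\lam)G_\lam=\delta$ one checks $(-\Del+\mu)^{-1}G_\lam=\frac{1}{\mu-\lam}(G_\lam-G_\mu)$, so that, pairing against $\delta=(-\Del+\mu)G_\mu$ and using that $G_\lam-G_\mu$ is continuous by \eqref{eq:GlamGmu}, $(G_\lam,G_\mu)_{L^2}=\frac{1}{\mu-\lam}(G_\lam-G_\mu)(0)$, and the small–argument expansion of $K_0$ behind \eqref{eq:defGlambda} yields $(G_\lam-G_\mu)(0)=\frac{1}{4\pi}(\ln\mu-\ln\lam)$; the Plancherel route is, however, shorter and fully self-contained, so that is the one I would write out.
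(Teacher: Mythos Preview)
Your proof is correct: the Plancherel computation is exactly the kind of ``direct calculation'' the paper invokes, and every step---the Fourier normalization $\ml{F}[G_\lam](\xi)=\tfrac{1}{2\pi}(|\xi|^2+\lam)^{-1}$, the polar--coordinate reduction, the partial fraction evaluation, and the $\mu=\lam$ case---is accurate. The paper gives no details beyond ``direct calculations,'' so your write-up is in fact a faithful expansion of the intended argument; the alternative resolvent-identity route you sketch is also valid but, as you note, less self-contained here.
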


\begin{proof}
The assertion follows from direct calculations.
\end{proof}

\begin{lemma}\label{lem:xgG1}
$\ml{F} [ x \cdot \nabla G_1 ] = -2 ( |\xi|^2 + 1 )^{-1} \ml{F} [G_1]$.
\end{lemma}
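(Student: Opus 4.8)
The plan is to reduce the identity to the dilation relation $G_\lam(x)=G_1(\sqrt{\lam}\,x)$ together with the explicit Fourier symbol of $G_\lam$. From the representation \eqref{eq:defGlambda} we have $G_\lam(x)=\frac1{2\pi}K_0(\sqrt{\lam}\,|x|)=G_1(\sqrt{\lam}\,x)$; differentiating in $\lam$ and using the chain rule gives $\pt_\lam G_\lam\big|_{\lam=1}=\tfrac12\,x\cdot\nabla G_1$, that is, $x\cdot\nabla G_1=2\,\pt_\lam G_\lam\big|_{\lam=1}$. Since $G_1\in L^1_{\mathrm{loc}}(\R^2)$ and decays, $x\cdot\nabla G_1$ is a tempered distribution, so both sides of the asserted identity make sense; moreover, using $K_0'=-K_1$ and the asymptotics of $K_1$ near $0$ and at infinity (see \cite[Chapter 10]{NIST:DLMF}) one checks that $x\cdot\nabla G_1(x)=-\frac1{2\pi}|x|K_1(|x|)$ is bounded with exponential decay, hence lies in $L^1(\R^2)\cap L^2(\R^2)$ and $\ml{F}[x\cdot\nabla G_1]$ is an honest continuous bounded function. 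The difference quotients $(\lam-1)^{-1}(G_\lam-G_1)$ converge to $\pt_\lam G_\lam|_{\lam=1}$ in $\mc{S}'(\R^2)$, which (by continuity of $\ml{F}$ on $\mc{S}'$) legitimizes interchanging $\pt_\lam$ with $\ml{F}$ below.

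Next I would record the Fourier symbol of $G_\lam$: applying $\ml{F}$ to the defining relation $G_\lam=\frac1{2\pi}\ml{F}^{-1}\bigl[(|\xi|^2+\lam)^{-1}\bigr]$ from \eqref{eq:expG1} gives $\ml{F}[G_\lam](\xi)=\frac1{2\pi}(|\xi|^2+\lam)^{-1}$ (equivalently, take $\ml{F}$ of $(-\Del+\lam)G_\lam=\delta$, the normalization being fixed by \eqref{eq:expG1}). As $\lam\mapsto(|\xi|^2+\lam)^{-1}$ and its $\lam$-derivatives are controlled uniformly for $\lam$ near $1$, we obtain
\[\ml{F}[\pt_\lam G_\lam](\xi)=\pt_\lam\ml{F}[G_\lam](\xi)=-\frac1{2\pi}(|\xi|^2+\lam)^{-2}=-(|\xi|^2+\lam)^{-1}\,\ml{F}[G_\lam](\xi).\]

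Combining the two steps at $\lam=1$ yields
\[\ml{F}[x\cdot\nabla G_1]=2\,\ml{F}[\pt_\lam G_\lam]\big|_{\lam=1}=-2(|\xi|^2+1)^{-1}\,\ml{F}[G_1],\]
which is the claim. I do not expect a genuine obstacle; the only points requiring a word of care are that $x\cdot\nabla G_1$ is a legitimate tempered distribution (handled by the Bessel asymptotics) and that $\pt_\lam$ commutes with $\ml{F}$ (handled by the uniform control of the symbol). An equally short alternative avoids the dilation trick and uses the general identity $\ml{F}[x\cdot\nabla g]=-2\,\ml{F}[g]-\xi\cdot\nabla_\xi\ml{F}[g]$ on $\R^2$ (which follows from $\ml{F}[\pt_{x_j}g]=i\xi_j\ml{F}[g]$ and $\ml{F}[x_jg]=i\pt_{\xi_j}\ml{F}[g]$) applied to $g=G_1$, after which the one-line computation $\xi\cdot\nabla_\xi(|\xi|^2+1)^{-1}=-2|\xi|^2(|\xi|^2+1)^{-2}$ gives the same result.
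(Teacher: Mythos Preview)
Your argument is correct. Your primary route is genuinely different from the paper's: you exploit the dilation relation $G_\lam(x)=G_1(\sqrt{\lam}\,x)$ and differentiate in the spectral parameter, so the identity drops out of $\pt_\lam(|\xi|^2+\lam)^{-1}=-(|\xi|^2+\lam)^{-2}$. The paper instead computes directly via the standard Fourier rule $\ml{F}[x\cdot\nabla g]=-2\ml{F}[g]-\xi\cdot\nabla_\xi\ml{F}[g]$ in two dimensions applied to $g=G_1$, which is exactly the ``alternative'' you sketch in your last sentence. Your dilation argument is slightly more conceptual (it makes transparent that the extra factor $(|\xi|^2+1)^{-1}$ is the $\lam$-derivative of the resolvent symbol) at the cost of a small regularity discussion to justify interchanging $\pt_\lam$ and $\ml{F}$; the paper's computation is a two-line algebraic check that needs no such justification. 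Either route is entirely adequate for this lemma.
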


\begin{proof}
By a direct calculation, we have
\begin{align*}
  \nabla \ml{F} [G_1] & = -\frac{\xi}{\pi(|\xi|^2+1)^2},
\\\ml{F} [ x \cdot \nabla G_1 ] &= -2 \ml{F} [G_1] - \xi \cdot \nabla \ml{F}[G_1] = -\frac{1}{ \pi ( |\xi|^2 + 1 ) } + \frac{ |\xi|^2 }{ \pi ( |\xi|^2 + 1 )^2 }
\\&= -\frac{1}{ \pi ( |\xi|^2 + 1 )^2} = -2 ( |\xi|^2 + 1 )^{-1} \ml{F} [G_1].
\end{align*}
Thus, we have the assertion.
\end{proof}

    \section{Existence of ground states}\label{sec:3}

In this section, we prove existence of ground states for \eqref{eq:sp} by using a standard variational method and properties of the operator $-\Del_\al$. Throughout this section, we fix $\om>-e_\al$. We define the Nehari functional by
\begin{align*}
  K_{\al,\om}(v)
  &\ce\del_\lam S_{\al,\om}(\lam v)|_{\lam=1}
\\&=\langle S_{\al,\om}'(v),v\rangle
  =\|v\|_{H_{\al,\om}^1}^2
  -\|v\|_{L^{p+1}}^{p+1}
\end{align*}
for $v\in H_\al^1(\R^2)$. We denote
\begin{align*}
  \mc{K}_{\al,\om}
  &\ce\{v\in H_\al^1(\R^2)\colon\, v\ne0,~K_{\al,\om}(v)=0\},
\\d_\al(\om)
  &\ce\inf\{S_{\al,\om}(v)\colon\, v\in\mc{K}_{\al,\om}\},
\\\mc{M}_{\al,\om}
  &\ce\{v\in\mc{K}_{\al,\om}\colon\, S_{\al,\om}(v)=d_\al(\om)\}.
\end{align*}
For simplicity of notations, we shall often omit the subscript $\al$ like $S_{\om}$, $K_{\om}$, and so on. We note that $\ml{G}_{\om}\subset\ml{A}_{\om}\subset\ml{K}_{\om}$.

We will prove the following.
\begin{proposition}\label{prop:eg}
For any $\om>-e_\al$,
\begin{align*}
  \mc{G}_{\om}
  =\mc{M}_{\om}
  \ne\emptyset.
\end{align*}
\end{proposition}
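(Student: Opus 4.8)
The plan is to prove the chain of equalities $\mc{G}_\om = \mc{M}_\om \neq \emptyset$ via the standard Nehari-manifold variational scheme, adapted to the perturbed energy space $H_\al^1(\R^2)$. I would organize the argument into three parts: (i) $\mc{M}_\om \neq \emptyset$, i.e. the infimum $d_\al(\om)$ is attained; (ii) $\mc{M}_\om \subset \mc{G}_\om$, i.e. every minimizer on the Nehari manifold solves \eqref{eq:sp}; (iii) $\mc{G}_\om \subset \mc{M}_\om$, closing the loop. The key structural facts I would lean on are the equivalence of norms \eqref{nn21}, the Sobolev embedding Lemma~\ref{lem:Sobemb}, and the observation $\mc{G}_\om \subset \mc{A}_\om \subset \mc{K}_\om$ already noted in the text.

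For part (i), first I would record the elementary facts about $\mc{K}_\om$: for any $v \neq 0$ there is a unique $\lam(v) > 0$ with $\lam(v) v \in \mc{K}_\om$, namely $\lam(v)^{p-1} = \|v\|_{H_{\al,\om}^1}^2 / \|v\|_{L^{p+1}}^{p+1}$ (using $\|v\|_{L^{p+1}} \neq 0$, which follows since $H_\al^1 \hookrightarrow L^{p+1}$ and $v\neq0$ would otherwise force $\|v\|_{H_{\al,\om}^1}=0$); moreover on $\mc{K}_\om$ one has $S_\om(v) = \bigl(\tfrac12 - \tfrac{1}{p+1}\bigr)\|v\|_{H_{\al,\om}^1}^2$, so $d_\al(\om) > 0$ once we show $\inf_{\mc{K}_\om} \|v\|_{H_{\al,\om}^1} > 0$; the latter is immediate from $K_\om(v)=0$, Lemma~\ref{lem:Sobemb}, and $\|v\|_{H_{\al,\om}^1}^2 = \|v\|_{L^{p+1}}^{p+1} \le C\|v\|_{H_{\al,\om}^1}^{p+1}$ with $p+1>2$. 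Then I would take a minimizing sequence $(v_n) \subset \mc{K}_\om$; it is bounded in $H_\al^1$ by the previous identity, so along a subsequence $v_n \wto v$ weakly in $H_\al^1(\R^2)$. The delicate point is compactness: weak convergence alone gives $\|v\|_{H_{\al,\om}^1}^2 \le \liminf \|v_n\|_{H_{\al,\om}^1}^2$ but not strong $L^{p+1}$ convergence, since $\R^2$ is unbounded. I would handle this by passing to the radially symmetric class: using Theorem~\ref{thm:symmetryGS}-type rearrangement, or more directly by noting that if $v_n$ are chosen radial (which is possible since symmetrizing decreases $\|\cdot\|_{H_{\al,\om}^1}$ — via the decomposition $f + cG_\lam \mapsto f^* + |c|G_\lam$ and the results quoted from \cite{AL89}/\cite{BZ88} — while increasing $\|\cdot\|_{L^{p+1}}$, hence not increasing $S_\om$ after the Nehari rescaling), then the compact embedding $H_{\al,\rad}^1(\R^2) \hookrightarrow\hookrightarrow L^{p+1}(\R^2)$ (Strauss lemma, transported through the $H^1 + \mb{C}G_\lam$ decomposition) gives $v_n \to v$ strongly in $L^{p+1}$. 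Alternatively one avoids symmetrization and uses a concentration-compactness / Brezis--Lieb argument: the functional is translation-sensitive only through the $cG_\lam$ piece, and vanishing is excluded by the lower bound on $\|v_n\|_{L^{p+1}}$ while dichotomy is excluded by strict subadditivity of $d_\al$ — but I expect the radial route to be cleaner here. With strong $L^{p+1}$ convergence, $v \neq 0$ (its $L^{p+1}$ norm is bounded below), and $K_\om(v) \le \liminf K_\om(v_n) = 0$; if $K_\om(v) < 0$ I rescale $v$ down by some $\lam(v) \in (0,1)$ to land on $\mc{K}_\om$, getting $S_\om(\lam(v)v) = (\tfrac12-\tfrac1{p+1})\lam(v)^2\|v\|_{H_{\al,\om}^1}^2 \le (\tfrac12-\tfrac1{p+1})\liminf\|v_n\|_{H_{\al,\om}^1}^2 = d_\al(\om)$, whence $K_\om(v)=0$, $\lam(v)=1$, and $S_\om(v) = d_\al(\om)$; so $v \in \mc{M}_\om$.

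For part (ii), I would use a Lagrange multiplier argument: if $\phi \in \mc{M}_\om$ then $S_\om'(\phi) = \nu K_\om'(\phi)$ for some $\nu \in \R$; pairing with $\phi$ gives $0 = \langle S_\om'(\phi),\phi\rangle = \nu\langle K_\om'(\phi),\phi\rangle$, and since $\langle K_\om'(\phi),\phi\rangle = 2\|\phi\|_{H_{\al,\om}^1}^2 - (p+1)\|\phi\|_{L^{p+1}}^{p+1} = (1-p)\|\phi\|_{H_{\al,\om}^1}^2 < 0$ (using $K_\om(\phi)=0$ and $\phi\neq0$), we get $\nu = 0$, so $S_\om'(\phi) = 0$, i.e. $\phi \in \mc{A}_\om$. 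To upgrade $\phi \in \mc{A}_\om$ to $\phi \in \mc{G}_\om$: any $\psi \in \mc{A}_\om \subset \mc{K}_\om$ satisfies $S_\om(\psi) \ge d_\al(\om) = S_\om(\phi)$, which is exactly the ground-state inequality. This shows $\mc{M}_\om \subset \mc{G}_\om$ (and in particular $\mc{G}_\om\neq\emptyset$). For part (iii), take $\phi \in \mc{G}_\om$; then $\phi \in \mc{A}_\om \subset \mc{K}_\om$, so $S_\om(\phi) \ge d_\al(\om)$; conversely $d_\al(\om) = S_\om(v)$ for the minimizer $v \in \mc{M}_\om \subset \mc{A}_\om$ produced above, and since $\phi$ is a ground state $S_\om(\phi) \le S_\om(v) = d_\al(\om)$; hence $S_\om(\phi) = d_\al(\om)$ with $\phi \in \mc{K}_\om$, i.e. $\phi \in \mc{M}_\om$. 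Combining, $\mc{G}_\om = \mc{M}_\om \neq \emptyset$.

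The main obstacle is the compactness step in part (i): the failure of the embedding $H_\al^1(\R^2) \hookrightarrow L^{p+1}(\R^2)$ to be compact on all of $\R^2$, compounded by the fact that the singular part $cG_\lam$ does not commute with translations and must be tracked separately through the decomposition $v = f + cG_\lam$. The rearrangement argument needs the monotonicity properties of the symmetrization $f + cG_\lam \mapsto f^* + |c|G_\lam$ on both $\|\cdot\|_{H_{\al,\om}^1}$ and $\|\cdot\|_{L^{p+1}}$, which is precisely where the cited machinery of \cite{AL89} and \cite{BZ88} enters; care is needed because $G_\lam$ itself is already its own symmetric decreasing rearrangement, so the cross term $\Re\,c\,(f,G_\lam)_{L^2}$ in the expansion of $\|v\|_{L^2}^2$ behaves well under $f\mapsto f^*$, and the gradient term obeys $\|\nabla f^*\|_{L^2}\le\|\nabla f\|_{L^2}$ by Pólya--Szegő. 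Everything else — the Nehari rescaling bookkeeping, the lower bound $d_\al(\om)>0$, the Lagrange multiplier computation — is routine once the norm equivalence \eqref{nn21} and Lemma~\ref{lem:Sobemb} are in hand.
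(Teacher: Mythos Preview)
Your Lagrange-multiplier step (ii) and the inclusion (iii) are exactly the paper's Lemmas~\ref{lem:mg} and~\ref{lem:gm}, and your lower bound $d_\al(\om)>0$ is Lemma~\ref{lem:posimu}. The substantive difference is in the compactness step (i).

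You propose to restore compactness by symmetrizing the minimizing sequence via $f+cG_\om\mapsto f^*+|c|\,G_\om$ (P\'olya--Szeg\H{o} on $f$, \cite{AL89} on the $L^{p+1}$ norm), then rescaling back to $\mc{K}_\om$, and finally invoking the Strauss compact embedding $H^1_{\rad}\hookrightarrow\hookrightarrow L^{p+1}$ on the regular part together with $c_n\to c_0$. This is correct and self-contained; it essentially front-loads the rearrangement machinery that the paper only develops later in Section~\ref{sec:4}. The paper, by contrast, never symmetrizes at this stage. Its Lemma~\ref{lem:variconv} takes an arbitrary (not radial) sequence with $K_\om(v_n)\to0$, $S_\om(v_n)\to d_\al(\om)$, passes to a weak limit $v_0=f_0+c_0G_\om$, and uses the \emph{strict} inequality $d_\al(\om)<d_\infty(\om)$ (Lemma~\ref{lem:muineq}) to force $c_0\ne0$: if $c_0=0$ the sequence would yield a competitor for the free Nehari problem at level $d_\al(\om)<d_\infty(\om)$, which is impossible. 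Once $v_0\ne0$, a Brezis--Lieb splitting combined with Lemma~\ref{lem:knega} gives $K_\om(v_n-v_0)\ge0$, hence $K_\om(v_0)\le0$, and weak lower semicontinuity closes the argument with strong $H^1_\al$ convergence.

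What each route buys: your radial approach is conceptually simpler once the rearrangement inequalities are in hand, and it bypasses any comparison with the free problem. The paper's approach avoids symmetrization entirely and delivers two extra dividends that are used later: it shows directly that every minimizer has nontrivial singular part ($c_0\ne0$), a fact reused in Section~\ref{sec:4}; and the compactness lemma is proved for sequences with $K_\om(v_n)\to0$ (not only $=0$), which is precisely the form needed for Shatah's stability argument referenced in Proposition~\ref{prop:scstabinsta} (see the Remark following Lemma~\ref{lem:variconv}).
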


By using the functional $K_{\om}$, we can rewrite the action as
\begin{align}\label{eq:SHalK}
  S_{\om}(v)
  &=\frac{p-1}{2(p+1)}\|v\|_{H_{\al,\om}^1}^2
  +\frac{1}{p+1}K_{\om}(v)
\\ \label{eq:SLpK}
  &=\frac{p-1}{2(p+1)}\|v\|_{L^{p+1}}^{p+1}
  +\frac{1}{2}K_{\om}(v),
\end{align}
and $d_\al(\om)$ as
\begin{align}
  \label{eq:variH1al}
  d_\al(\om)
  &=\inf\left\{\frac{p-1}{2(p+1)}\|v\|_{H_{\al,\om}^1}^2\colon\, v\in\mc{K}_{\om}\right\}
\\\label{eq:variLp}
  &=\inf\left\{\frac{p-1}{2(p+1)}\|v\|_{L^{p+1}}^{p+1}\colon\,
  v\in\mc{K}_{\om}\right\}.
\end{align}

\begin{lemma}\label{lem:mg}
$\mc{M}_{\om}\subset \mc{G}_{\om}$.
\end{lemma}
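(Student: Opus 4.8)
The goal is to show $\mc{M}_{\om}\subset\mc{G}_{\om}$, i.e.\ every minimizer of the action on the Nehari manifold $\mc{K}_{\om}$ is in fact a ground state: it solves the stationary equation \eqref{eq:sp} and minimizes $S_{\om}$ over \emph{all} of $\mc{A}_{\om}$. Since we already know $\mc{G}_{\om}\subset\mc{A}_{\om}\subset\mc{K}_{\om}$, the action value $d_\al(\om)=\inf_{\mc{K}_{\om}}S_{\om}$ is automatically $\le S_{\om}(\psi)$ for every $\psi\in\mc{A}_{\om}$; so once we know a given $\phi\in\mc{M}_{\om}$ satisfies $S_{\om}'(\phi)=0$, it follows immediately that $\phi\in\mc{G}_{\om}$. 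Thus the entire content of the lemma is the Lagrange-multiplier step: \textbf{a minimizer of $S_{\om}$ constrained to $\mc{K}_{\om}$ is a free critical point of $S_{\om}$.}

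The plan is the standard argument. Let $\phi\in\mc{M}_{\om}$. By the Lagrange multiplier theorem, since $\phi$ minimizes $S_{\om}$ subject to $K_{\om}(\cdot)=0$, there is $\mu\in\R$ with $S_{\om}'(\phi)=\mu K_{\om}'(\phi)$ — provided $K_{\om}'(\phi)\ne0$, which I must check (this is the nondegeneracy of the constraint). Pairing with $\phi$ gives $\langle S_{\om}'(\phi),\phi\rangle=\mu\langle K_{\om}'(\phi),\phi\rangle$. The left side is $K_{\om}(\phi)=0$ because $\phi\in\mc{K}_{\om}$. For the right side I compute $\langle K_{\om}'(\phi),\phi\rangle$ explicitly from $K_{\om}(v)=\|v\|_{H_{\al,\om}^1}^2-\|v\|_{L^{p+1}}^{p+1}$: differentiating, $\langle K_{\om}'(\phi),\phi\rangle = 2\|\phi\|_{H_{\al,\om}^1}^2-(p+1)\|\phi\|_{L^{p+1}}^{p+1} = 2K_{\om}(\phi)-(p-1)\|\phi\|_{L^{p+1}}^{p+1} = -(p-1)\|\phi\|_{L^{p+1}}^{p+1}$. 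Since $\phi\ne0$ and $K_{\om}(\phi)=0$ forces $\|\phi\|_{L^{p+1}}^{p+1}=\|\phi\|_{H_{\al,\om}^1}^2>0$, this quantity is strictly negative. Hence $0=\mu\cdot\bigl(-(p-1)\|\phi\|_{L^{p+1}}^{p+1}\bigr)$ gives $\mu=0$, so $S_{\om}'(\phi)=0$, i.e.\ $\phi\in\mc{A}_{\om}$. Combined with $S_{\om}(\phi)=d_\al(\om)\le S_{\om}(\psi)$ for all $\psi\in\mc{A}_{\om}$, this yields $\phi\in\mc{G}_{\om}$.

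I should also record the companion fact that $\mc{M}_{\om}\ne\emptyset$ is not needed here (that is part of Proposition~\ref{prop:eg}, established separately); the lemma is purely the inclusion, which is vacuously true if $\mc{M}_{\om}=\emptyset$, so no existence input is required.

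The only genuine obstacle is verifying the hypothesis of the Lagrange multiplier theorem in the Banach-space setting $H_\al^1(\R^2)$: that $S_{\om}$ and $K_{\om}$ are $C^1$ on $H_\al^1(\R^2)$ (which follows from Lemma~\ref{lem:Sobemb}, giving the needed Sobolev embeddings so that $v\mapsto\|v\|_{L^{p+1}}^{p+1}$ is $C^1$), and that $K_{\om}'(\phi)\ne0$ at the minimizer. The latter is exactly the computation above: $\langle K_{\om}'(\phi),\phi\rangle=-(p-1)\|\phi\|_{L^{p+1}}^{p+1}\ne0$, so $K_{\om}'(\phi)$ is a nonzero functional and the constraint manifold is a genuine $C^1$-submanifold near $\phi$. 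Everything else is formal manipulation with the identities \eqref{eq:SHalK}--\eqref{eq:SLpK} already in the text. A short alternative that avoids invoking the abstract multiplier theorem: for $\psi\in\mc{A}_{\om}$ arbitrary, $K_{\om}(\psi)=\langle S_{\om}'(\psi),\psi\rangle=0$ so $\psi\in\mc{K}_{\om}$, whence $S_{\om}(\phi)=d_\al(\om)\le S_{\om}(\psi)$ — but this only shows the minimizing inequality, not that $\phi$ itself solves \eqref{eq:sp}; for that the multiplier argument (or the scaling/fibering map $\lam\mapsto S_{\om}(\lam\phi)$, whose maximum at $\lam=1$ on $\mc{K}_{\om}$ one can exploit) is unavoidable. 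I would present the multiplier version as the cleanest.
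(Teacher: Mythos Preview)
Your proposal is correct and follows essentially the same approach as the paper: the Lagrange multiplier argument, with the key computation $\langle K_{\om}'(\phi),\phi\rangle=-(p-1)\|\phi\|_{L^{p+1}}^{p+1}<0$ to show the multiplier vanishes, followed by the observation that $\mc{A}_{\om}\subset\mc{K}_{\om}$ gives minimality over $\mc{A}_{\om}$. Your additional remarks on the $C^1$ regularity of the functionals and the nondegeneracy of the constraint are sound elaborations of points the paper leaves implicit.
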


\begin{proof}
Let $\phi\in\mc{M}_{\om}$. By $K_{\om}(\phi)=0$, we have
\begin{equation} \label{eq:k'}
  \langle K_{\om}'(\phi),\phi\rangle
  =2\|\phi\|_{H_{\al,\om}^1}^2-(p+1)\|\phi\|_{L^{p+1}}^{p+1}
  =-(p-1)\|\phi\|_{L^{p+1}}^{p+1}
  <0. \end{equation}
Therefore, there exists a Lagrange multiplier $\eta\in\R$ such that $S_{\om}'(\phi)=\eta K_{\om}'(\phi)$. Moreover, since
\[\eta\langle K_{\om}'(\phi),\phi\rangle
   =\langle S_{\om}'(\phi),\phi\rangle
   =K_{\om}(\phi)=0, \]
it follows from \eqref{eq:k'} that $\eta=0$, which implies $S_{\om}'(\phi)=0$.

Furthermore, if $\psi\in\mc{A}_{\om}$, by $\phi\in\ml{M}_{\om}$ and $\psi\in\ml{K}_{\om}$, we have $S_{\om}(\phi)\le S_{\om}(\psi)$. Thus, we obtain $\phi\in\mc{G}_{\om}$. This completes the proof.
\end{proof}

\begin{lemma} \label{lem:knega}
If $v\in H_\al^1(\R^2)$ satisfies $K_{\om}(v)<0$, then
\begin{align*}
  &\frac{p-1}{2(p+1)}\|v\|_{L^{p+1}}^{p+1}>d_\al(\om),&
  &\frac{p-1}{2(p+1)}\|v\|_{H_{\al,\om}^1}^2>d_\al(\om).
\end{align*}
\end{lemma}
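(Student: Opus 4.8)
The plan is to reduce the first inequality to the variational characterization \eqref{eq:variLp} by deforming $v$ along the dilation $\lam\mapsto\lam v$ until it hits the Nehari manifold $\mc{K}_{\om}$, and then to obtain the second inequality from the first using the relation $K_{\om}(v)<0$. First I would study the function $j(\lam)\ce K_{\om}(\lam v)=\lam^2\|v\|_{H_{\al,\om}^1}^2-\lam^{p+1}\|v\|_{L^{p+1}}^{p+1}$ for $\lam>0$. Since $p>1$ and $v\ne 0$ (note $K_{\om}(v)<0$ forces $\|v\|_{L^{p+1}}\ne 0$, hence $v\ne 0$), the function $j$ is positive for small $\lam>0$, tends to $-\infty$ as $\lam\to\infty$, and $j(1)=K_{\om}(v)<0$; moreover $j(\lam)=\lam^2(\|v\|_{H_{\al,\om}^1}^2-\lam^{p-1}\|v\|_{L^{p+1}}^{p+1})$ shows $j$ has a unique positive zero $\lam_0\in(0,1)$, with $\lam_0=\left(\|v\|_{H_{\al,\om}^1}^2/\|v\|_{L^{p+1}}^{p+1}\right)^{1/(p-1)}$. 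Thus $\lam_0 v\in\mc{K}_{\om}$.

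Next, by the definition of $d_\al(\om)$ and the representation \eqref{eq:variLp},
\[
  d_\al(\om)\le\frac{p-1}{2(p+1)}\|\lam_0 v\|_{L^{p+1}}^{p+1}=\frac{p-1}{2(p+1)}\lam_0^{p+1}\|v\|_{L^{p+1}}^{p+1}.
\]
Since $\lam_0\in(0,1)$ we have $\lam_0^{p+1}<1$, so the right-hand side is strictly less than $\frac{p-1}{2(p+1)}\|v\|_{L^{p+1}}^{p+1}$, which gives the first claimed inequality. For the second, I would use $K_{\om}(v)<0$, i.e. $\|v\|_{H_{\al,\om}^1}^2<\|v\|_{L^{p+1}}^{p+1}$, so that
\[
  \frac{p-1}{2(p+1)}\|v\|_{H_{\al,\om}^1}^2<\frac{p-1}{2(p+1)}\|v\|_{L^{p+1}}^{p+1},
\]
but this only shows the $H^1_{\al,\om}$-quantity is smaller than the $L^{p+1}$-quantity, not that it exceeds $d_\al(\om)$. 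A cleaner route for the second inequality is to note that $\lam_0 v\in\mc{K}_{\om}$ together with \eqref{eq:variH1al} gives $d_\al(\om)\le\frac{p-1}{2(p+1)}\|\lam_0 v\|_{H_{\al,\om}^1}^2=\frac{p-1}{2(p+1)}\lam_0^2\|v\|_{H_{\al,\om}^1}^2<\frac{p-1}{2(p+1)}\|v\|_{H_{\al,\om}^1}^2$, again because $\lam_0<1$.

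The only genuinely non-routine point is verifying that $d_\al(\om)$ is attained, or at least positive and finite, so that these comparisons are meaningful; but that is exactly the content of the existence result Proposition~\ref{prop:eg} (or can be extracted from the coercivity in Lemma~\ref{lem:Sobemb} giving $d_\al(\om)>0$, and from the scaling structure giving $d_\al(\om)<\infty$), so I would simply invoke what is already available. The main obstacle, if any, is bookkeeping: one must make sure the dilation argument uses $v\ne 0$ (guaranteed by $K_{\om}(v)<0$) and that $\mc{K}_{\om}$ is nonempty so that the infima $d_\al(\om)$ are over nonempty sets; both are immediate here. Everything else is the elementary one-variable analysis of $j(\lam)$ sketched above.
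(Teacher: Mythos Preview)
Your argument is correct and matches the paper's proof exactly: both find $\lam_0\in(0,1)$ with $K_\om(\lam_0 v)=0$ via the one-variable function $j(\lam)$, and then use the characterizations \eqref{eq:variLp} and \eqref{eq:variH1al} together with $\lam_0<1$ to conclude. Your concerns about positivity or attainment of $d_\al(\om)$ are unnecessary here---the proof only needs that $\lam_0 v\in\mc{K}_\om$ so that the infimum defining $d_\al(\om)$ is bounded above by the relevant quantity, which you already have.
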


\begin{proof}
Let $v\in H_{\al}^1(\R^2)$ satisfy $K_{\om}(v)<0$. From the shape of the graph of the function $\lam\mapsto K_{\om}(\lam v)=\lam^2\|v\|_{H_{\al,\om}^1}^2-\lam^{p+1}\|v\|_{L^{p+1}}^{p+1}$, there exists $\lam_0\in(0,1)$ such that $K_{\om}(\lam_0 v)=0$. From \eqref{eq:variH1al}, we obtain
\[d_\al(\om)
  \le \frac{p-1}{2(p+1)}\|\lam_0 v\|_{L^{p+1}}^{p+1}
  =\frac{p-1}{2(p+1)}\lam_0^{p+1}\|v\|_{L^{p+1}}^{p+1}
  <\frac{p-1}{2(p+1)}\|v\|_{L^{p+1}}^{p+1}. \]
Similarly, by using \eqref{eq:variLp} we have $d_\al(\om)<\frac{p-1}{2(p+1)}\|v\|_{H_{\al,\om}^1}^2$.
\end{proof}

We note that from Lemma~\ref{lem:knega}, the expression~\eqref{eq:variH1al} can be written as
\begin{equation}\label{eq:variH1al2}
  d_\al(\om)
  =\inf\left\{\frac{p-1}{2(p+1)}\|v\|_{H_{\al,\om}^1}^2\colon\,
  v\in H_\al^1(\R^2),\ v\ne 0,\ K_{\om}(v)\le 0\right\}.
\end{equation}

\begin{lemma}\label{lem:posimu}
$d_{\al}(\om)>0$.
\end{lemma}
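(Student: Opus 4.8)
The plan is to derive a uniform positive lower bound on the $H^1_{\al,\om}$-norm of elements of the Nehari set $\mc{K}_{\al,\om}$, and then read off positivity of $d_\al(\om)$ from the variational characterization \eqref{eq:variH1al}. The only ingredient beyond elementary manipulations is the Sobolev inequality of Lemma~\ref{lem:Sobemb}, which is available since $\om > -e_\al$ and $p+1 \in [2,\infty)$ (here we use $p>1$).

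Concretely, I would first fix $v \in \mc{K}_{\al,\om}$, so that $v \ne 0$ and the constraint $K_\om(v)=0$ reads
\[
  \| v \|_{H_{\al,\om}^1}^2 = \| v \|_{L^{p+1}}^{p+1}.
\]
Applying Lemma~\ref{lem:Sobemb} with $q = p+1$ gives a constant $C = C(p,\om,\al) > 0$ with $\| v \|_{L^{p+1}} \le C \| v \|_{H_{\al,\om}^1}$, whence
\[
  \| v \|_{H_{\al,\om}^1}^2 = \| v \|_{L^{p+1}}^{p+1} \le C^{p+1} \| v \|_{H_{\al,\om}^1}^{p+1}.
\]
Since $v \ne 0$ we may divide by $\| v \|_{H_{\al,\om}^1}^2 > 0$ and use $p-1 > 0$ to conclude
\[
  \| v \|_{H_{\al,\om}^1}^2 \ge C^{-\frac{2(p+1)}{p-1}} =: \rho > 0,
\]
a bound independent of $v \in \mc{K}_{\al,\om}$.

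Finally, invoking \eqref{eq:variH1al} I would conclude
\[
  d_\al(\om) = \inf_{v \in \mc{K}_{\al,\om}} \frac{p-1}{2(p+1)} \| v \|_{H_{\al,\om}^1}^2 \ge \frac{p-1}{2(p+1)}\, \rho > 0
\]
(the inequality being vacuously true if $\mc{K}_{\al,\om} = \emptyset$, though nonemptiness is established later in Proposition~\ref{prop:eg}). There is no real obstacle here; the only point requiring a line of care is that the Sobolev constant in Lemma~\ref{lem:Sobemb} is genuinely uniform over $\mc{K}_{\al,\om}$, which is immediate since it depends only on $p$, $\om$, and $\al$. One could equally run the argument using \eqref{eq:variLp} and the lower bound on $\| v \|_{L^{p+1}}$ that follows from the same computation; I would present just the $H^1_{\al,\om}$ version for brevity.
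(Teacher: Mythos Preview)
Your proof is correct and follows essentially the same argument as the paper: the paper also takes $v\in\mc{K}_\om$, combines $K_\om(v)=0$ with the Sobolev embedding from Lemma~\ref{lem:Sobemb} to get $\|v\|_{H_{\al,\om}^1}^2\lesssim\|v\|_{H_{\al,\om}^1}^{p+1}$, deduces a uniform positive lower bound on $\|v\|_{H_{\al,\om}^1}$, and concludes via \eqref{eq:variH1al}. Your write-up simply makes the constants explicit where the paper uses $\lesssim$ and $\gtrsim$.
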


\begin{proof}
Let $v\in\mc{K}_{\om}$. From $K_{\om}(v)=0$ and the embedding $H_\al^1(\R^2)\hookrightarrow L^{p+1}(\R^2)$ (Lemma~\ref{lem:Sobemb}), we have
\begin{align*}
  \|v\|_{H_{\al,\om}^1}^2
  =\|v\|_{L^{p+1}}^{p+1}
  \lesssim \|v\|_{H_{\al,\om}^1}^{p+1}. \end{align*}
Since $v\ne0$ and $p>1$, we have the uniform bound $\|v\|_{H_{\al,\om}^1}\gtrsim 1$. From the expression \eqref{eq:variH1al} we obtain the conclusion.
\end{proof}

Now we use the action and Nehari functional without potential defined by
\begin{align*}
  S_{\infty,\om}(f)
  &\ce\frac12\|\nabla f\|_{L^2}^2
  +\frac{\om}{2}\|f\|_{L^2}^2
  -\frac{1}{p+1}\|f\|_{L^{p+1}}^{p+1},
\\K_{\infty,\om}(f)
  &\ce\|\nabla f\|_{L^2}^2
  +\om\|f\|_{L^2}^2
  -\|f\|_{L^{p+1}}^{p+1},
\end{align*}
respectively. We denote the minimal action value without potential by
\begin{align*}
  d_\infty(\om)
  \ce\min\{S_{\infty,\om}(v)\colon\,
  v\in H^1(\R^2),\ v\ne 0,\ K_{\infty,\om}(v)=0\}.
\end{align*}
It is known that there exists the unique positive radial ground state $\phi_{\infty,\om}\in H^1(\R^2)$ of the equation
\[-\Del\phi
  +\om\phi
  -|\phi|^{p-1}\phi
  =0,\quad
  x\in\R^2, \]
and that $\phi_{\infty,\om}$ satisfies $S_{\infty,\om}(\phi_{\infty,\om})=d_\infty(\om)$.

We note that
\begin{align*}
  &S_{\al,\om}(f)
  =S_{\infty,\om}(f),&
  &K_{\al,\om}(f)
  =K_{\infty,\om}(f)
\end{align*}
for all $f\in H^1(\R^2)$, which implies $d_\al(\om)\le d_\infty(\om)$. We prove the strict inequality.

\begin{lemma}\label{lem:muineq}
$d_{\al}(\om)<d_\infty(\om)$.
\end{lemma}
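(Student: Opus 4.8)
\textbf{Proof proposal for Lemma~\ref{lem:muineq}.}

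The plan is to exploit the inclusion $H^1(\R^2)\subset H^1_\al(\R^2)$ together with the fact that the quadratic form associated with $-\Del_\al$ is \emph{strictly smaller} than the free one on suitable perturbations by a multiple of $G_\om$. The strategy is the following. Start from the free positive radial ground state $\phi_{\infty,\om}\in H^1(\R^2)$, which realizes $d_\infty(\om)$ and satisfies $K_{\infty,\om}(\phi_{\infty,\om})=0$. Since $\al<\infty$, we are free to append a small singular piece: consider the one-parameter family $v_c\ce\phi_{\infty,\om}+cG_\om\in H^1_\al(\R^2)$ for $c\in\R$ small. Because $G_\om$ is radial, positive, and decaying, $v_c$ is an admissible competitor in the variational problem~\eqref{eq:variH1al2} once we check $K_\om(v_c)\le 0$ for an appropriate choice of $c$. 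Using the quadratic-form expression~\eqref{eq:qf}, we have
\[
\|v_c\|_{H^1_{\al,\om}}^2=\|\nabla\phi_{\infty,\om}\|_{L^2}^2+\om\|\phi_{\infty,\om}\|_{L^2}^2+\beta_\al(\om)c^2
=\|\phi_{\infty,\om}\|_{H^1_{\infty,\om}}^2+\beta_\al(\om)c^2,
\]
since the cross term $\langle(-\Del_\al+\om)\phi_{\infty,\om},cG_\om\rangle$ can be read off from~\eqref{eq:opaction} as $c\,\om\,(\phi_{\infty,\om},G_\om)_{L^2}$-type contributions that get absorbed — more carefully, one writes $v_c$ in the canonical decomposition $f+cG_\om$ with $f=\phi_{\infty,\om}$ and applies~\eqref{eq:qf} directly.

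The key computation is then to compare $K_\om(v_c)$ with $0$ to first order in $c$. We have
\[
K_\om(v_c)=\|v_c\|_{H^1_{\al,\om}}^2-\|v_c\|_{L^{p+1}}^{p+1}
=\underbrace{K_{\infty,\om}(\phi_{\infty,\om})}_{=0}+\beta_\al(\om)c^2-\bigl(\|\phi_{\infty,\om}+cG_\om\|_{L^{p+1}}^{p+1}-\|\phi_{\infty,\om}\|_{L^{p+1}}^{p+1}\bigr).
\]
Expanding the $L^{p+1}$ difference, the leading term is $(p+1)c\int_{\R^2}\phi_{\infty,\om}^{p}G_\om\,dx+O(c^2)$, and since $\phi_{\infty,\om}>0$ and $G_\om>0$ this integral is strictly positive; denote it $a>0$. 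Thus for $c>0$ small, $K_\om(v_c)=-(p+1)a\,c+O(c^2)<0$. So $v_c$ is admissible for~\eqref{eq:variH1al2}, and by Lemma~\ref{lem:knega} (applied via the $L^{p+1}$ form, or directly from~\eqref{eq:variH1al2}),
\[
d_\al(\om)\le\frac{p-1}{2(p+1)}\|v_c\|_{H^1_{\al,\om}}^2
=\frac{p-1}{2(p+1)}\Bigl(\|\phi_{\infty,\om}\|_{H^1_{\infty,\om}}^2+\beta_\al(\om)c^2\Bigr)
=d_\infty(\om)+\frac{(p-1)\beta_\al(\om)}{2(p+1)}c^2.
\]
This is the wrong direction — the $c^2$ term is positive when $\beta_\al(\om)>0$. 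The remedy is to additionally rescale: replace $v_c$ by $\lam(c)v_c$ where $\lam(c)\le 1$ is chosen so that $K_\om(\lam(c)v_c)=0$, i.e. project back onto the Nehari manifold. Then
\[
d_\al(\om)\le\frac{p-1}{2(p+1)}\|\lam(c)v_c\|_{L^{p+1}}^{p+1}=\frac{p-1}{2(p+1)}\lam(c)^{p+1}\|v_c\|_{L^{p+1}}^{p+1},
\]
and since $\lam(c)<1$ strictly (because $K_\om(v_c)<0$ forces the rescaling strictly below $1$), one needs $\lam(c)^{p+1}\|v_c\|_{L^{p+1}}^{p+1}<\|\phi_{\infty,\om}\|_{L^{p+1}}^{p+1}=\tfrac{2(p+1)}{p-1}d_\infty(\om)$. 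A cleaner route: use that on the Nehari manifold $S_\om(\lam(c)v_c)=\max_{\lam>0}S_\om(\lam v_c)$, and estimate $\max_{\lam>0}S_\om(\lam v_c)$ directly. Writing $g(\lam)=\tfrac{\lam^2}{2}A-\tfrac{\lam^{p+1}}{p+1}B$ with $A=\|v_c\|_{H^1_{\al,\om}}^2$, $B=\|v_c\|_{L^{p+1}}^{p+1}$, the max value is $\tfrac{p-1}{2(p+1)}A^{(p+1)/(p-1)}B^{-2/(p-1)}$. At $c=0$ this equals $d_\infty(\om)$; differentiating in $c$ at $c=0$, the $A$-derivative vanishes (it is $O(c)$ with $A'(0)=0$ since the cross term in~\eqref{eq:qf} is absent — $A(c)=\|\phi_{\infty,\om}\|^2_{H^1_{\infty,\om}}+\beta_\al(\om)c^2$), while $B'(0)=(p+1)a>0$, so $\tfrac{d}{dc}\bigl[\max_\lam S_\om(\lam v_c)\bigr]_{c=0}=-\tfrac{2}{p-1}\cdot\tfrac{p-1}{2(p+1)}A^{(p+1)/(p-1)}B^{-(p+1)/(p-1)}\cdot B'(0)<0$. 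Hence for $c>0$ small, $\max_\lam S_\om(\lam v_c)<d_\infty(\om)$, and since $\lam(c)v_c\in\mc{K}_\om$, we get $d_\al(\om)\le S_\om(\lam(c)v_c)<d_\infty(\om)$.

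\textbf{Main obstacle.} The delicate point is the bookkeeping of the quadratic form: one must verify that the cross term between the regular part $\phi_{\infty,\om}\in H^1(\R^2)$ and the singular part $cG_\om$ genuinely contributes \emph{no} linear-in-$c$ term to $\|v_c\|_{H^1_{\al,\om}}^2$, so that $A'(0)=0$ and the sign of the derivative is controlled purely by the (strictly positive, hence favorable) $L^{p+1}$ term $B'(0)=(p+1)\int\phi_{\infty,\om}^p G_\om>0$. This is where the precise definition~\eqref{eq:qf} of the form on $H^1_\al(\R^2)$ — in particular the fact that for $g=f+cG_\om$ the form is \emph{diagonal}, $\|\nabla f\|^2+\om\|f\|^2+\beta_\al(\om)|c|^2$, with no $f$–$c$ cross term — is essential and must be invoked carefully. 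The rest is a routine first-order expansion of $\lam\mapsto\max_\lam S_\om(\lam v_c)$. (Alternatively, if one prefers to avoid the max-over-$\lam$ argument, one can directly use $v_c$ with $c<0$: then $B'(0)c<0$ gives $K_\om(v_c)>0$, which is not admissible; so the sign $c>0$ is forced, and the Nehari rescaling is genuinely needed.)
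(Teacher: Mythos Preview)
Your argument is correct, but the paper takes a much shorter route. The paper argues by contradiction: since $S_{\al,\om}=S_{\infty,\om}$ and $K_{\al,\om}=K_{\infty,\om}$ on $H^1(\R^2)$, one already has $d_\al(\om)\le d_\infty(\om)$; if equality held, then $\phi_{\infty,\om}$ itself would lie in $\mc{M}_{\al,\om}$, hence by Lemma~\ref{lem:mg} it would solve \eqref{eq:sp} and therefore belong to $D(-\Del_\al)$. But a function in $D(-\Del_\al)\cap H^2(\R^2)$ has trivial singular part, which from \eqref{eq:op_dom} forces $\phi_{\infty,\om}(0)=0$, contradicting the positivity of $\phi_{\infty,\om}$.

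Your approach is a direct construction of a competitor $v_c=\phi_{\infty,\om}+cG_\om$ and a first-order computation of $c\mapsto\max_{\lam>0}S_\om(\lam v_c)$ at $c=0$; the essential inputs are the diagonality of the quadratic form \eqref{eq:qf} (so $A'(0)=0$) and the strict positivity of $\int\phi_{\infty,\om}^p G_\om$ (so $B'(0)>0$). This is perfectly valid, and in fact more quantitative: it exhibits explicitly \emph{why} the singular channel lowers the action, and the same calculation could in principle give a lower bound on $d_\infty(\om)-d_\al(\om)$. The paper's contradiction argument, by contrast, is slicker and uses only structural facts about $D(-\Del_\al)$, but gives no such information. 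Note that your initial detour (observing the naive bound from \eqref{eq:variH1al2} goes the wrong way) is unnecessary once you commit to the Nehari-max formulation; the write-up would be cleaner starting directly from $\max_\lam S_\om(\lam v_c)=\tfrac{p-1}{2(p+1)}A^{(p+1)/(p-1)}B^{-2/(p-1)}$.
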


\begin{proof}
Suppose that $d_{\al}(\om)=d_\infty(\om)$. Then we have
\begin{align*}
  &S_{\al,\om}(\phi_{\infty,\om})
  =S_{\infty,\om}(\phi_{\infty,\om})
  =d_\infty(\om)
  =d_{\al}(\om),\\
  &K_{\al,\om}(\phi_{\infty,\om})
  =K_{\infty,\om}(\phi_{\infty,\om})
  =0. \end{align*}
This yields $\phi_{\infty,\om}\in\mc{M}_{\al,\om}$. From Lemma~\ref{lem:mg}, $\phi_{\infty,\om}$ is also a solution of
\eqref{eq:sp}. This means that $\phi_{\infty,\om}\in D(-\Del_\al)\cap H^2(\R^2)$. From the definition of the domain $D(-\Del_\al)$ and the singularity of $G_\om$, we see that $\phi_{\infty,\om}(0)=0$, which contradicts the positivity of $\phi_{\infty,\om}$.
\end{proof}

\begin{lemma}\label{lem:variconv}
Let $(v_n)_{n\in\N}$ be a sequence in $H_\al^1(\R^2)$ satisfy
\begin{align*}
  &K_{\om}(v_n)\to0,&
  &S_{\om}(v_n)\to d_\al(\om).
\end{align*}
Then there exist $v_0\in H_\al^1(\R^2)\setminus\{0\}$ and a subsequence $(v_{n_j})_{j\in\N}$ of $(v_n)_{n\in\N}$ such that $v_{n_j}\to v_0$ in $H_\al^1(\R^2)$ as $j\to\infty$.
In particular, $v_0\in\mc{M}_{\om}$.
\end{lemma}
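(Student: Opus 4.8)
The plan is to prove Lemma~\ref{lem:variconv} by a concentration-compactness / compactness argument, exploiting that $d_\al(\om)<d_\infty(\om)$ (Lemma~\ref{lem:muineq}) to rule out the dichotomy in which the mass escapes to spatial infinity, where the interaction is invisible. First I would note that from \eqref{eq:SHalK} and $K_\om(v_n)\to0$, $S_\om(v_n)\to d_\al(\om)$, the sequence is bounded in $H_\al^1(\R^2)$, because $\frac{p-1}{2(p+1)}\|v_n\|_{H_{\al,\om}^1}^2 = S_\om(v_n)-\frac1{p+1}K_\om(v_n)\to d_\al(\om)$. Moreover, writing $v_n=f_n+c_nG_\om$ with $f_n\in H^1(\R^2)$, $c_n\in\mb C$, boundedness in $H_\al^1$ gives (via \eqref{nn21} and the discussion after \eqref{eq:qf}) that $(f_n)$ is bounded in $H^1(\R^2)$ and $(c_n)$ is bounded in $\mb C$. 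Passing to a subsequence, $f_n\wto f_0$ weakly in $H^1(\R^2)$ and $c_n\to c_0$; set $v_0\ce f_0+c_0G_\om\in H_\al^1(\R^2)$.

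The key step is to show $v_0\ne0$ and that the convergence is actually strong. I would first establish that $\|v_n\|_{L^{p+1}}^{p+1}$ stays bounded away from $0$: from $K_\om(v_n)\to0$, $\|v_n\|_{L^{p+1}}^{p+1} = \|v_n\|_{H_{\al,\om}^1}^2 + o(1)$, and since by \eqref{eq:variH1al} and Lemma~\ref{lem:posimu} every element of $\mc K_\om$ has $H_{\al,\om}^1$-norm bounded below, a short argument using the shape of $\lam\mapsto K_\om(\lam v_n)$ shows $\liminf\|v_n\|_{H_{\al,\om}^1}^2>0$, hence $\liminf\|v_n\|_{L^{p+1}}^{p+1}>0$. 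Now I split $v_n=f_n+c_nG_\om$: the piece $c_nG_\om$ is a fixed profile (up to the scalar $c_n$) that is compact in $L^{p+1}$ since $c_n\to c_0$ and $G_\om\in L^{p+1}(\R^2)$ by \eqref{eq.pprlm1}. For the regular part $f_n$, I would apply the standard argument: either $f_n\to f_0\ne0$ in $L^{p+1}_{\mathrm{loc}}$ after a translation-free localization, or $f_n$ vanishes (Lions' vanishing lemma: $\sup_{y}\int_{B(y,1)}|f_n|^2\to0$ forces $f_n\to0$ in $L^{p+1}$). If $f_n\to0$ in $L^{p+1}$, then $\|v_n\|_{L^{p+1}}^{p+1}\to|c_0|^p\cdot(\text{const})$ with $c_0\ne0$, and I must show this profile cannot carry the whole action — this is compatible with compactness since $G_\om$ is itself localized near the origin. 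If instead $f_n$ does not vanish, the delicate case is concentration of $f_n$ escaping to infinity: suppose $f_n(\cdot-y_n)\wto g\ne0$ with $|y_n|\to\infty$. Then the translated profile $g$ lives in $H^1(\R^2)$ and, because $G_\om(\cdot - y_n)\to 0$ in the relevant norms and $G_\om$ decays exponentially \eqref{eq:Gexpdecay}, a Brezis–Lieb splitting of both $\|\cdot\|_{H_{\al,\om}^1}^2$ and $\|\cdot\|_{L^{p+1}}^{p+1}$ would produce a nontrivial $H^1$-bubble $g$ with $K_{\infty,\om}(g)\le0$ contributing at least $d_\infty(\om)$ to the action, contradicting $S_\om(v_n)\to d_\al(\om)<d_\infty(\om)$ unless $g=0$; this forces $f_n$ to concentrate at a bounded location, giving, after passing to a subsequence, $f_n\to f_0$ in $L^{p+1}_{\mathrm{loc}}$ and hence (combined with boundedness and a standard tightness argument) $v_n\to v_0$ in $L^{p+1}(\R^2)$ with $v_0\ne0$.

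Once $v_n\to v_0$ strongly in $L^{p+1}$ with $v_0\ne0$, I would upgrade to strong $H_\al^1$ convergence and identify the limit. By weak lower semicontinuity of $\|\cdot\|_{H_{\al,\om}^1}^2$ (it is, by \eqref{eq:qf}, a sum of $\|\nabla f\|_{L^2}^2$, $\om\|f\|_{L^2}^2$, and $\beta_\al(\om)|c|^2$, each weakly lsc in $(f,c)$), we get $\|v_0\|_{H_{\al,\om}^1}^2\le\liminf\|v_n\|_{H_{\al,\om}^1}^2$, hence $K_\om(v_0)\le\liminf K_\om(v_n)=0$. If $K_\om(v_0)<0$, Lemma~\ref{lem:knega} gives $\frac{p-1}{2(p+1)}\|v_0\|_{L^{p+1}}^{p+1}>d_\al(\om)$, but strong $L^{p+1}$ convergence and \eqref{eq:SLpK} give $\frac{p-1}{2(p+1)}\|v_0\|_{L^{p+1}}^{p+1}=\lim\bigl(S_\om(v_n)-\tfrac12 K_\om(v_n)\bigr)=d_\al(\om)$, a contradiction; so $K_\om(v_0)=0$, i.e.\ $v_0\in\mc K_\om$. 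Then $S_\om(v_0)=\frac{p-1}{2(p+1)}\|v_0\|_{L^{p+1}}^{p+1}=d_\al(\om)$, so $v_0\in\mc M_\om$. Finally, $\|v_n\|_{H_{\al,\om}^1}^2=\|v_n\|_{L^{p+1}}^{p+1}+o(1)\to\|v_0\|_{L^{p+1}}^{p+1}=\|v_0\|_{H_{\al,\om}^1}^2$, and combined with weak convergence $f_n\wto f_0$, $c_n\to c_0$ in the Hilbert space $H_\al^1$ (with inner product inducing $\|\cdot\|_{H_{\al,\om}^1}$), norm convergence plus weak convergence yields $v_n\to v_0$ strongly in $H_\al^1(\R^2)$.

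I expect the main obstacle to be the exclusion of escaping-to-infinity concentration of the regular parts $f_n$, and the careful bookkeeping of how the singular profiles $c_nG_\om$ interact with translated bubbles in the Brezis–Lieb-type splitting. The crucial leverage is the strict inequality $d_\al(\om)<d_\infty(\om)$: any mass that runs off to infinity sees only the free operator $-\Del$ and therefore must pay the larger cost $d_\infty(\om)$, which the total budget $d_\al(\om)$ cannot afford. The exponential decay \eqref{eq:Gexpdecay} of $G_\om$ is what makes the splitting clean — translates $G_\om(\cdot-y_n)$ with $|y_n|\to\infty$ tend to $0$ in $L^2$ and $L^{p+1}$, so the singular part genuinely stays anchored at the origin.
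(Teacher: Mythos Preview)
Your overall strategy—use $d_\al(\om)<d_\infty(\om)$ to rule out loss of compactness—is the right one, but the paper executes it by a much shorter route that avoids concentration-compactness and Lions' lemma entirely. After extracting the weak limit $v_0=f_0+c_0G_\om$, the paper first shows directly that $c_0\ne0$: if $c_0=0$, then (since $c_nG_\om\to0$ in all relevant norms) the sequence $f_n$ itself satisfies $\frac{p-1}{2(p+1)}\|f_n\|_{H_\om^1}^2\to d_\al(\om)$ and $\frac{p-1}{2(p+1)}\|f_n\|_{L^{p+1}}^{p+1}\to d_\al(\om)$; rescaling by $\lam_n\to1$ so that $K_{\infty,\om}(\lam_nf_n)=0$ gives $d_\infty(\om)\le\lim\frac{p-1}{2(p+1)}\|\lam_nf_n\|_{L^{p+1}}^{p+1}=d_\al(\om)<d_\infty(\om)$, a contradiction. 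Once $c_0\ne0$ (hence $v_0\ne0$), only Brezis--Lieb is needed: from $\|v_n\|_{H_{\al,\om}^1}^2-\|v_n-v_0\|_{H_{\al,\om}^1}^2\to\|v_0\|_{H_{\al,\om}^1}^2>0$ one gets $\frac{p-1}{2(p+1)}\|v_n-v_0\|_{H_{\al,\om}^1}^2<d_\al(\om)$ eventually, so Lemma~\ref{lem:knega} forces $K_\om(v_n-v_0)\ge0$; Brezis--Lieb applied to $K_\om$ then yields $K_\om(v_0)\le0$, and \eqref{eq:variH1al2} together with weak lower semicontinuity squeezes $\|v_0\|_{H_{\al,\om}^1}^2$ to the limiting value, giving strong convergence. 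No translations, no profile decomposition, no tightness argument.

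Your concentration-compactness route is in principle viable, but as sketched it has a gap in the escaping-bubble step: you assert that a runaway profile $g$ must satisfy $K_{\infty,\om}(g)\le0$, yet the Brezis--Lieb splitting of $K_\om$ only gives $K_{\infty,\om}(g)+K_\om(w_n)\to0$ for the remaining piece $w_n$, and a priori one may have $K_{\infty,\om}(g)>0$ with $K_\om(w_n)<0$. That second case must be ruled out separately (for instance by applying Lemma~\ref{lem:knega} to $w_n$ and comparing the $H_{\al,\om}^1$-norm budget), and if several bubbles escape you would need to iterate. The paper's argument sidesteps all of this by using the strict inequality $d_\al(\om)<d_\infty(\om)$ only once, to pin down $c_0\ne0$, and then letting the singular part anchor compactness automatically through the pure Brezis--Lieb computation.
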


\begin{remark}
If we just prove the existence of the minimizer $v_0\in\ml{M}_\om$, it suffices to only consider a minimizing sequence $(v_n)_{n\in\N}$ for $d_\al(\om)$, that is, $K_{\om}(v_n)=0$ and $S_{\om}(v_n)\to d_\al(\om)$. However, we show the stronger statement in Lemma~\ref{lem:variconv} because it is used when we apply the argument of Shatah~\cite{S83} for stability in Proposition~\ref{prop:scstabinsta}.
\end{remark}

\begin{proof}[Proof of Lemma~\ref{lem:variconv}]
We decompose $v_n=f_n+c_n G_\om$. From the assumptions and the expressions \eqref{eq:SHalK} and \eqref{eq:SLpK}, we have
\begin{align}\label{eq:compactconv}
  \frac{p-1}{2(p+1)}\|v_n\|_{H_{\al,\om}^1}^2
  &\to d_\al(\om), &
  \frac{p-1}{2(p+1)}\|v_n\|_{L^{p+1}}^{p+1}
  &\to d_\al(\om).
\end{align}
This implies that $(v_n)_n$ is bounded in $H_\al^1(\R^2)$, and so there exists $v_0\in H_\al^1(\R^2)$ and a subsequence of $(v_n)$ such that $v_n\wto v_0$ weakly in $H_\al^1(\R^2)$. From the definition of $H_\al^1(\R^2)$, we see that $f_n\wto f_0$ weakly in $H^1(\R^2)$ and $c_n\to c_0$ for some $(f_0,c_0)\in H^1(\R^2)\times\mb{C}$.

Now we show that $c_0\ne0$. Suppose that $c_0=0$. Then by \eqref{eq:compactconv} we have
\begin{align}\label{eq:c=0conv}
  \frac{p-1}{2(p+1)}(\|\nabla f_n\|_{L^2}^2+\om\|f_n\|_{L^2}^2)
  &\to d_\al(\om), &
  \frac{p-1}{2(p+1)}\|f_n\|_{L^{p+1}}^{p+1}
  &\to d_\al(\om).
\end{align}
Let
\begin{align*}
  \lam_n
  \ce\l(\frac{\|\nabla f_n\|_{L^2}^2+\om\|f_n\|_{L^2}^2}{\|f_n\|_{L^{p+1}}^{p+1}}\r)^{1/(p-1)}. \end{align*}
We have $K_{\infty,\om}(\lam_nf_n)=0$. Moreover, \eqref{eq:c=0conv} and Lemma~\ref{lem:posimu} imply $\lam_n\to1$. From the definition of $d_\infty(\om)$ and Lemma~\ref{lem:muineq}, we obtain
\begin{align*}
  d_\infty(\om)
  \le\frac{p-1}{2(p+1)}\|\lam_nf_n\|_{L^{p+1}}^{p+1}
  &\to d_\al(\om)<d_\infty(\om). \end{align*}
This is a contradiction, which implies $c_0\ne0$.

We show the strong convergence.
By the Brezis--Lieb Lemma \cite{BL83}, we have
\begin{align}
\label{eq:BLl}
    \|v_n\|_{H_{\al,\om}^1}^2
    -\|v_n-v_0\|_{H_{\al,\om}^1}^2
  &\to \|v_0\|_{H_{\al,\om}^1}^2, \\
\label{eq:BLk}
    K_{\om}(v_n)
    -K_{\om}(v_n-v_0)
  &\to K_{\om}(v_0). \end{align}
Since $\|v_0\|_{H_{\al,\om}^1}>0$, by \eqref{eq:BLl}, we have
\begin{align*}
  \lim_{n\to\infty}\frac{p-1}{2(p+1)}\|v_n-v_0\|_{H_{\al,\om}^1}^2
  <\lim_{n\to\infty}\frac{p-1}{2(p+1)}\|v_n\|_{H_{\al,\om}^1}^2
  =d_\al(\om).
\end{align*}
From this inequality and Lemma~\ref{lem:knega}, we have $K_{\om}(v_n-v_0)>0$ for large $n$. Therefore, from $K_{\om}(v_n)\to0$ and \eqref{eq:BLk}, we obtain $K_{\om}(v_0)\le0$. Thus, from \eqref{eq:variH1al2}, the lower semicontinuity of norms, we deduce that
\begin{align*}
  d_\al(\om)
  \le\frac{p-1}{2(p+1)}\|v_0\|_{H_{\al,\om}^1}^2
  \le\frac{p-1}{2(p+1)}\liminf_{n\to\infty}\|v_n\|_{H_{\al,\om}^1}^2
  =d_\al(\om). \end{align*}
From \eqref{eq:BLl}, we have $\|v_n-v_0\|_{H_{\al,\om}^1}^2\to0$.
Therefore, $v_n\to v_0$ in $H_\al^1(\R^2)$.
This completes the proof.
\end{proof}

\begin{lemma}\label{lem:gm}
$\mc{G}_{\om}\subset \mc{M}_{\om}$.
\end{lemma}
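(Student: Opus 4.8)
The plan is to take an arbitrary ground state $\phi\in\mc{G}_{\om}$ and show it attains the minimal value $d_\al(\om)$ on the Nehari manifold $\mc{K}_{\om}$. Since $\mc{G}_{\om}\subset\mc{A}_{\om}\subset\mc{K}_{\om}$ (noted right after Proposition~\ref{prop:eg}), we already know $\phi\in\mc{K}_{\om}$, so the only thing to prove is $S_{\om}(\phi)=d_\al(\om)$. By Proposition~\ref{prop:eg} (or directly by Lemma~\ref{lem:variconv}) the set $\mc{M}_{\om}$ is nonempty; pick some $\psi\in\mc{M}_{\om}$. First I would invoke Lemma~\ref{lem:mg}, which gives $\mc{M}_{\om}\subset\mc{G}_{\om}$, so $\psi\in\mc{G}_{\om}$ as well. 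By the definition of $\mc{G}_{\om}$ (minimal-action solutions), every element of $\mc{G}_{\om}$ has the same action value; in particular $S_{\om}(\phi)\le S_{\om}(\psi)$ and $S_{\om}(\psi)\le S_{\om}(\phi)$, so $S_{\om}(\phi)=S_{\om}(\psi)=d_\al(\om)$, the last equality because $\psi\in\mc{M}_{\om}$. Hence $\phi\in\mc{K}_{\om}$ with $S_{\om}(\phi)=d_\al(\om)$, i.e.\ $\phi\in\mc{M}_{\om}$.

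The only subtle point — and the one I would be careful about — is the logical dependence: this argument uses that $\mc{M}_{\om}\ne\emptyset$, which is exactly the content of Lemma~\ref{lem:variconv} (applied, say, to a constant-in-$n$ minimizing sequence, or more honestly to any sequence with $K_{\om}(v_n)\to0$ and $S_{\om}(v_n)\to d_\al(\om)$, whose existence is immediate from the definition of the infimum $d_\al(\om)$). So I would state at the outset that Lemma~\ref{lem:variconv} furnishes some $v_0\in\mc{M}_{\om}$, and then run the comparison above. No new estimates, compactness, or decomposition into regular/singular parts are needed here; it is a pure bookkeeping argument combining Lemma~\ref{lem:mg}, Lemma~\ref{lem:variconv}, and the definitions of $\mc{G}_{\om}$ and $\mc{M}_{\om}$. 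Combined with Lemma~\ref{lem:mg} this will immediately yield the equality $\mc{G}_{\om}=\mc{M}_{\om}$ and, with Lemma~\ref{lem:variconv} for nonemptiness, complete the proof of Proposition~\ref{prop:eg}.

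There is essentially no main obstacle: the work has all been front-loaded into Lemma~\ref{lem:mg} (Lagrange-multiplier argument showing minimizers solve \eqref{eq:sp}) and Lemma~\ref{lem:variconv} (the compactness of minimizing sequences). The present lemma is the easy reverse inclusion, and the proof is two or three lines once those are in hand.
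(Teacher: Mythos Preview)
Your proposal is correct and follows exactly the same approach as the paper: take $\phi\in\mc{G}_{\om}\subset\mc{K}_{\om}$, use Lemma~\ref{lem:variconv} to produce some $\psi\in\mc{M}_{\om}$, apply Lemma~\ref{lem:mg} to get $\psi\in\mc{G}_{\om}$, and conclude $S_{\om}(\phi)=S_{\om}(\psi)=d_\al(\om)$. The only minor point is that you should not cite Proposition~\ref{prop:eg} for the nonemptiness of $\mc{M}_{\om}$, since that proposition is precisely what this lemma is helping to prove; but you already identify Lemma~\ref{lem:variconv} as the correct reference, so there is no real issue.
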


\begin{proof}
Let $\phi\in\mc{G}_{\om}$. Then $\phi\in\ml{K}_{\om}$. Since $\ml{M}_{\om}\ne\emptyset$ by Lemma~\ref{lem:variconv}, we can take $\psi\in\ml{M}_{\om}$. Moreover, by Lemma~\ref{lem:mg} we have $\psi\in\mc{G}_{\om}$. Therefore, for each $v\in\ml{K}_{\om}$ we obtain
\[S_{\om}(\phi)
  =S_{\om}(\psi)
  \le S_{\om}(v). \]
This implies $\phi\in\mc{M}_{\om}$. This completes the proof.
\end{proof}

\begin{proof}[Proof of Proposition~\ref{prop:eg}]
The assertion follows from Lemmas~\ref{lem:mg}, \ref{lem:variconv}, and \ref{lem:gm}.
\end{proof}

    \section{Symmetry of ground states}\label{sec:4}

In this section, we prove Theorem~\ref{thm:symmetryGS} based on the argument in \cite[Proof of Theorem~8.1.4]{C03} but need suitable modifications. We note that if $\phi\in \ml{A}_\om$, then we have
\[\|(-\Del_\al+\om)\phi\|_{L^2}
  \le \|\phi\|_{L^{2p}}^{p}
  \lesssim \|\phi\|_{H_{\al,\om}^1}^{p}, \]
which implies $\phi\in D(-\Del_\al)$. In particular, we can decompose $\phi=f+f(0)\beta_\al(\om)^{-1}G_\om$, and by \eqref{eq:opaction} and \eqref{eq:sp}, we have the relation
\begin{equation}\label{eq:spfphi}
  (-\Del+\om)f
  -|\phi|^{p-1}\phi
  =0.
\end{equation} Moreover, by the same argument in the proof of Lemma~\ref{lem:variconv}, we see that if $\phi=f+f(0)\beta_\al(\om)^{-1}G_\om\in\ml{G}_\om$, then $f(0)\ne0$.

\begin{lemma}\label{lem:scMom}
If $\psi\in H_{\al}^1(\R^2)$ satisfies
\begin{equation}\label{eq:HaldLp}
  \frac{p-1}{2(p+1)}\|\psi\|_{H_{\al,\om}^1}^2
  \le d_\al(\om)
  \le \frac{p-1}{2(p+1)}\|\psi\|_{L^{p+1}}^{p+1},
\end{equation}
then $\psi\in\ml{G}_\om$.
\end{lemma}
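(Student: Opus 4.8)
The plan is to show that the hypothesis \eqref{eq:HaldLp} forces $\psi$ to be, up to rescaling, a minimizer on the Nehari manifold, and then invoke the already-established identity $\mc{G}_\om=\mc{M}_\om$ from Proposition~\ref{prop:eg}. First I would examine the sign of $K_\om(\psi)=\|\psi\|_{H_{\al,\om}^1}^2-\|\psi\|_{L^{p+1}}^{p+1}$. Combining the two inequalities in \eqref{eq:HaldLp} gives
\[\frac{p-1}{2(p+1)}\|\psi\|_{H_{\al,\om}^1}^2\le d_\al(\om)\le\frac{p-1}{2(p+1)}\|\psi\|_{L^{p+1}}^{p+1},\]
hence $\|\psi\|_{H_{\al,\om}^1}^2\le\|\psi\|_{L^{p+1}}^{p+1}$, i.e. $K_\om(\psi)\le0$. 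Note also $\psi\ne0$, since otherwise the right inequality of \eqref{eq:HaldLp} would give $d_\al(\om)\le0$, contradicting Lemma~\ref{lem:posimu}.

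Next I would rescale. As in the proof of Lemma~\ref{lem:knega}, consider $\lam\mapsto K_\om(\lam\psi)=\lam^2\|\psi\|_{H_{\al,\om}^1}^2-\lam^{p+1}\|\psi\|_{L^{p+1}}^{p+1}$; since $K_\om(\psi)\le0$, there is $\lam_0\in(0,1]$ with $K_\om(\lam_0\psi)=0$, so $\lam_0\psi\in\mc{K}_\om$ and therefore $S_\om(\lam_0\psi)\ge d_\al(\om)$. Using the Nehari identity \eqref{eq:SHalK}, $S_\om(\lam_0\psi)=\frac{p-1}{2(p+1)}\|\lam_0\psi\|_{H_{\al,\om}^1}^2=\lam_0^2\cdot\frac{p-1}{2(p+1)}\|\psi\|_{H_{\al,\om}^1}^2\le\lam_0^2 d_\al(\om)$ by the left inequality in \eqref{eq:HaldLp}. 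Hence $d_\al(\om)\le\lam_0^2 d_\al(\om)$, and since $d_\al(\om)>0$ this forces $\lam_0=1$. Therefore $K_\om(\psi)=0$, so $\psi\in\mc{K}_\om$, and moreover $\frac{p-1}{2(p+1)}\|\psi\|_{H_{\al,\om}^1}^2\le d_\al(\om)$ together with the definition of $d_\al(\om)$ as an infimum over $\mc{K}_\om$ and the expression \eqref{eq:variH1al} gives equality; thus $S_\om(\psi)=\frac{p-1}{2(p+1)}\|\psi\|_{H_{\al,\om}^1}^2=d_\al(\om)$, i.e. $\psi\in\mc{M}_\om$. By Lemma~\ref{lem:mg} (or Proposition~\ref{prop:eg}), $\mc{M}_\om\subset\mc{G}_\om$, which completes the proof.

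There is no real obstacle here; the argument is entirely a matter of tracking the two inequalities through the rescaling and using $d_\al(\om)>0$. The only point that requires a moment's care is confirming $\psi\ne0$ before invoking the rescaling construction, and ensuring the rescaling parameter $\lam_0$ indeed lands in $(0,1]$ rather than being forced to $0$; both follow immediately from Lemma~\ref{lem:posimu} and the shape of the polynomial $\lam\mapsto K_\om(\lam\psi)$.
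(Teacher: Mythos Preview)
Your proof is correct and follows essentially the same approach as the paper's: both first observe $K_\om(\psi)\le 0$ from \eqref{eq:HaldLp}, then argue that in fact $K_\om(\psi)=0$ and $S_\om(\psi)=d_\al(\om)$, concluding via $\mc{M}_\om\subset\mc{G}_\om$. The only cosmetic difference is that the paper invokes Lemma~\ref{lem:knega} directly (its contrapositive rules out $K_\om(\psi)<0$), whereas you unpack that lemma's rescaling argument inline to force $\lam_0=1$.
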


\begin{proof}
From the assumption \eqref{eq:HaldLp}, we have $K_{\om}(\psi)\le 0$ and $S_\om(\psi)\le d_\al(\om)$. On the other hand, by the first inequality in \eqref{eq:HaldLp} and Lemma~\ref{lem:knega}, we have $K_{\om}(\psi)\ge0$. Thus, $K_{\om}(\psi)=0$. Moreover, by the definition of $d_\al(\om)$, we obtain $d_\al(\om)\le S_\om(\psi)$. Therefore,  $\psi\in\ml{M}_\om=\ml{G}_\om$.
\end{proof}

Throughout this section, we denote the Schwartz symmetrization of $f$ by $f^*$.

\begin{lemma}\label{lem:3.1}
Let $\phi=f+f(0)\beta_\al(\om)^{-1}G_\om\in\mc{G}_{\om}$. Then
\[\|\phi\|_{L^{p+1}}^{p+1}
  \le\left\|f^*+\frac{|f(0)|}{\beta_\al(\om)}G_\om\right\|_{L^{p+1}}^{p+1}. \]
\end{lemma}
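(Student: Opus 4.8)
The plan is to compare $\phi = f + f(0)\beta_\al(\om)^{-1}G_\om$ with the symmetrized competitor $\psi \ce f^* + |f(0)|\beta_\al(\om)^{-1}G_\om$ directly in the $L^{p+1}$-norm, exploiting the fact that $G_\om$ is already a positive, radial, decreasing function (by \eqref{eq:defGlambda}--\eqref{eq:Gexpdecay}), so that $G_\om = G_\om^*$, while $f^*$ is the standard Schwartz symmetrization of $f$ with $\|f^*\|_{L^q} = \|f\|_{L^q}$ for every $q$. The key point is a pointwise comparison: although symmetrization is not linear, it is well behaved on sums of a function with a fixed radially decreasing function. Concretely, I would invoke the rearrangement inequality for sums from Alvino--Lions--Trombetti or the results in \cite{AL89} (cited in the introduction precisely for this purpose): for measurable $f$ and the radially decreasing $G_\om$,
\[
\bigl\| |f| + G_\om \bigr\|_{L^{p+1}} \le \bigl\| f^* + G_\om \bigr\|_{L^{p+1}},
\]
and then combine with the trivial pointwise bound $\bigl|\phi\bigr| = \bigl| f + \tfrac{f(0)}{\beta_\al(\om)}G_\om \bigr| \le |f| + \tfrac{|f(0)|}{\beta_\al(\om)}G_\om$ (here one uses that $\beta_\al(\om) > 0$ since $\om > -e_\al$, so the coefficient is a positive multiple of a positive function). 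Monotonicity of the $L^{p+1}$-norm in the absolute value of the integrand then gives the claim.

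**In more detail**, the steps in order would be: (i) record that $\om > -e_\al$ forces $\beta_\al(\om) > 0$, so $c \ce f(0)\beta_\al(\om)^{-1}$ has $|c| = |f(0)|\beta_\al(\om)^{-1} \ge 0$; (ii) pointwise a.e.\ estimate $|\phi| \le |f| + |c|\, G_\om$, using $G_\om > 0$; (iii) apply the $L^{p+1}$-norm monotonicity to get $\|\phi\|_{L^{p+1}}^{p+1} \le \bigl\| |f| + |c|\,G_\om \bigr\|_{L^{p+1}}^{p+1}$; (iv) apply the rearrangement inequality for the sum of $|f|$ and the radially-decreasing-nonnegative function $|c|\,G_\om$ — noting $(|c|\,G_\om)^* = |c|\,G_\om$ and $|f|^* = f^*$ — to conclude $\bigl\| |f| + |c|\,G_\om \bigr\|_{L^{p+1}} \le \bigl\| f^* + |c|\,G_\om \bigr\|_{L^{p+1}}$; (v) recognize $|c|\,G_\om = \tfrac{|f(0)|}{\beta_\al(\om)}G_\om$ to obtain exactly the stated right-hand side. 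One should also check the competitor lies in $H_\al^1(\R^2)$: since $f \in H^2 \subset H^1$, we have $f^* \in H^1(\R^2)$ by the Pólya--Szegő inequality, hence $\psi \in H_\al^1(\R^2)$ by \eqref{eq:energyspace}.

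**The main obstacle** is step (iv): the clean statement "$\||f| + h\|_{L^q} \le \|f^* + h\|_{L^q}$ for $h$ radially symmetric and decreasing" is not the elementary Hardy--Littlewood inequality (which compares $\int f g$ with $\int f^* g^*$) and requires the more refined rearrangement results — precisely why the introduction flags \cite{AL89} as the technical tool for this symmetrization. I would make sure to cite that result in the exact form needed (a comparison of norms of sums, not just of products), and to verify its hypotheses (measurability, finiteness, the radial-monotone structure of $G_\om$, which is supplied by \eqref{eq:Gexpdecay}). Everything else — the positivity of $\beta_\al(\om)$, the pointwise bound, the norm monotonicity, and the fact that symmetrization fixes $G_\om$ and preserves $L^q$-norms of $f$ — is routine.
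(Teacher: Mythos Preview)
Your proposal is correct and follows essentially the same route as the paper: the paper also first uses the pointwise bound $|\phi|\le |f|+\tfrac{|f(0)|}{\beta_\al(\om)}G_\om$ and then applies \cite[Theorem~2.2]{AL89} to the supermodular integrand $F(g,h)=(g+h)^{p+1}$ (with $F_{gh}\ge 0$) together with $G_\om^*=G_\om$ to obtain the rearrangement inequality for the sum. Your step~(iv) is exactly this, and the hypotheses you flag (nonnegativity, finite superlevel sets of $|f|$ and $G_\om$) are precisely what the paper verifies; the extra $H_\al^1$ check you include is not needed for this lemma itself but is what the paper uses in the subsequent Lemma~\ref{lem:3.2}.
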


\begin{proof}
We note that
\begin{align*}
  \|\phi\|_{L^{p+1}}^{p+1}
  =\left\|f+\frac{f(0)}{\beta_\al(\om)}G_\om\right\|_{L^{p+1}}^{p+1}
  \le \left\||f|+\frac{|f(0)|}{\beta_\al(\om)}G_\om\right\|_{L^{p+1}}^{p+1}.
\end{align*}
After that, we only have to show that
\begin{equation}\label{eq:AL22}
  \left\||f|+\frac{|f(0)|}{\beta_\al(\om)}G_\om\right\|_{L^{p+1}}^{p+1}
  \le\left\|f^*+\frac{|f(0)|}{\beta_\al(\om)}G_\om\right\|_{L^{p+1}}^{p+1}.
\end{equation}

To prove \eqref{eq:AL22} we use \cite[Theorem~2.2]{AL89}. We denote
\[F(g,h)
  \ce (g+h)^{p+1} \]
for $g,h\in\R_+$. Then we have
\[F_{gh}(g,h)
  =p(p+1)(g+h)^{p-1}\ge 0 \]
for all $g,h\in\R_{\ge0}$. Therefore, from \cite[Theorem~2.2]{AL89}, we obtain
\begin{equation}\label{eq:FG}
  \int_{\R^2} F(g,h)\,dx\le \int_{\R^2} F(g^*,h^*)\,dx
\end{equation}
for all $g,h\in \ml{S}_0$. Here $\ml{S}_0$ is the set of all measurable functions $w\colon\R^2\to\R$ which satisfy
\begin{equation}\label{eq:asmpAL}
  w\ge 0,\quad
  \ml{L}^2(\{x\in\R^2\colon\, w(x)>a\}) < \infty\quad
  \text{for all $a > 0$},
\end{equation}
 where $\ml{L}^2(A)$ is the Lebesgue measure of the set $A$.

Moreover, we see that $|f|$ and $G_\om$ satisfy \eqref{eq:asmpAL} because $f\in H^2(\R^2)$ and $G_\om$ is decreasing. Thus, we can use the inequality \eqref{eq:FG} for $g=|f|$ and $h=|f(0)|\beta_\al(\om)^{-1}G_\om$. Since $G_\om^*=G_\om$, we obtain
\eqref{eq:AL22}. This completes the proof.
\end{proof}

\begin{lemma}\label{lem:3.2}
If $\phi=f+f(0)\beta_\al(\om)^{-1}G_\om\in\mc{G}_{\om}$, then $f^*+|f(0)|\beta_\al(\om)^{-1}G_\om\in\mc{G}_{\om}$.
\end{lemma}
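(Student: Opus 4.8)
The plan is to combine Lemma~\ref{lem:3.1} with a comparison of the $H_{\al,\om}^1$-norms of $\phi$ and its symmetrized version, and then invoke Lemma~\ref{lem:scMom}. Set $\psi \ce f^* + |f(0)|\beta_\al(\om)^{-1}G_\om \in H_\al^1(\R^2)$. By Lemma~\ref{lem:3.1} we already have $\|\phi\|_{L^{p+1}}^{p+1} \le \|\psi\|_{L^{p+1}}^{p+1}$, and since $\phi\in\ml{G}_\om=\ml{M}_\om$ we know $K_\om(\phi)=0$, i.e.\ $\frac{p-1}{2(p+1)}\|\phi\|_{L^{p+1}}^{p+1}=d_\al(\om)$ (using \eqref{eq:SLpK} and $S_\om(\phi)=d_\al(\om)$). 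Hence $\frac{p-1}{2(p+1)}\|\psi\|_{L^{p+1}}^{p+1}\ge d_\al(\om)$, which is the right-hand inequality in \eqref{eq:HaldLp}. So the crux is to prove the left-hand inequality
\[
  \|\psi\|_{H_{\al,\om}^1}^2 \le \|\phi\|_{H_{\al,\om}^1}^2,
\]
after which $\frac{p-1}{2(p+1)}\|\psi\|_{H_{\al,\om}^1}^2 \le \frac{p-1}{2(p+1)}\|\phi\|_{H_{\al,\om}^1}^2 = d_\al(\om)$ (again by $K_\om(\phi)=0$ and \eqref{eq:SHalK}), and Lemma~\ref{lem:scMom} gives $\psi\in\ml{G}_\om$, as desired.

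To establish the norm inequality I would use the explicit formula \eqref{eq:qf}: writing $g=f+cG_\om$ with $c=f(0)\beta_\al(\om)^{-1}$, one has $\|g\|_{H_{\al,\om}^1}^2 = \|\nabla f\|_{L^2}^2 + \om\|f\|_{L^2}^2 + \beta_\al(\om)|c|^2$. For $\psi$ the singular coefficient is $|f(0)|\beta_\al(\om)^{-1}$, which has the same modulus as $c$, so the $\beta_\al(\om)|c|^2$ term is unchanged. It therefore suffices to show $\|\nabla f^*\|_{L^2}^2 \le \|\nabla f\|_{L^2}^2$ and $\|f^*\|_{L^2}^2 \le \|f\|_{L^2}^2$ (in fact $=$). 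The $L^2$-norm is preserved by rearrangement, and the Dirichlet energy does not increase under Schwarz symmetrization by the Pólya--Szegő inequality; note $f\in H^1(\R^2)$ (indeed $H^2$), so $f^*\in H^1(\R^2)$ and these standard facts apply. One subtlety: $f$ is complex-valued, but $f^*$ is by convention the symmetrization of $|f|$, and $\|\nabla|f|\|_{L^2}\le\|\nabla f\|_{L^2}$ by the diamagnetic-type inequality for $H^1$ functions, so the chain $\|\nabla f^*\|_{L^2}\le\|\nabla|f|\|_{L^2}\le\|\nabla f\|_{L^2}$ closes. Putting these together yields $\|\psi\|_{H_{\al,\om}^1}^2\le\|\phi\|_{H_{\al,\om}^1}^2$.

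The main obstacle I anticipate is not any single deep estimate but the bookkeeping of the two competing directions: the nonlinear term wants to grow under symmetrization (Lemma~\ref{lem:3.1}) while the quadratic form wants to shrink, and one must be careful that the ground-state identity $K_\om(\phi)=0$ converts both of these into the precise inequalities \eqref{eq:HaldLp}. A second point to handle with care is that $\psi$ genuinely lies in $H_\al^1(\R^2)$ with the correct decomposition — i.e.\ that $f^*\in H^1(\R^2)$ and the singular part is $|f(0)|\beta_\al(\om)^{-1}G_\om$ — so that the quadratic-form formula \eqref{eq:qf} is legitimately applicable; this is where one uses $f\in H^2(\R^2)$, hence $|f(0)|$ makes sense and $f^*\in H^1$. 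Once the norm inequality and Lemma~\ref{lem:3.1} are in hand, the conclusion is immediate from Lemma~\ref{lem:scMom}.
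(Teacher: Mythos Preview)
Your proposal is correct and follows essentially the same route as the paper: compare $\|\psi\|_{H_{\al,\om}^1}^2\le\|\phi\|_{H_{\al,\om}^1}^2$ via P\'olya--Szeg\H{o} on the regular part (the singular coefficient being unchanged in modulus), combine with Lemma~\ref{lem:3.1} for the $L^{p+1}$ direction, and apply Lemma~\ref{lem:scMom}. The only addition you make beyond the paper's argument is the explicit handling of complex-valued $f$ through $\|\nabla f^*\|_{L^2}\le\|\nabla|f|\|_{L^2}\le\|\nabla f\|_{L^2}$, which is a harmless clarification.
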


\begin{proof}
Let $\psi\ce f^*+|f(0)|\beta_\al(\om)^{-1}G_\om$. We have
\begin{equation}\label{eq:psiL}
  \begin{aligned}
  \frac{p-1}{2(p+1)}\|\psi\|_{H_{\al,\om}^1}^2
  &=\frac{p-1}{2(p+1)}{\left(\|\nabla f^*\|_{L^2}^2
  +\om\| f^*\|_{L^2}^2
  +\frac{|f(0)|^2}{\beta_{\al}(\om)}\right)}
\\&\le\frac{p-1}{2(p+1)}{\left(\|\nabla f\|_{L^2}^2
  +\om\| f\|_{L^2}^2
  +\frac{|f(0)|^2}{\beta_{\al}(\om)}\right)}
\\&=\frac{p-1}{2(p+1)}\|\phi\|_{H_{\al,\om}^1}^2
  =d_\al(\om).
  \end{aligned}
\end{equation}
Moreover, from Lemma~\ref{lem:3.1}, we have
\[\frac{p-1}{2(p+1)}\|\psi\|_{L^{p+1}}^{p+1}
  \ge\frac{p-1}{2(p+1)}\|\phi\|_{L^{p+1}}^{p+1}
  =d_\al(\om). \]
Therefore, Lemma~\ref{lem:scMom} implies $\psi\in\mc{G}_{\om}$.
\end{proof}

\begin{lemma}\label{lem:posif}
If $\phi=f+f(0)\beta_\al(\om)^{-1}G_{\om}\in\mc{G}_{\om}$ and $f(0)>0$, then $f$ is a positive function.
\end{lemma}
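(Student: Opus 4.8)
The plan is to exploit the stationary equation \eqref{eq:spfphi} satisfied by the regular part $f$, together with a maximum-principle argument. Since $\phi = f + f(0)\beta_\al(\om)^{-1}G_\om \in \mc{G}_\om$, we know $\phi \in D(-\Del_\al)$ and $f \in H^2(\R^2)$ satisfies $(-\Del + \om) f = |\phi|^{p-1}\phi$. The idea is that $|\phi|^{p-1}\phi$ is not a priori of one sign, but we can still work with the representation $f = (-\Del+\om)^{-1}(|\phi|^{p-1}\phi) = G_\om * (|\phi|^{p-1}\phi)$.

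First I would argue that $\phi$ itself is nonnegative. By Lemma~\ref{lem:3.2}, the symmetrized function $\psi = f^* + |f(0)|\beta_\al(\om)^{-1}G_\om = f^* + f(0)\beta_\al(\om)^{-1}G_\om$ (using $f(0)>0$) also lies in $\mc{G}_\om$, hence is a solution of \eqref{eq:sp}, and $\psi \ge 0$ since $f^* \ge 0$, $f(0) > 0$, and $G_\om > 0$. Alternatively, and more directly, one can observe that replacing $\phi$ by $|\phi|$ does not increase the action: $\||\phi|\|_{H^1_{\al,\om}} \le \|\phi\|_{H^1_{\al,\om}}$ because passing to $|f|$ does not increase $\|\nabla f\|_{L^2}$ while $|f|(0) = f(0)$ (a.e.\ sense; $f$ is continuous and $f(0)>0$ so near the origin $|f| = f$), and $\||\phi|\|_{L^{p+1}} = \|\phi\|_{L^{p+1}}$; so $|\phi|$ is again a ground state solving \eqref{eq:sp} with regular part $|f|$, and then $\phi$ and $|\phi|$ solve the same semilinear equation. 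Either route yields that we may assume $\phi \ge 0$, hence $|\phi|^{p-1}\phi = \phi^p \ge 0$, and therefore $f = G_\om * \phi^p \ge 0$ as a convolution of nonnegative functions (here I use $G_\om > 0$ from \eqref{eq:defGlambda}).

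Next, to upgrade $f \ge 0$ to $f > 0$, I would use strict positivity of the heat kernel / the strong maximum principle for $-\Del + \om$. Concretely, $\phi$ is a nontrivial ground state, so $\phi \not\equiv 0$, hence $\phi^p \not\equiv 0$; then $f(x) = \int_{\R^2} G_\om(x-y)\phi(y)^p\,dy > 0$ for every $x$, since $G_\om(x-y) > 0$ for all $x \ne y$ and the integrand is nonnegative and not identically zero. This gives $f > 0$ everywhere, completing the proof.

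The main obstacle is the very first reduction: justifying that we may take $\phi \ge 0$, i.e., that $|\phi|$ is again a ground state and in particular lies in $D(-\Del_\al)$ with regular part $|f|$. The subtlety is that the decomposition $g = h + h(0)\beta_\al(\om)^{-1}G_\om$ must be respected, so one has to check that $|\phi| = |f| + |f|(0)\beta_\al(\om)^{-1}G_\om$, which uses $f(0) > 0$ and the continuity of $f$ to see that $|\phi|$ near $0$ equals $|f|$ plus the (positive) singular term, together with the fact (from Lemma~\ref{lem:3.2}-style reasoning, or a direct rearrangement estimate as in \cite{AL89}) that $\||f|\|_{H^1} \le \|f\|_{H^1}$ and $\||\phi|\|_{L^{p+1}} = \|\phi\|_{L^{p+1}}$, so that the pair $(|f|, f(0))$ satisfies \eqref{eq:HaldLp} and Lemma~\ref{lem:scMom} applies. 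Once $|\phi| \in \mc{G}_\om$ is in hand, the rest is a routine convolution positivity argument.
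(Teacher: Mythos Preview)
Your argument has a genuine gap precisely where you flag it. The identity $|\phi| = |f| + f(0)\beta_\al(\om)^{-1}G_\om$ is \emph{false}: taking the modulus of $\phi$ is not the same as taking the modulus of its regular part. Concretely, on the set where $f<0$ but $\phi=f+cG_\om\ge 0$ (with $c=f(0)\beta_\al(\om)^{-1}>0$), one has $|\phi|=f+cG_\om$ while $|f|+cG_\om=-f+cG_\om$, and these differ. For real $\phi$ the correct $H_\al^1$-regular part of $|\phi|$ is $f+2\phi^-$, not $|f|$, so your computation of $\||\phi|\|_{H^1_{\al,\om}}$ via $\|\nabla|f|\|_{L^2}$ is invalid. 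The alternative route through Lemma~\ref{lem:3.2} produces a \emph{different} ground state $\psi$ with regular part $f^*$; without uniqueness this does not by itself say anything about the original $f$, and in any case nothing in your argument excludes a nontrivial imaginary part of $f$ (the hypothesis $f(0)>0$ only forces $f(0)\in\R$).

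The paper avoids both pitfalls by choosing a competitor whose $H_\al^1$-decomposition is explicit \emph{by construction}: set $g=|\Re f|$, $h=|\Im f|$, and $\psi=(g+ih)+f(0)\beta_\al(\om)^{-1}G_\om$. Then the regular part of $\psi$ is exactly $g+ih$, so $\|\psi\|_{H^1_{\al,\om}}=\|\phi\|_{H^1_{\al,\om}}$ and $\|\psi\|_{L^{p+1}}\ge\|\phi\|_{L^{p+1}}$ are immediate, whence $\psi\in\mc{G}_\om$ by Lemma~\ref{lem:scMom}. From the equation for $\psi$ one splits off
\[
(-\Del+\om)g=|\psi|^{p-1}\bigl(g+f(0)\beta_\al(\om)^{-1}G_\om\bigr)\ge 0,\qquad
(-\Del+\om)h=|\psi|^{p-1}h,
\]
and the strong maximum principle yields $g>0$ and (using $h(0)=0$) $h\equiv 0$. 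Thus $\Im f\equiv 0$ and $|\Re f|>0$; continuity and $f(0)>0$ then force $f>0$. Your convolution/positivity step is fine, but it must be applied to $g$ coming from this explicit competitor, not to the incorrectly decomposed $|\phi|$.
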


\begin{proof}
Let
\begin{align*}
  &g\ce\lvert\Re f\rvert,&
  &h\ce\lvert\Im f\rvert,&
  &\psi\ce g+ih+\frac{f(0)}{\beta_\al(\om)}G_\om.
\end{align*}
Then we see that
\begin{align*}
  &\|\psi\|_{H_{\al,\om}^1}
  =\|\phi\|_{H_{\al,\om}^1},&
  &\|\psi\|_{L^{p+1}}
  \ge\|\phi\|_{L^{p+1}}.
\end{align*}
Therefore, Lemma~\ref{lem:scMom} implies $\psi\in\mc{G}_{\om}$. Since
\[\psi
  =g+ih
  +\frac{(g+ih)(0)}{\beta_\al(\om)}G_\om
  \in D(-\Del_\al), \]
we can use \eqref{eq:opaction} and obtain
\[(-\Del+\om)(g+ih)
  =(-\Del_\al+\om)\psi
  =|\psi|^{p-1}\psi
  =|\psi|^{p-1}\biggl(g+ih+\frac{f(0)}{\beta_\al(\om)}G_\om\biggr). \]
We decompose it into the real part and the imaginary part as
\begin{align}\label{eq:veq}
  (-\Del+\om)g
  &=|\psi|^{p-1}\biggl(g+\frac{f(0)}{\beta_\al(\om)}G_\om\biggr)
\\\label{eq:weq}
  (-\Del+\om)h
  &=|\psi|^{p-1}h.
\end{align}
Since each right-hand side of \eqref{eq:veq} and \eqref{eq:weq} is in $L^2(\R^2)$, we see that $g,h\in H^2(\R^2)\subset H^1(\R^2)\cap C(\R^2)$. Therefore, since $g(0)=f(0)>0$ and $g,G_\om\ge0$ in $\R^2$, applying the strong maximal principle (e.g., \cite[Theorem~3.1.2]{C18}) to the solution $g$ of \eqref{eq:veq}, we have $g>0$ in $\R^2$. Similarly, by using $h(0)=0$ and \eqref{eq:weq}, we obtain $h\equiv 0$ in $\R^2$.

From the continuity of $f$ and the positivity of $g$, we see that the sign of $f(x)$ does not depend on $x$. Therefore, by $f(0)>0$ we have $f>0$ in $\R^2$. This completes the proof.
\end{proof}

\begin{lemma}\label{lem:raddecf}
If $\phi=f+f(0)\beta_\al(\om)^{-1}G_{\om}\in\mc{G}_{\om}$ and $f$ is positive, then $f$ is a radial and strictly decreasing function.
\end{lemma}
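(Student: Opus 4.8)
The plan is to show that the positive regular part $f$ of a ground state satisfies the equation \eqref{eq:spfphi}, namely $(-\Del+\om)f = |\phi|^{p-1}\phi$ with $\phi = f + f(0)\beta_\al(\om)^{-1}G_\om$, and that both $f$ and $\phi$ are radial by another application of symmetric rearrangement, now using the strict-inequality case of the P\'olya--Szeg\H{o} inequality due to Brothers and Ziemer \cite{BZ88}. First I would set $\phi^* := f^* + f(0)\beta_\al(\om)^{-1}G_\om$ (legitimate since $f(0) > 0$ and $G_\om^* = G_\om$). By Lemma~\ref{lem:3.2} we know $f^* + f(0)\beta_\al(\om)^{-1}G_\om \in \ml{G}_\om$, so $\phi^*$ is itself a ground state, hence in $D(-\Del_\al)$, with regular part $f^*$ which must therefore be positive by Lemma~\ref{lem:posif}; in particular $f^* \in H^2(\R^2)$.

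Next I would compare the $H_{\al,\om}^1$-norms. From the variational identities \eqref{eq:psiL} we already have
\[
\|\nabla f^*\|_{L^2}^2 + \om\|f^*\|_{L^2}^2 \le \|\nabla f\|_{L^2}^2 + \om\|f\|_{L^2}^2,
\]
and since $\|f^*\|_{L^2} = \|f\|_{L^2}$ and $\|f^*\|_{L^{p+1}} = \|f\|_{L^{p+1}}$, the fact that $\phi^*$ is also a minimizer forces equality of the actions, hence $\|\nabla f^*\|_{L^2} = \|\nabla f\|_{L^2}$. At this point I would invoke the equality case in the P\'olya--Szeg\H{o} inequality: by \cite[Theorem~1.1]{BZ88}, since $|f| = f$ has a gradient that vanishes only on a set of measure zero (which follows because $f$ solves the elliptic equation \eqref{eq:spfphi} with $f > 0$, so $f$ is real-analytic away from the origin, or at least its level sets have measure zero — one checks $\ml{L}^2(\{\nabla f = 0\} \cap \{0 < f < \max f\}) = 0$), equality in $\|\nabla f^*\|_{L^2} = \|\nabla f\|_{L^2}$ implies that $f$ is, up to translation, equal to $f^*$, i.e. $f$ is radial and strictly decreasing about some point. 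Because $f$ solves \eqref{eq:spfphi} whose right-hand side involves $G_\om$, which is singular precisely at the origin, the translation must be trivial, so $f = f^*$ is radial and strictly decreasing about the origin.

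The main obstacle I anticipate is verifying the hypothesis of the Brothers--Ziemer equality theorem, namely the measure-zero condition on the critical set $\{\nabla f = 0\}$ within the range where $f$ is not at its extreme values. The clean way to get this is to note that from \eqref{eq:spfphi}, $f \in H^2(\R^2)$ and the right-hand side $|\phi|^{p-1}\phi$ is at least continuous (indeed $H^2 \hookrightarrow C$ handles $f$, and $G_\om$ is continuous away from $0$ and locally $L^q$ for all $q$), so by elliptic regularity and a bootstrap $f \in C^{1,\sigma}_{\mathrm{loc}}(\R^2 \setminus \{0\}) \cap C^2$ away from zero; combined with $f > 0$ and the unique continuation / analyticity-type properties of solutions to semilinear elliptic equations, one concludes the critical set has measure zero. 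An alternative, if one wants to avoid analyticity, is to cite the refinement of \cite{BZ88} that only requires $f \in W^{1,2}$ together with the regularity afforded by the equation, or to use the argument in \cite[Proof of Theorem~8.1.4]{C03} directly, which handles exactly this point. A second, minor technical point is justifying that equality of actions forces $\|\nabla f^*\| = \|\nabla f\|$ rather than merely $\le$: this is immediate since if the inequality were strict, $\phi^*$ would have strictly smaller action than the ground state $\phi$, contradicting minimality.

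\medskip
I would then close by recording the consequence for $\phi$ itself: since $f = f^*$ is radial and strictly decreasing and $G_\om$ is radial and strictly decreasing, $\phi = f + f(0)\beta_\al(\om)^{-1}G_\om$ is a sum of two positive, radial, strictly decreasing functions, hence positive, radial and strictly decreasing. Together with Lemma~\ref{lem:posif} and the passage $\phi \mapsto e^{i\theta}\phi$ used in Lemma~\ref{lem:3.2} (which reduces the general case $f(0) \ne 0$ to $f(0) > 0$ after multiplying by a unimodular constant), this completes the proof of Theorem~\ref{thm:symmetryGS}.
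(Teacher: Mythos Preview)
Your overall strategy---equality of gradients via minimality, then the Brothers--Ziemer equality case---matches the paper's. The difference is in how the Brothers--Ziemer hypothesis is verified, and here the paper's argument is both cleaner and avoids the obstacle you flag.

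You try to check the measure-zero condition on the critical set of $f$ itself, via elliptic regularity or analyticity of solutions to \eqref{eq:spfphi}. This is delicate: the right-hand side involves $G_\om$, which is singular at the origin, so real-analyticity is not available, and you would need a genuine unique-continuation or Sard-type argument to rule out fat level sets. (Note also that the hypothesis in \cite[Theorem~1.1]{BZ88} is stated for $f^*$, not $f$; your condition on $f$ does imply it, but only through the observation that $\nabla f = 0$ a.e.\ on each level set, hence measure-zero critical set forces all level sets to be null, hence $f^*$ has no flat parts---a step worth making explicit.)

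The paper sidesteps all of this by working with $f^*$ directly. Since $\psi \ce f^* + f(0)\beta_\al(\om)^{-1} G_\om$ is itself a ground state (your Lemma~\ref{lem:3.2} reference), its regular part $f^*$ satisfies the radial ODE
\[
  -f'' - \tfrac{1}{r} f' + \om f = \Bigl(f + \tfrac{f(0)}{\beta_\al(\om)} G_\om\Bigr)^p.
\]
If $f^*$ were constant on some interval $(r_1,r_2)$, the left side would be constant there while the right side is strictly monotone (because $G_\om$ is strictly decreasing and $f(0)>0$)---contradiction. Thus $f^*$ is strictly decreasing, and the Brothers--Ziemer hypothesis on $f^*$ is immediate. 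This is more robust than your regularity route and uses only what is already established.

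For pinning down the translation, the paper also argues more simply: from $\psi \in D(-\Del_\al)$ one reads off $f^*(0) = f(0)$, and since $f^*$ is strictly decreasing with unique maximum at the origin, $f(\cdot - y) = f^*$ together with $f(0) = f^*(0)$ forces $y = 0$. Your singularity-of-$G_\om$ heuristic is plausible but would need more work to make rigorous.
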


\begin{proof}
We denote
\[\psi
  \ce f^*+\frac{f(0)}{\beta_\al(\om)}G_\om. \]
From Lemma \ref{lem:3.2} we have $\psi\in\mc{G}_{\om}$. In particular, $\psi\in D(-\Del_\al)$ and so $f^*(0)=f(0)$. Moreover, since  $\|\psi\|_{H_{\al,\om}^1}^2=\|\phi\|_{H_{\al,\om}^1}^2$ by $\phi,\psi\in\ml{M}_\om$, we see that
\begin{equation}\label{eq:BZasmp1}
  \|\nabla f^*\|_{L^2}
  =\|\nabla f\|_{L^2}.
\end{equation}

Now we show that $f^*$ is strictly decreasing. Since $\psi=f^*+f^*(0)\beta_\al(\om)^{-1}G_\om$ is a positive radial solution of \eqref{eq:spfphi}, $f^*(r)$ satisfies
\begin{equation}\label{eq:f*ode}
  -f''
  -\frac{1}{r}f'
  +\om f
  =\left(f+\frac{f(0)}{\beta_\al(\om)}G_\om\right)^p,\quad
  r>0.
\end{equation}
Suppose that $f^*$ is not strictly decreasing. Then $f^*$ is constant in some interval $I=(r_1,r_2)$. From the equation \eqref{eq:f*ode}, $f^*$ satisfies
\[\om f^*
  =\left(f^*+\frac{f^*(0)}{\beta_\al(\om)}G_\om\right)^p\quad
  \text{in $(r_1,r_2)$}. \]
The left hand side is a constant whereas the right hand side is not a constant on $(r_1,r_2)$ since $G_\om$ is a strictly decreasing function and $f^*(0)=f(0)>0$. This is a contradiction. Thus, $f^*$ is strict decreasing.

Since $f^*(r)$ is  strictly decreasing, we see that the Lebesgue measure of the set
\begin{equation} \label{eq:BZasmp2}
  \{x\in\R^2\colon\,
  \nabla f^*(x)=0,\ 0<f^*(|x|)<\|f^*\|_{L^\infty}\}
\end{equation}
is zero. Therefore, combining \eqref{eq:BZasmp1}, we can use \cite[Theorem~1.1]{BZ88} to see that there exists $y\in\R^2$ such that $f(\cdot-y)=f^*$. Moreover, from $f(0)=f^*(0)$ we obtain $f=f^*$. This completes the proof.
\end{proof}

\begin{proof}[Proof of Theorem~\ref{thm:symmetryGS}]
Let $\phi=f+f(0)\beta_\al(\om)^{-1}G_\om\in\ml{G}_\om$. Since $f(0)\ne0$, there exists $\theta\in\R$ such that $e^{i\theta}f(0)>0$. By Lemmas \ref{lem:posif} and \ref{lem:raddecf} we see that $e^{i\theta}f$ is a positive, radial, and decreasing function. This completes the proof.
\end{proof}

    \section{Rescaled limit}\label{sec:5}

In this section, we prove that a family of rescaled positive ground states converges to the positive radial ground state of \eqref{eq:nonlineq}. The argument is based on \cite{FO03I}.

Let $\phi=f+f(0)\beta_\al(\om)^{-1}G_\om\in\mc{A}_{\om}$ and define the rescaling
\begin{align*}
  &\til\phi (x) \ce \om^{-1/(p-1)} \phi ( x / \sqrt{\om}),
\\&\til\al = \til\al (\om) \ce \al + \frac{1}{4\pi} \ln \om.
\end{align*}
Since
\[\begin{aligned}
  \beta_\al(\om)
  &=\al
  +\frac{\gamma}{2\pi}
  +\frac{1}{2\pi}\ln\frac{\sqrt{\om}}{2}
\\&=\al
  +\frac{1}{4\pi}\ln\om
  +\frac{\gamma}{2\pi}
  +\frac{1}{2\pi}\ln\frac{1}{2}
  =\beta_{\til\al}(1),
\\G_\om(\lam x)
  &=G_{\lam^2\om}(x)\quad
  \text{for $\lam>0$},
\end{aligned} \]
we have the decomposition
\begin{equation}\label{eq:expresstilphi}
  \til\phi
  =\til f
  +\frac{\til f(0)}{\beta_{\til\al}(1)}G_1
  \in D(-\Del_{\til\al}).
\end{equation}
From this expression, \eqref{eq:opaction}, and $f(x)=\om^{1/(p-1)}\til f(\sqrt{\om}\, x)$, we have the relations
\begin{align*}
  (-\Del_\al+\om)\phi
  &=(-\Del+\om)f
\\&=\om^{1/(p-1)}\om(-\Del+1)\til f(\sqrt{\om}\mathrel{\cdot})
\\&=\om^{p/(p-1)}(-\Del_{\til\al}+1)\til \phi(\sqrt{\om}\mathrel{\cdot}),
\\\om\phi
  &=\om^{p/(p-1)}\til{\phi}(\sqrt{\om}\mathrel{\cdot}),
\\|\phi|^{p-1}\phi
  &=\om^{p/(p-1)}|\til{\phi}(\sqrt{\om}\mathrel{\cdot})|^{p-1}\til{\phi}(\sqrt{\om}\mathrel{\cdot}).
\end{align*}
Therefore, $\til\phi$ is a solution of
\begin{equation}\label{eq:tilsp}\mathopen{}
  -\Del_{\til{\al}} \til\phi + \til\phi - |\til\phi|^{p-1} \til\phi = 0.
\end{equation}
Noting that $\beta_{\til\al}(1)=\beta_\al(\om)$, we denote
\[\|v\|_{\til H_{\al,\om}^1}
  \ce\|v\|_{H_{\til\al(\om),1}^1}
  =\sqrt{\|f\|_{H^1}^2
  +\beta_{\al}(\om)|c|^2}\quad
  \text{for } v=f+cG_1\in H_\al^1(\R^2). \]
The action and the Nehari functional corresponding to \eqref{eq:tilsp} are given by
\begin{align*}
  &\til S_{\al,\om}(v)
  =\frac12\|v\|_{\til H_{\al,\om}^1}^2
  -\frac{1}{p+1}\|v\|_{L^{p+1}}^{p+1},&
  &\til K_{\al,\om}(v)
  =\|v\|_{\til H_{\al,\om}^1}^2
  -\|v\|_{L^{p+1}}^{p+1}
\end{align*}
for $v\in H_\al^1(\R^2)$, respectively. The action and the Nehari functional corresponding to the limit equation \eqref{eq:tilsp} are given by
\begin{align*}
  &\til S_{\infty}(v)
  =\frac12\|v\|_{H^1}^2
  -\frac{1}{p+1}\|v\|_{L^{p+1}}^{p+1},&
  &\til K_{\infty}(v)
  =\|v\|_{H^1}^2
  -\|v\|_{L^{p+1}}^{p+1}
\end{align*}
for $v\in H^1(\R^2)$, respectively. Note that since
\[e_{\til\al}
  =e_{\al}\om^{-1}, \]
we have
\[\om>-e_{\al}
  \iff 1>-e_{\til\al(\om)}. \]

In what follow, we only consider the ground state with the positive regular part:
\begin{equation}\label{eq:posigs}
  \phi_{\om}
  =f_\om+\frac{f_\om(0)}{\beta_{\al}(\om)}G_\om\in\mc{G}_{\om},\quad
  f_\om>0,\quad
  f_\om\in H_{\rad}^2(\R^2).
\end{equation}
Note that from Theorem~\ref{thm:symmetryGS}, for any ground state $\psi_\om\in\ml{G}_{\om}$ there exists $\theta\in\R$ such that $\phi_{\om}\ce e^{i\theta}\psi_\om$ satisfies \eqref{eq:posigs}.

\begin{remark}
If $\phi$ is decompose as $\phi=f+f(0)\beta_\al(\om)^{-1}G_\om$, then it is natural to decompose $\til\phi$ as \eqref{eq:expresstilphi}. However, since $\beta_{\til\al} (1)=\beta_\al(\om)$, for simplicity of notations, we always decompose it as
\[\til\phi
  =\til f+\frac{\til f(0)}{\beta_\al(\om)}G_1. \]
\end{remark}

We denote
\[\begin{aligned}
  \til d_\al(\om)
  &\ce\til S_{\al,\om}(\til{\phi}_{\om}),
\\\til d(\infty)
  &\ce \til S_{\infty}(\til{\phi}_{\infty})
  =\frac{p-1}{2(p+1)}\|\til\phi_\infty\|_{L^{p+1}}^{p+1},
  \end{aligned} \]
where $\til{\phi}_{\infty}$ is the unique positive radial solution of \eqref{eq:nonlineq}.

\begin{proposition}\label{lem:4.1}
Let $(\phi_{\om})_{\om>-e_\al}$ be a family of ground states for \eqref{eq:sp} satisfying \eqref{eq:posigs}. Then
\begin{equation}\label{eq:convom}
  \til{f}_{\om}
  \to \til{\phi}_{\infty}\quad
  \text{in $H^2(\R^2)$ as $\om\to\infty$}.
\end{equation}
In particular, $\til{\phi}_{\om}\to\til{\phi}_{\infty}$ in $H_\al^1(\R^2)$ as $\om\to\infty$.
\end{proposition}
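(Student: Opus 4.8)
The plan is to establish the convergence \eqref{eq:convom} by a compactness argument, exploiting that the rescaled ground states $\til\phi_\om$ solve the $\om$-independent limiting equation \eqref{eq:tilsp} with $\til\al = \til\al(\om) \to \infty$, and that in the limit $\til\al = \infty$ the interaction disappears, leaving \eqref{eq:nonlineq}. First I would record the key structural facts: since $\phi_\om \in \mc{G}_\om$, the rescaled function $\til\phi_\om$ is a ground state for $\til S_{\al,\om}$ on the corresponding Nehari set, and $\til\phi_\om = \til f_\om + \til f_\om(0)\beta_\al(\om)^{-1} G_1$ with $\til f_\om > 0$ radial. Because $\beta_\al(\om) = \tfrac{1}{4\pi}\ln(\om/(-e_\al)) \to \infty$ as $\om \to \infty$, the energy identity (the analogues of \eqref{eq:SHalK}--\eqref{eq:SLpK} for $\til S_{\al,\om}$) together with the bound $\til d_\al(\om) \le \til d(\infty)$ — which follows since $H^1(\R^2) \subset H^1_\al(\R^2)$ and $\til S_{\al,\om} = \til S_\infty$ on $H^1(\R^2)$, so testing with $\til\phi_\infty$ (after the scalar adjustment to land on the Nehari set) gives an upper bound — yields that $\|\til f_\om\|_{H^1}$ is bounded and, crucially, that $\til f_\om(0)^2/\beta_\al(\om)$ is bounded, hence $\til f_\om(0)^2 = O(\beta_\al(\om))$; but one needs more, namely that the singular coefficient $c_\om \ce \til f_\om(0)/\beta_\al(\om) \to 0$.

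The core of the argument is then: (i) extract a weak limit $\til f_\om \wto \til f_\infty$ in $H^1_{\mathrm{rad}}(\R^2)$ and $c_\om \to c_\infty$ along a subsequence; (ii) show $c_\infty = 0$, so the singular part disappears in the limit; (iii) identify $\til f_\infty$ as a nonzero solution of \eqref{eq:nonlineq}; (iv) upgrade to strong $H^1$ and then $H^2$ convergence of the full sequence. For (ii), from $\til f_\om(0) = \beta_\al(\om) c_\om$ and the boundedness $\til f_\om(0)^2 \lesssim \beta_\al(\om) + \|\til f_\om\|_{H^1}^2$ (coming from $\til K_{\al,\om}(\til\phi_\om)=0$ and Lemma~\ref{lem:Sobemb}), one gets $|c_\om| \lesssim \beta_\al(\om)^{-1/2} \to 0$, hence $c_\infty = 0$; moreover $\til f_\om(0)$ stays bounded since $H^2$ control (see below) gives $\|\til f_\om\|_{L^\infty} \lesssim \|\til f_\om\|_{H^2}$, which one bootstraps from \eqref{eq:tilsp} once the right-hand side is shown bounded in $L^2$ via $\|\til\phi_\om\|_{L^{2p}}^p \lesssim \|\til\phi_\om\|_{H^1_{\til\al,\om}}^p$. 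For (iii), passing to the limit in the equation $(-\Del+1)\til f_\om = |\til\phi_\om|^{p-1}\til\phi_\om$ (the analogue of \eqref{eq:spfphi}): since $c_\om \to 0$ and $\til f_\om(0)$ is bounded, the singular term $c_\om G_1 \to 0$ in $L^q$ for every finite $q$, and by the compact embedding $H^1_{\mathrm{rad}}(\R^2) \hookrightarrow L^{p+1}(\R^2)$ together with a Strauss-type argument one obtains $|\til\phi_\om|^{p-1}\til\phi_\om \to |\til f_\infty|^{p-1}\til f_\infty$ strongly in $L^{(p+1)/p}$, so $\til f_\infty$ solves \eqref{eq:nonlineq} weakly; it is nonzero because otherwise $\til d_\al(\om) \to 0$, contradicting $\til d_\al(\om) \ge$ a positive constant (the rescaled version of Lemma~\ref{lem:posimu}, using $\beta_\al(\om) \to \infty$ only makes the coercivity constant better). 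Being a nonnegative nontrivial radial solution, $\til f_\infty$ is, up to the uniqueness for \eqref{eq:nonlineq}, equal to $\til\phi_\infty$; to pin this down one checks that $\til f_\infty$ is in fact a ground state for \eqref{eq:nonlineq} by lower semicontinuity of $\til S_\infty$ against the upper bound $\til d_\al(\om) \to \til d(\infty)$.

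Finally, for the strong convergence (iv): from $\til K_{\al,\om}(\til\phi_\om) = 0$ and $\til S_{\al,\om}(\til\phi_\om) \to \til d(\infty) = \til S_\infty(\til\phi_\infty)$ one gets $\|\til\phi_\om\|_{H^1_{\til\al,\om}}^2 \to \|\til\phi_\infty\|_{H^1}^2$, i.e. $\|\til f_\om\|_{H^1}^2 + \beta_\al(\om)c_\om^2 \to \|\til\phi_\infty\|_{H^1}^2$; combined with $\beta_\al(\om)c_\om^2 = \til f_\om(0)^2/\beta_\al(\om) \to 0$ (bounded numerator, diverging denominator) this gives $\|\til f_\om\|_{H^1} \to \|\til\phi_\infty\|_{H^1}$, which with weak convergence upgrades to strong $H^1$ convergence $\til f_\om \to \til\phi_\infty$; in particular $\til f_\om(0) \to \til\phi_\infty(0)$. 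Then a standard elliptic bootstrap on $(-\Del+1)\til f_\om = (\til f_\om + c_\om G_1)^p$ — the right-hand side now converges in $L^2$ since $c_\om G_1 \to 0$ in every $L^q$, $q<\infty$, and $\til f_\om^p \to \til\phi_\infty^p$ in $L^2$ by the $H^1 \hookrightarrow L^{2p}$ embedding and dominated convergence — gives $\til f_\om \to \til\phi_\infty$ in $H^2(\R^2)$, which is \eqref{eq:convom}; the last sentence $\til\phi_\om \to \til\phi_\infty$ in $H^1_\al(\R^2)$ then follows from $c_\om \to 0$ and the definition of the $H^1_\al$ norm. Since every subsequence has a further subsequence converging to the same limit $\til\phi_\infty$, the whole family converges. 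The main obstacle I anticipate is item (ii)–(iii): controlling the boundary value $\til f_\om(0)$ uniformly — it is not immediate from the $H^1_{\til\al,\om}$ bound alone, and requires the elliptic regularity bootstrap and the Sobolev embedding $H^2 \hookrightarrow L^\infty$, so the logical order must be arranged carefully to avoid circularity (first get $L^2$-bounds on the nonlinear right-hand side from $H^1$ control, then $H^2$-bounds, then the $L^\infty$-bound on $\til f_\om$, then $c_\om \to 0$).
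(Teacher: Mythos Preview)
Your proposal is correct and follows the same overall compactness--identification--upgrade scheme as the paper (uniform bounds from $\til d_\al(\om)\le\til d(\infty)$, weak $H^1$ limit plus radial compactness for $L^q$ convergence, identification of the limit via the equation and uniqueness, subsequence argument at the end). The differences are in execution. First, for the identification step you add a variational verification that $\til f_\infty$ is a ground state; the paper simply invokes uniqueness of the nonnegative radial decreasing solution of \eqref{eq:nonlineq}, which suffices. Second, and more substantially, your strong-convergence step is more circuitous: you pass through $H^1$ norm convergence (needing $\til d_\al(\om)\to\til d(\infty)$ and $\til f_\om(0)^2/\beta_\al(\om)\to 0$, the latter requiring the $H^2$ bootstrap you flag as the main obstacle) before bootstrapping to $H^2$. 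The paper short-circuits all of this: once $\til\phi_{\om_j}\to\til\phi_\infty$ in every $L^q$, $q>2$, the equation $(-\Del+1)(\til f_{\om_j}-\til\phi_\infty)=\til\phi_{\om_j}^p-\til\phi_\infty^p$ yields directly
\[
\|\til f_{\om_j}-\til\phi_\infty\|_{H^2}
\;\lesssim\;\bigl(\|\til\phi_{\om_j}\|_{L^{2(p+1)}}^{p-1}+\|\til\phi_\infty\|_{L^{2(p+1)}}^{p-1}\bigr)\,\|\til\phi_{\om_j}-\til\phi_\infty\|_{L^{p+1}}\;\longrightarrow\;0,
\]
so no intermediate $H^1$ step and no separate uniform control of $\til f_\om(0)$ are needed --- the ``logical-order'' worry you raise simply does not arise. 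Your argument is sound, but the paper's is shorter.
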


\begin{proof}
We divide the proof into several steps.


\begin{enumerate}[nosep, leftmargin=0pt, itemindent=*, parsep=0pt, label=\textbf{Step~\arabic*.}]
\item $\om<\om'<\infty\implies\til d_\al(\om)<\til d_\al(\om')\le\til d(\infty)$.

Since
\[\til K_{\om}(\til\phi_{\om'})
  <\til K_{\om'}(\til\phi_{\om'})
  =0, \]
by Lemma~\ref{lem:knega} we have
\[\til d_\al(\om)
  <\frac{p-1}{2(p+1)}\|\til\phi_{\om'}\|_{L^{p+1}}^{p+1}
  =\til d(\om'). \]

Similarly, since
\[\til K_{\om}(\til\phi_{\infty})
  =\til K_{\infty}(\til\phi_{\infty})
  =0,  \]
we obtain
\[\til d_\al(\om)
  \le\frac{p-1}{2(p+1)}\|\til \phi_{\infty}\|_{L^{p+1}}^{p+1}
  =\til d(\infty). \]

\item $\sup_{\om>-e_\al}\|\til\phi_{\om}\|_{\til H_{\al,\om}^1}<\infty$ and $\inf_{\om>1-e_\al}\|\til\phi_{\om}\|_{L^{p+1}}>0$.

This follows from the expression
\[\til d_\al(\om)
  =\frac{p-1}{2(p+1)}\|\til\phi_{\om}\|_{\til H_{\om}^1}^2
  =\frac{p-1}{2(p+1)}\|\til\phi_{\om}\|_{L^{p+1}}^{p+1} \]
and Step~1.

\item Weak convergence to a positive radial function.

If we decompose $\til{\phi}_{\om}=\til{f}_{\om}+\til{f}_{\om}(0)\beta_{\al}(\om)^{-1}G_1$, then by Step 2, there exist nonnegative radial function $f_\infty\in H^1(\R^2)$, a constant $c_\infty\in\R$, and a subsequence $(\om_j)_{j\in\N}$ such that $\om_j\to\infty$ and
\begin{align}
  \label{eq:tilfomweakconv}
  &\til f_{\om_j}
  \wto \til f_\infty\quad
  \text{weakly in }H^1(\R^2),
\\\label{eq:tilfomcinfconv}
  &\frac{\til f_{\om_j}(0)^2}{\beta_{\al}(\om_j)}
  \to c_\infty
\end{align}
as $j\to\infty$. By \eqref{eq:tilfomweakconv} and the radial compactness lemma, we see that
\[\til f_{\om_j}
  \to\til f_\infty\quad
  \text{in $L^{q}(\R^2)$ as $j\to\infty$ for any $q > 2$}. \]
Since $\beta_{\al}(\om_j)\to\infty$, by \eqref{eq:tilfomcinfconv} we have
\[\frac{\til f_{\om_j}(0)}{\beta_{\al}(\om_j)}
  \to 0\quad
  \text{as $j\to\infty$}. \]
Therefore, we obtain
\[\til\phi_{\om_{j}}
  \to\til f_\infty\quad
  \text{in $L^{q}(\R^2)$ as $j\to\infty$ for any $q > 2$}. \]
Moreover, from Step~2 again, we see that $\til f_\infty\ne0$.

\item
$\til f_\infty=\til{\phi}_\infty$.

From the equation
\[-\Del\til f_\om
  +\til f_\om
  -\til{\phi}_{\om}^p
  =0 \]
and Step~3, we see that $\til f_\infty$ is a weak solution of Equation~\eqref{eq:nonlineq}. Therefore, the uniqueness of the nonnegative, radial, and decreasing solutions of \eqref{eq:nonlineq}, we obtain $\til f_\infty=\til{\phi}_\infty$.

\item
Strong convergence in $H^2(\R^2)$, i.e.,
\begin{equation}\label{eq:H2conv}
  \til f_{\om_{j}}
  \to \til{\phi}_\infty\quad
  \text{in }H^2(\R^2).
\end{equation}

From the equations we have
\begin{align*}
  |(-\Del+1)(\til{f}_{\om_{j}}-\til{\phi}_\infty)|
  =|\til{\phi}_{\om_{j}}^p-\til{\phi}_\infty^p|
  \lesssim(\til{\phi}_{\om_{j}}^{p-1}+\til{\phi}_{\infty}^{p-1})|\til{\phi}_{\om_{j}}-\til{\phi}_\infty|.
\end{align*}
Thus, by Steps 3 and 4, we obtain
\begin{align*}
  \|\til{f}_{\om_{j}}-\til{\phi}_\infty\|_{H^2}^2
  &\lesssim \int(\til{\phi}_{\om_{j}}^{2(p-1)}+\til{\phi}_{\infty}^{2(p-1)})|\til{\phi}_{\om_{j}}-\til{\phi}_\infty|^2\,dx
\\&\lesssim (\|\til{\phi}_{\om_{j}}\|_{L^{2(p+1)}}^{2(p-1)}
  +\|\til{\phi}_{\infty}\|_{L^{2(p+1)}}^{2(p-1)})
  \|\til{\phi}_{\om_{j}}-\til{\phi}_\infty\|_{L^{p+1}}^2
  \to0.
\end{align*}
This means that \eqref{eq:H2conv} holds.

\item Conclusion.

The above argument works if we start with any subsequence of $(\til\phi_{\om})_{\om>-e_\al}$. Thus we have
\[\til f_\om
  \to \til\phi_\infty \quad
  \text{in $H^2(\R^2)$ as $\om\to\infty$. } \]
\end{enumerate}
This completes the proof.
\end{proof}

    \section{Nondegeneracy and lower boundedness of linearized operator in radial function space}\label{sec:6}

In this section, we prove lower boundedness and nondegeneracy of the linearlized operator around rescaled ground states with the large frequency in the radial function space. We follow the argument in \cite{Fs,F05}.

We denote
\begin{align*}
  D_\rad(-\Del_\al;\R)
  &\ce \left\{f+\frac{f(0)}{\beta_\al(\lam)}G_\lam\colon\,
  f\in H_\rad^2(\R^2;\R)\right\},
\\H_{\al,\rad}^1(\R^2;\R)
  &\ce \{f+cG_\lam\colon\, f\in H_\rad^1(\R^2;\R),\ c\in\mb{R}\}.
\end{align*}

Let $(\phi_\om)_{\om>-e_\al}$ be a family of positive ground states. We define the linearized operator around $\til\phi_\om$ by
\begin{align*}
  \til{L}_\om v
  =\til{L}_{\al,\om} v
  &\ce(-\Del_{\til{\al}(\om)}+1)v
  -p\til{\phi}_{\om}^{p-1}v \quad
  \text{for } v\in H_\al^1(\R^2;\R).
\end{align*}
We denote
\[\til{L}_{\infty} f
  \ce(-\Del+1)f
  -p\til{\phi}_{\infty}^{p-1}f\quad
  \text{for $f\in H^2(\R^2;\R)$}. \]
It is known (see, e.g., \cite{Fs}) that there exists $C>0$ such that
\begin{align}
  \label{eq:Linfbddness}
  \|f\|_{H^2}
  &\le C\|\til{L}_{\infty} f\|_{L^2} \quad
  \text{for all $f\in H_\rad^2(\R^2;\R)$},
\\\label{eq:Linfbddness2}
  \|f\|_{H^1}
  &\le C\|\til{L}_{\infty} f\|_{H_{\rad}^{-1}} \quad
  \text{for all $f\in H_\rad^1(\R^2;\R)$}.
\end{align}

\begin{lemma}\label{lem:ftL}
There exist $\om_1>-e_\al$ and $C>0$ such that for all $\om>\om_1$,
\begin{equation}
  \|v\|_{\til{H}_{\al,\om}^1}
  \le C\|\til{L}_{\om} v\|_{\til{H}_{\al,\om,\rad}^{-1}}\quad
  \text{for all }
  v=f+cG_1\in H_{\al,\rad}^1(\R^2;\R).
\end{equation}
In particular, if $v\in H_{\al,\mr{rad}}^1(\R^2;\R)$ satisfies $\til{L}_{\om} v=0$, then $v=0$.
\end{lemma}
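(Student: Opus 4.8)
The plan is to argue by contradiction, exploiting the fact that the weight $\beta_\al(\om)$ attached to the singular part in the $\til{H}_{\al,\om}^1$-norm blows up as $\om\to\infty$. First I would record the bilinear form of the linearized operator: polarizing the quadratic form \eqref{eq:qf} with parameter $\til\al(\om)$ and $\lam=1$ (so that $\beta_{\til\al(\om)}(1)=\beta_\al(\om)$), for $v=f+cG_1$ and $w=g+dG_1$ in $H_{\al,\rad}^1(\R^2;\R)$ one has
\[
  \langle \til{L}_\om v, w\rangle = (f,g)_{H^1} + \beta_\al(\om)\,cd - p\int_{\R^2}\til\phi_\om^{p-1}(f+cG_1)(g+dG_1)\,dx .
\]
Next I would note, from Proposition~\ref{lem:4.1}, that $\til f_\om\to\til\phi_\infty$ in $H^2(\R^2)$ while the singular part $\til f_\om(0)\beta_\al(\om)^{-1}G_1\to 0$ in every $L^q$ with $q\in[2,\infty)$ (since $\til f_\om(0)$ stays bounded, $\beta_\al(\om)\to\infty$, and $G_1\in L^q$); hence $\til\phi_\om\to\til\phi_\infty$ in $L^q(\R^2)$ for all $q\in[2,\infty)$, so $\|\til\phi_\om^{p-1}\|_{L^q}$ is bounded for large $\om$ and $\til\phi_\om^{p-1}\to\til\phi_\infty^{p-1}$ in $L^q(\R^2)$ for all $q\in[1,\infty)$. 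Together with Lemma~\ref{lem:Sobemb}, these Hölder estimates also show $\til{L}_\om$ maps $H_{\al,\rad}^1(\R^2;\R)$ continuously into $\til{H}_{\al,\om,\rad}^{-1}$, so the statement is meaningful.

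Suppose the assertion fails. Since the failing quantifier is ``for all $\om_1$'', there are $\om_n\to\infty$ and $v_n=f_n+c_nG_1\in H_{\al,\rad}^1(\R^2;\R)$ with $\|v_n\|_{\til{H}_{\al,\om_n}^1}=1$ and $\|\til{L}_{\om_n}v_n\|_{\til{H}_{\al,\om_n,\rad}^{-1}}\to 0$. Write $a_n\ce\|f_n\|_{H^1}$ and $s_n\ce\beta_\al(\om_n)^{1/2}|c_n|$, so that $a_n^2+s_n^2=1$ and $|c_n|=s_n\beta_\al(\om_n)^{-1/2}\to 0$. I would then bound $\|\til{L}_{\om_n}v_n\|_{\til{H}_{\al,\om_n,\rad}^{-1}}$ from below by testing against two families of competitors. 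Testing against a radial $g\in H_{\rad}^1(\R^2;\R)$ with $\|g\|_{H^1}\le 1$ (for which $\|g\|_{\til{H}_{\al,\om_n}^1}=\|g\|_{H^1}$), the bilinear formula gives $\langle\til{L}_{\om_n}v_n,g\rangle=\langle\til{L}_\infty f_n,g\rangle-p\int(\til\phi_{\om_n}^{p-1}-\til\phi_\infty^{p-1})f_n g\,dx-p c_n\int\til\phi_{\om_n}^{p-1}G_1 g\,dx$, and by Hölder's inequality (all exponents finite), the $L^q$-convergence of $\til\phi_{\om_n}^{p-1}$, the uniform $L^q$-bounds, and $|c_n|\to 0$, the last two terms are $o(1)\|g\|_{H^1}$ uniformly in $\|g\|_{H^1}\le 1$; taking the supremum over such $g$ and invoking \eqref{eq:Linfbddness2},
\[
  \|\til{L}_{\om_n}v_n\|_{\til{H}_{\al,\om_n,\rad}^{-1}}\ge \|\til{L}_\infty f_n\|_{H^{-1}_\rad}-o(1)\ge C^{-1}a_n-o(1).
\]
Testing against $w_n\ce\beta_\al(\om_n)^{-1/2}G_1$ (with $\|w_n\|_{\til{H}_{\al,\om_n}^1}=1$), the formula gives $\langle\til{L}_{\om_n}v_n,w_n\rangle=\beta_\al(\om_n)^{1/2}c_n-p\beta_\al(\om_n)^{-1/2}\int\til\phi_{\om_n}^{p-1}(f_n+c_nG_1)G_1\,dx$, where the integral is bounded uniformly in $n$ (Hölder, using that $\til\phi_{\om_n}^{p-1}$ and $f_n$ are bounded in $L^q$, $|c_n|$ bounded, and $G_1,G_1^2\in L^q$), so that
\[
  \|\til{L}_{\om_n}v_n\|_{\til{H}_{\al,\om_n,\rad}^{-1}}\ge \beta_\al(\om_n)^{1/2}|c_n|-O\bigl(\beta_\al(\om_n)^{-1/2}\bigr)=s_n-o(1).
\]
Averaging the two bounds and using $a_n+s_n\ge\sqrt{a_n^2+s_n^2}=1$ gives $\|\til{L}_{\om_n}v_n\|_{\til{H}_{\al,\om_n,\rad}^{-1}}\ge\tfrac12\min\{1,C^{-1}\}-o(1)$, contradicting $\|\til{L}_{\om_n}v_n\|_{\til{H}_{\al,\om_n,\rad}^{-1}}\to 0$. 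This establishes the lower bound, and the nondegeneracy statement $\til{L}_\om v=0\Rightarrow v=0$ is then immediate.

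The main obstacle, and the point requiring the most care, is the treatment of the singular part. Because $\til\phi_\om$ is itself unbounded near the origin (its singular part is a nonzero multiple of $G_1$, with a logarithmic blow-up, since $\til f_\om(0)\ne0$), all estimates must be carried out with $L^q$-norms for $q<\infty$ rather than $L^\infty$. The decisive structural observation is that the large weight $\beta_\al(\om)$, which at first glance makes the $\til{H}^{-1}$-norm seem harder to bound below along singular directions, is precisely what supplies the gain $\beta_\al(\om)^{-1/2}$ that kills the nonlinear cross terms in the second test, while the leading term $\beta_\al(\om)^{1/2}c$ still recovers the full $\til{H}_{\al,\om}^1$-size of the singular component of $v$. (One could run the argument directly instead, choosing $\om_1$ so large that the $o(1)$ errors fall below, say, $\tfrac14\min\{1,C^{-1}\}$; the contradiction form is simply cleaner to present.)
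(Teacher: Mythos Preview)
Your proof is correct and follows essentially the same approach as the paper: both arguments expand the bilinear form $\langle\til{L}_\om v,w\rangle$ with $v=f+cG_1$, $w=g+dG_1$, isolate the leading contributions $\langle\til{L}_\infty f,g\rangle$ and $\beta_\al(\om)cd$, and control the cross terms using $\til\phi_\om^{p-1}\to\til\phi_\infty^{p-1}$ in $L^q$ together with $\beta_\al(\om)\to\infty$, then appeal to \eqref{eq:Linfbddness2}. The only difference is presentational: the paper works directly, taking the supremum over the product set $\{\|g\|_{H_\rad^1}^2\le 1/2,\ d^2\beta_\al(\om)\le 1/2\}$ so that both directions are handled in one line, whereas you run a contradiction argument and test separately against purely regular competitors $g$ and the purely singular competitor $\beta_\al(\om)^{-1/2}G_1$; as you yourself note, the direct version is immediate from your estimates.
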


\begin{remark}\label{rem:6.2}
Lemma~\ref{lem:ftL} means that zero is not an eigenvalue of the operator $\til{L}_{\om}\colon D_\rad(-\Del_{\til\al};\R)\to L_\rad^2(\R^2;\R)$. In fact, by essential spectral theorem, we see that zero is its resolvent.
\end{remark}

\begin{proof}[Proof of Lemma~\ref{lem:ftL}]
For $v=f+cG_1$, $w=g+dG_1\in H_{\al,\rad}^1(\R^2;\R)$, we have the expression
\begin{align*}
  \langle\til{L}_{\om} v,w\rangle
  ={}&\langle\til{L}_{\om} (f+cG_1),g+dG_1\rangle
\\={}&(\nabla f,\nabla g)_{L^2}
  +(f,g)_{L^2}
  +cd\beta_\al(\om)
  -p(\til{\phi}_{\om}^{p-1}v,w)_{L^2}
\\={}&\langle\til{L}_\infty f,g\rangle
  +cd\beta_\al(\om)
  +p(\til{\phi}_{\infty}^{p-1}f,g)_{L^2}
  -p(\til{\phi}_{\om}^{p-1}v,w)_{L^2}
\\={}&\langle\til{L}_\infty f,g\rangle
  +cd\beta_\al(\om)
  +p\int(\til{\phi}_{\infty}^{p-1}-\til{\phi}_{\om}^{p-1})fg
\\&-p\int\til{\phi}_{\om}^{p-1}(df+cg)G_1
  -pcd\int\til{\phi}_{\om}^{p-1}G_1^2.
\end{align*}
Therefore, using the estimate \eqref{eq:Linfbddness2}, we have
\begin{align*}
   \| \til{L}_{\om} v \|_{\til{H}_{\al,\om,\rad}^{-1}}
   &= \sup\{\langle \til{L}_{\om} v, w \rangle \colon\, \|w\|_{\til{H}_{\al,\om,\rad}^1} \le 1\}
\\ &\ge \sup\biggl\{\langle \til{L}_{\om} (f+cG_1), g+dG_1 \rangle\colon\, \substack{\|g\|_{H_\rad^1}^2\le 1/2, \\ d^2\beta_\al(\om)\le1/2} \biggr\}
\\ &= \sup\biggl\{\begin{aligned}[t]
   &\langle \til{L}_\infty f , g \rangle
   + c d \beta_\al(\om)
   + p \int(\til{\phi}_{\infty}^{p-1}-\til{\phi}_{\om}^{p-1}) f g
\\ &- p\int\til{\phi}_{\om}^{p-1}(df+cg)G_1
   -pcd\int\til{\phi}_{\om}^{p-1}G_1^2 \colon\,
   \substack{\|g\|_{H_\rad^1}\le 2^{-1/2}, \\ |d|\le (2\beta_\al(\om))^{-1/2}}
   \biggr\}
   \end{aligned}
\\ &\gtrsim\|\til{L}_\infty f\|_{H_\rad^{-1}}
  +\beta_\al(\om)^{1/2}|c|
\\ &-\|\til{\phi}_{\infty}-\til{\phi}_{\om}\|_{L^{p+1}}\|f\|_{L^{p+1}}
   -\frac{\|f\|_{H^1}}{\beta_\al(\om)^{1/2}}
   -|c|
   -\frac{|c|}{\beta_\al(\om)^{1/2}}
\\ &\gtrsim\|f\|_{H^{1}}
   +\beta_\al(\om)^{1/2}|c|
   \simeq\|v\|_{\til{H}_{\al,\om}^1}
\end{align*}
for large $\om$, where we used the fact $\beta_\al(\om)\to\infty$ as $\om\to\infty$. This completes the proof.
\end{proof}

    \section{Uniqueness of ground states for large frequencies}\label{sec:7}

In this section, we prove the uniqueness of ground states for large frequencies (Theorem~\ref{thm:uniqG}). The proof is based on \cite{F05}.

\begin{lemma}\label{lem:uniqueGi}
There exist $\delta>0$ and $\om_1>-e_\al$ such that the following holds. If $(\phi_\om)_{\om>-e_\al}$ is a family of positive ground states with $\phi_\om\in\ml{G}_\om$, $\om_0>\om_1$, and $\psi\in\ml{A}_{\om_0}$ is a real-valued radial solution satisfying $\|\til{\psi}-\til{\phi}_\infty\|_{H_\al^1}<\delta$, then $\psi=\phi_{\om_0}$.
\end{lemma}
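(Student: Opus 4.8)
The plan is to prove Lemma~\ref{lem:uniqueGi} by a contradiction/compactness argument combined with the nondegeneracy in Lemma~\ref{lem:ftL}, exactly in the spirit of \cite{F05}. Suppose the statement fails. Then for every $\delta>0$ and every $\om_1>-e_\al$ there is a family of positive ground states $(\phi_\om)$, a frequency $\om_0>\om_1$, and a real-valued radial solution $\psi\in\ml{A}_{\om_0}$ with $\psi\ne\phi_{\om_0}$ and $\lv\til\psi-\til\phi_\infty\rv_{H_\al^1}<\delta$. Letting $\delta\to 0$ and $\om_1\to\infty$, I would extract a sequence $\om_0=\om^{(n)}\to\infty$ together with real radial solutions $\psi_n\in\ml{A}_{\om^{(n)}}$, $\psi_n\ne\phi_{\om^{(n)}}$, such that both rescalings satisfy $\til\psi_n\to\til\phi_\infty$ and $\til\phi_{\om^{(n)}}\to\til\phi_\infty$ in $H_\al^1(\R^2)$, the latter from Proposition~\ref{lem:4.1}. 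Both $\til\psi_n$ and $\til\phi_{\om^{(n)}}$ solve the same stationary equation \eqref{eq:tilsp} with parameter $\til\al(\om^{(n)})$.

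Next I would set $w_n\ce \til\psi_n-\til\phi_{\om^{(n)}}$, which is a nonzero real-valued radial element of $H_{\al,\rad}^1(\R^2;\R)$, and normalize it: let $e_n\ce w_n/\lv w_n\rv_{\til H_{\al,\om^{(n)}}^1}$. Subtracting the two equations \eqref{eq:tilsp} (written via \eqref{eq:opaction} for the regular parts, as in \eqref{eq:spfphi}) and using the elementary inequality $\bigl||a|^{p-1}a-|b|^{p-1}b\bigr|\lesssim (|a|^{p-1}+|b|^{p-1})|a-b|$, one finds that $w_n$ satisfies
\[
\til L_{\om^{(n)}} w_n = N_n, \qquad \lv N_n\rv_{\til H_{\al,\om^{(n)},\rad}^{-1}} \lesssim \bigl(\lv\til\psi_n\rv_{L^{p+1}}^{p-1}+\lv\til\phi_{\om^{(n)}}\rv_{L^{p+1}}^{p-1}\bigr)\,o(1)\,\lv w_n\rv_{\til H_{\al,\om^{(n)}}^1},
\]
where the $o(1)$ comes from $\lv\til\psi_n-\til\phi_{\om^{(n)}}\rv_{L^{p+1}}\to 0$ together with the fact that the nonlinear remainder beyond the linearization is higher order. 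Dividing by $\lv w_n\rv_{\til H_{\al,\om^{(n)}}^1}$ gives $\lv\til L_{\om^{(n)}} e_n\rv_{\til H_{\al,\om^{(n)},\rad}^{-1}}\to 0$ with $\lv e_n\rv_{\til H_{\al,\om^{(n)}}^1}=1$. But Lemma~\ref{lem:ftL} asserts $\lv e_n\rv_{\til H_{\al,\om^{(n)}}^1}\le C\lv\til L_{\om^{(n)}} e_n\rv_{\til H_{\al,\om^{(n)},\rad}^{-1}}$ uniformly for large $\om^{(n)}$, forcing $1\le C\cdot o(1)$, a contradiction. Hence $w_n\equiv 0$ for $n$ large, i.e. $\til\psi_n=\til\phi_{\om^{(n)}}$, and undoing the rescaling gives $\psi=\phi_{\om_0}$ for all sufficiently large $\om_0$, which is the claim.

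The main obstacle I expect is the careful bookkeeping of the error term $N_n$ in the perturbed norm $\til H_{\al,\om,\rad}^{-1}$: one must decompose $w_n=g_n+d_nG_1$ into regular and singular parts (as is forced throughout this paper) and verify that pairing $N_n$ against a test function $h+d'G_1$ with $\lv h+d'G_1\rv_{\til H_{\al,\om}^1}\le 1$ indeed yields the stated bound, controlling in particular the cross terms involving $G_1$ and the coefficient $d_n$ via $\beta_\al(\om)\to\infty$. This is precisely the kind of estimate that Lemma~\ref{lem:ftL} was designed to absorb, so the structure is available; the work is to confirm the nonlinear difference $N_n$ really is $o(1)$ relative to $\lv w_n\rv_{\til H_{\al,\om}^1}$ in the dual norm, using the $H^2$-convergence from Proposition~\ref{lem:4.1} (hence $L^\infty$ and $L^{p+1}$ control of $\til\phi_{\om^{(n)}}$) and the Sobolev embedding of Lemma~\ref{lem:Sobemb} for $\til\psi_n$. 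A secondary point requiring care is that $\psi$, a priori only in $\ml{A}_{\om_0}$, lies in $D(-\Del_{\til\al})$ and has the decomposition $\til\psi=\til g+\til g(0)\beta_\al(\om_0)^{-1}G_1$ with $\til g\in H^2$, which follows from the bootstrap argument already used at the start of Section~\ref{sec:4}.
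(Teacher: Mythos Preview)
Your approach is correct but differs from the paper's. The paper avoids the contradiction argument and the Taylor remainder entirely by introducing the \emph{secant} linearization
\[
\til L_\om^* v \ce (-\Del_{\til\al}+1)v - V_\om v,\qquad V_\om(x)=p\int_0^1\bigl|\til\phi_\om(x)+s(\til\psi-\til\phi_\om)(x)\bigr|^{p-1}\,ds,
\]
for which $\til L_\om^*(\til\phi_\om-\til\psi)=0$ holds \emph{exactly}. The proof then shows $\lv V_\om-p\til\phi_\infty^{p-1}\rv_{L^q}\lesssim\delta$ and deduces, via \eqref{eq:Linfbddness}, a direct lower bound $\lv f\rv_{H^2}\le C\lv\til L_\om^* v\rv_{L^2}$ for $v\in D_\rad(-\Del_{\til\al};\R)$; injectivity of $\til L_\om^*$ then forces $\til\psi=\til\phi_\om$ immediately, with no sequence or normalization needed.

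Your route instead uses the tangent operator $\til L_\om$ and Lemma~\ref{lem:ftL} off the shelf, at the price of carrying the quadratic (or order-$p$) remainder $N_n$ and running a contradiction argument. This is perfectly legitimate: once you write $|N_n|\lesssim(|\til\phi_{\om^{(n)}}|+|w_n|)^{p-2}|w_n|^2$ for $p\ge 2$ (resp.\ $|N_n|\lesssim|w_n|^p$ for $1<p<2$), the dual-norm estimate $\lv N_n\rv_{\til H_{\al,\om,\rad}^{-1}}\lesssim\lv w_n\rv_{L^{p+1}}\cdot\lv w_n\rv_{\til H_{\al,\om}^1}$ follows from H\"older and Lemma~\ref{lem:Sobemb} (the singular coefficients being harmless since $|d'|\le\beta_\al(\om)^{-1/2}$), and $\lv w_n\rv_{L^{p+1}}\to 0$ gives the $o(1)$. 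The paper's version is more direct and yields constructive $\delta,\om_1$ without passing to subsequences; yours is more economical in that it recycles Lemma~\ref{lem:ftL} rather than proving a new coercivity estimate for a new operator.
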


\begin{proof}
We take a small $\delta>0$ to be chosen later, and let $\|\til{\psi}-\til{\phi}_\infty\|_{H_\al^1}<\delta$. We define the operator
\[\til{L}_{\om}^* v
  \ce (-\Del_{\til{\al}(\om)}+1)v
  -V_\om(x)v, \]
where
\[V_\om(x)
  \ce
  \left\{\begin{array}{@{}c l @{}}
  \dfrac{\til{\phi}_{\om}(x)^p-|\til{\psi}(x)|^{p-1}\til{\psi}(x)}{\til{\phi}_{\om}(x)-\til{\psi}(x)}
  &\text{if }\phi_{\om}(x)\ne\psi(x),
\\[4\jot]
  p\til{\phi}_{\om}(x)^{p-1}
  &\text{if }\phi_{\om}(x)=\psi(x).
  \end{array}\right. \]

First, we show that there exist $\om_1>-e_\al$ and $C>0$ such that if $\om>\om_1$, then
\begin{equation}\label{eq:ftLs}
  \|f\|_{H^2}
  \le C\|\til{L}_{\om}^* v\|_{L^2}\quad
  \text{for all }
  v=f+\frac{f(0)}{\beta_{\al}(\om)}G_1\in D_{\rad}(-\Del_{\til\al};\R).
\end{equation}
To prove this, we will show that
\begin{equation}\label{eq:convV}
  \|V_\om-p\til{\phi}_\infty^{p-1}\|_{L^q}
  \lesssim \delta\quad
  \text{for all $q\ge 2$ and large $\om$}.
\end{equation}
We can rewrite
\[V_\om(x)
  =p\int_0^1|\til{\phi}_{\om}(x)+sr_\om(x)|^{p-1}\,ds,\quad
  r_\om(x)
  \ce\til\psi(x)-\til\phi_{\om}(x). \]
Then we have
\begin{align*}
  &|V_\om-p\til{\phi}_\infty^{p-1}|
\\&\le p\int_0^1\left||\til{\phi}_{\om}+sr_\om|^{p-1}-\til{\phi}_\infty^{p-1}\right|\,ds
\\&\lesssim\left\{\begin{aligned}
  &(\til{\phi}_{\om}^{p-2}+|r_\om|^{p-2}+\til{\phi}_\infty^{p-2})(|\til{\phi}_{\om}-\til{\phi}_\infty|+|r_\om|)
  &&\text{if $p>2$}
\\&|\til{\phi}_{\om}-\til{\phi}_\infty|
  +|r_\om|
  &&\text{if $1<p\le 2$}.
  \end{aligned}\right.
\end{align*}
We note that Proposition~\ref{lem:4.1} implies $\|\til{\phi}_\om - \til{\phi}_\infty\|_{H_\al^1} < \delta$ for large $\om$. Therefore, if $p > 2$, then
\begin{align*}
  \| V_\om - p \til{\phi}_\infty^{p-1} \|_{L^q}
  \lesssim{}&( \| \til{\phi}_{\om} \|_{ L^{(p-1)q} }^{p-2}
  +\| r_{\om} \|_{ L^{(p-1)q} }^{p-2}
  +\| \til{\phi}_{\infty} \|_{ L^{(p-1)q} }^{p-2} )
\\&\cdot(\| \til{\phi}_{\om} - \til{\phi}_\infty \|_{ L^{(p-1)q} }
  +\|r_\om\|_{L^{(p-1)q}})
\\\lesssim{}&\| \til{\phi}_{\om} - \til{\phi}_\infty \|_{H_\al^1}
  +\| \til{\psi} - \til{\phi}_\infty \|_{ H_\al^1 }
  \lesssim \delta.
\end{align*}
If $1<p\le 2$, we have
\begin{align*}
  \|V_\om-p\til{\phi}_\infty^{p-1}\|_{L^q}
  &\lesssim \|\til{\phi}_{\om}-\til{\phi}_\infty\|_{L^{q}}
  +\|r_\om\|_{L^q}
\\&\lesssim \|\til{\phi}_{\om}-\til{\phi}_\infty\|_{H_\al^1}
  +\|\til{\psi}-\til{\phi}_\infty\|_{H_\al^1}
  \lesssim \delta.
\end{align*}
Therefore, we have \eqref{eq:convV}.

Next, we show the estimate \eqref{eq:ftLs}. By using the expression
\begin{align*}
  \til{L}_{\om}^* v
  &=\til{L}_{\infty} f
  +p\til{\phi}_{\infty}^{p-1}f
  -V_\om(x) v
\\&=\til{L}_{\infty} f
  +(p\til{\phi}_{\infty}^{p-1}-V_\om(x))f
  -\frac{f(0)}{\beta_{\al}(\om)}V_\om(x)G_1
\end{align*}
for $v=f+f(0)\beta_\al(\om)^{-1}G_1\in D_{\rad}(-\Del_{\til\al};\R)$,  \eqref{eq:Linfbddness}, \eqref{eq:convV}, and the embedding $|f(0)|\lesssim\|f\|_{H^2}$ we obtain
\begin{align*}
  \|\til{L}_{\om}^*v\|_{L^2}
  &\ge \|\til{L}_{\infty} f\|_{L^2}
  -\|(p\til{\phi}_{\infty}^{p-1}-V_\om)f\|_{L^2}
  -\frac{|f(0)|}{\beta_{\al}(\om)}\|V_\om G_1\|_{L^2}
\\&\gtrsim \|f\|_{H^2}-\delta\|f\|_{H^2}
  \simeq\|f\|_{H^2}
\end{align*}
if $\delta$ is small and $\om$ is large sufficiently. This implies that the estimate \eqref{eq:ftLs} holds.

Finally, from the equation \eqref{eq:tilsp}, we have $\til{L}_{\om}^* (\til{\phi}_{\om} - \til{\psi}) = 0$. By \eqref{eq:ftLs} we obtain $\til{\phi}_{\om}=\til{\psi}$. This completes the proof.
\end{proof}

\begin{proof}[Proof of Theorem~\ref{thm:uniqG}]
The assertion follows from Theorem~\ref{thm:symmetryGS}, Proposition~\ref{lem:4.1}, and Lemma~\ref{lem:uniqueGi}.
\end{proof}

    \section{Regularity of $\om\mapsto\phi_{\om}$}\label{sec:8}

In this section, we verify the differentiability of $\om\mapsto\phi_\om$ for large $\om$ following the argument in \cite[Section~6]{SS85} with modifications.

\begin{proposition}\label{prop:reg1}
Let $\om_1>-e_\al$ be as in Lemma~\ref{lem:ftL} and Lemma~\ref{lem:uniqueGi} and let $\phi_{\om}=f_\om+f_\om(0)\beta_\al(\om)^{-1}G_\om\in\ml{G}_{\om}$ be the unique positive radial ground state given in Theorem~\ref{thm:uniqG}. Then the map $\om\mapsto\til{f}_\om$ is in $C^1((\om_1,\infty),H_\rad^2(\R^2;\R))$.
\end{proposition}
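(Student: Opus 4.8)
The plan is to apply the implicit function theorem to an appropriate $C^1$ map defined on a neighborhood of a point $(\om_0, \til{f}_{\om_0})$ in $(\om_1,\infty)\times H^2_\rad(\R^2;\R)$, using the nondegeneracy estimate from Lemma~\ref{lem:ftL} as the invertibility hypothesis. First I would introduce the map
\[
  \Phi(\om, f) \ce (-\Del + 1) f - \Bigl( f + \frac{f(0)}{\beta_\al(\om)} G_1 \Bigr)^{\!p} \ \in\ L^2_\rad(\R^2;\R),
\]
for $\om$ in a neighborhood of $\om_0$ and $f$ in a neighborhood of $\til{f}_{\om_0}$ in $H^2_\rad$. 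Here I use the characterization \eqref{eq:opaction}: a function $v = f + f(0)\beta_\al(\om)^{-1} G_1 \in D_\rad(-\Del_{\til\al};\R)$ solves $(-\Del_{\til\al(\om)}+1)v = v^p$ precisely when $\Phi(\om,f) = 0$, since $(-\Del_{\til\al(\om)}+1)v = (-\Del+1)f$ on the regular part. Thus $\Phi(\om,\til f_\om)=0$ for every $\om>\om_1$ by \eqref{eq:tilsp} and \eqref{eq:expresstilphi}, and what must be shown is that near $\om_0$ this is the only solution and that it depends $C^1$ on $\om$.

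The main steps are: (i) verify $\Phi$ is well-defined and $C^1$ from $(\om_1,\infty)\times H^2_\rad$ into $L^2_\rad$. The only nonlinearity is the Nemytskii-type map $f\mapsto (f+f(0)\beta_\al(\om)^{-1}G_1)^p$; since $H^2(\R^2)\hookrightarrow C(\R^2)\cap L^q$ for all $q<\infty$, $G_1\in L^q$ for all $q<\infty$ by \eqref{eq.pprlm1}, $\beta_\al(\om)$ is smooth and bounded away from $0$ for $\om>\om_1$, and $p>1$, the map $(\om,f)\mapsto (f + f(0)\beta_\al(\om)^{-1}G_1)^p$ is $C^1$ into $L^2$ (one checks the difference quotients converge using Hölder and the embeddings; when $1<p<2$ a small extra argument handles the non-Lipschitz power, exactly as in the estimates already used in Lemma~\ref{lem:uniqueGi}). (ii) Compute the partial Fréchet derivative in $f$ at $(\om_0,\til f_{\om_0})$: it is
\[
  D_f\Phi(\om_0,\til f_{\om_0})\,h = (-\Del+1)h - p\,\til\phi_{\om_0}^{\,p-1}\Bigl( h + \frac{h(0)}{\beta_\al(\om_0)} G_1 \Bigr),
\]
which is exactly $\til L_{\om_0}$ acting on $v = h + h(0)\beta_\al(\om_0)^{-1}G_1$, viewed as a map $H^2_\rad \to L^2_\rad$. (iii) Invoke Lemma~\ref{lem:ftL} together with Remark~\ref{rem:6.2}: for $\om_0>\om_1$, the operator $\til L_{\om_0}\colon D_\rad(-\Del_{\til\al};\R)\to L^2_\rad(\R^2;\R)$ has $0$ in its resolvent set, hence is a bijection with bounded inverse; pulling this back through the (bounded, invertible) identification $h\mapsto h + h(0)\beta_\al(\om_0)^{-1}G_1$ shows $D_f\Phi(\om_0,\til f_{\om_0})$ is an isomorphism of $H^2_\rad$ onto $L^2_\rad$. (iv) Apply the implicit function theorem: there is a neighborhood $(\om_0-\eta,\om_0+\eta)$ and a $C^1$ map $\om\mapsto g_\om\in H^2_\rad$ with $g_{\om_0}=\til f_{\om_0}$ and $\Phi(\om,g_\om)=0$, unique in a neighborhood of $\til f_{\om_0}$. (v) Identify $g_\om$ with $\til f_\om$: by Proposition~\ref{lem:4.1}, $\til f_\om\to\til\phi_\infty$ in $H^2$ as $\om\to\infty$, and for $\om$ large $\til f_\om$ lies in the uniqueness neighborhood of Lemma~\ref{lem:uniqueGi}, so $\til f_\om$ is the unique real radial solution there; hence $g_\om=\til f_\om$ on the overlap, giving the $C^1$ regularity locally near every $\om_0>\om_1$ (after possibly enlarging $\om_1$ so that the implicit-function neighborhoods and the uniqueness neighborhood are compatible), and therefore $\om\mapsto\til f_\om\in C^1((\om_1,\infty),H^2_\rad(\R^2;\R))$.

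The step I expect to be the main obstacle is (iii)–(iv): one must be careful that the nondegeneracy in Lemma~\ref{lem:ftL} is stated as a lower bound $\|v\|_{\til H^1_{\al,\om}} \lesssim \|\til L_\om v\|_{\til H^{-1}_{\al,\om,\rad}}$, i.e.\ at the $H^1$–$H^{-1}$ level, whereas the implicit function theorem here needs invertibility $H^2_\rad\to L^2_\rad$. This is the content of \eqref{eq:ftLs} in the proof of Lemma~\ref{lem:uniqueGi} (with $V_\om=p\til\phi_\om^{p-1}$, which is the $\til L_\om$ case), giving $\|f\|_{H^2}\lesssim \|\til L_\om v\|_{L^2}$; combined with surjectivity — which follows because $\til L_\om$ is a relatively compact ($\til\phi_\om^{p-1}$-multiplication) perturbation of $-\Del_{\til\al}+1$, whose range is all of $L^2$, so $\til L_\om$ is Fredholm of index $0$ and injectivity forces surjectivity — this yields the isomorphism. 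The secondary nuisance is the $C^1$-dependence on $\om$ through both $\beta_\al(\om)$ (smooth, harmless) and the rescaling relation between $\phi_\om$ and $\til\phi_\om$; but since the proposition is phrased directly in terms of $\til f_\om$ and $\Phi$ is built from $\til\phi_\om = \til f_\om + \til f_\om(0)\beta_\al(\om)^{-1}G_1$, this is automatic once (i) is established.
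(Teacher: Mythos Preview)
Your proposal is correct and follows essentially the same approach as the paper: implicit function theorem applied to the rescaled equation, with the nondegeneracy of Lemma~\ref{lem:ftL}/Remark~\ref{rem:6.2} supplying invertibility of the linearization, and Lemma~\ref{lem:uniqueGi} used to identify the implicitly defined branch with $\til f_\om$. The only cosmetic difference is that the paper precomposes with $(1-\Del)^{-1}$, defining $F(\om,f)=f-(1-\Del)^{-1}\bigl[|f+f(0)\beta_\al(\om)^{-1}G_1|^{p-1}(f+f(0)\beta_\al(\om)^{-1}G_1)\bigr]$ as a map $H^2_\rad\to H^2_\rad$; this makes the factorization $\partial_f F(\om_0,\til f_{\om_0})=(1-\Del)^{-1}\til L_{\om_0}\tau_{\om_0}$ (with $\tau_\om w=w+w(0)\beta_\al(\om)^{-1}G_1$) yield invertibility directly from Remark~\ref{rem:6.2}, bypassing the $H^2\to L^2$ upgrade you flagged in step~(iii).
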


\begin{proof}
We define the function $F$ by
\begin{align*}
  F(\om,f)
  \ce f-(1-\Del)^{-1}\left[\left|f+\frac{f(0)}{\beta_\al(\om)}G_1\right|^{p-1}\left(f+\frac{f(0)}{\beta_\al(\om)}G_1\right)\right],
\\(\om,f)\in (-e_\al,\infty)\times H_{\rad}^2(\R^2;\R).
\end{align*}
Then we have
\begin{align*}
  \frac{\pt F}{\pt\om}(\om,f)
  ={}&-\frac{pf(0)}{4\pi\om\beta_\al(\om)^2}(1-\Del)^{-1}\left[\left|f+\frac{f(0)}{\beta_\al(\om)}G_1\right|^{p-1}G_1\right],
\end{align*}
and for $w\in H_{\rad}^2(\R^2;\R)$,
\begin{align*}
  \frac{\pt F}{\pt f}(\om,f)w
  ={}&w-p(1-\Del)^{-1}\left[\left|f+\frac{f(0)}{\beta_\al(\om)}G_1\right|^{p-1}\left(w+\frac{w(0)}{\beta_\al(\om)}G_1\right)\right].
\end{align*}
From this expression we have
\[F\in C^1\bigl((-e_\al,\infty)\times H_{\rad}^2(\R^2;\R),H_{\rad}^2(\R^2;\R)\bigr). \]

Let $\om_0>\om_1$. We have
\[F(\om_0,\til{f}_{\om_0})
  =(1-\Del)^{-1}\til{S}_{\om_0}'(\til{\phi}_{\om_0})
  =0. \]
Moreover, since the operator $\til{L}_{\om_0}=(1-\Del_{\til{\al}})-p\til{\phi}_{\om_0}^{p-1}\colon D_\rad(-\Del_{\til{\al}};\R)\to L^2(\R^2;\R)$ is invertible by Remark~\ref{rem:6.2} and since the map $\tau_\om\colon H_{\rad}^2(\R^2;\R)\to D_\rad(-\Del_{\til{\al}};\R)$; $w\mapsto w+w(0)\beta_{\til{\al}}(1)^{-1}G_1$ is also invertible by the definition of $D_\rad(-\Del_{\til{\al}};\R)$, we see that the operator
\begin{align*}
  \frac{\pt F}{\pt f}(\om_0,\til{f}_{\om_0})
  &=I-(1-\Del)^{-1}p\til{\phi}_{\om_0}^{p-1}
\\&=(1-\Del)^{-1}\til{L}_{\om_0}\tau_{\om_0}
  \colon H_{\rad}^2(\R^2;\R)\to H_{\rad}^2(\R^2;\R)
\end{align*}
is also invertible. Therefore, by the implicit function theorem, there exists a $C^1$-curve $\om\mapsto g_\om$ defined on a neighborhood of $\om_0$ into $H_{\rad}^2(\R^2;\R)$ such that $F(\om,g_\om)=0$ and $g_{\om_0}=\til{f}_{\om_0}$. From Lemma~\ref{lem:uniqueGi} we have $g_{\om}=\til{f}_\om$ for $\om$ around $\om_0$. This completes the proof.
\end{proof}

Note that by a standard elliptic regularity argument, one can obtain the spatially exponential decay of $f_\om$. Thus, by Proposition~\ref{prop:reg1} and the definition of the rescaling $\til f_{\om}$, we obtain the regularity of $\om\mapsto f_\om$. In particular, we have

\begin{corollary}\label{cor:reg}
Let $\om_1>-e_\al$ be as in Lemma~\ref{lem:ftL} and Lemma~\ref{lem:uniqueGi} and let $\phi_{\om}$ be the unique positive radial ground state given in Theorem~\ref{thm:uniqG}. Then the map $\om\mapsto\phi_\om$ is in $C^1((\om_1,\infty),H_{\al,\rad}^1(\R^2;\R))$.
\end{corollary}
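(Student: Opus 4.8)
The statement to prove is Corollary~\ref{cor:reg}: the map $\om\mapsto\phi_\om$ is $C^1$ from $(\om_1,\infty)$ into $H^1_{\al,\rad}(\R^2;\R)$. The plan is to deduce this from Proposition~\ref{prop:reg1}, which already gives that $\om\mapsto\til f_\om$ is $C^1$ with values in $H^2_\rad(\R^2;\R)$. The two remaining tasks are (i) to undo the rescaling, passing from $\til f_\om$ to $f_\om$, and (ii) to reconstruct $\phi_\om$ from $f_\om$ (equivalently from $\til f_\om$) and check the regularity survives, keeping track of the fact that the singular profile $G_\om$ itself depends on $\om$.

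First I would record the relation between $f_\om$ and $\til f_\om$ coming from the definitions in Section~\ref{sec:5}: since $\til\phi_\om(x)=\om^{-1/(p-1)}\phi_\om(x/\sqrt\om)$ and $\til\phi_\om=\til f_\om+\til f_\om(0)\beta_\al(\om)^{-1}G_1$ while $\phi_\om=f_\om+f_\om(0)\beta_\al(\om)^{-1}G_\om$, comparing regular parts (using $G_\om(x/\sqrt\om)=G_1(x)$ and $\beta_{\til\al}(1)=\beta_\al(\om)$) gives $f_\om(x)=\om^{1/(p-1)}\til f_\om(\sqrt\om\,x)$. The map $\om\mapsto(x\mapsto g(\sqrt\om\,x))$ is smooth from $(\om_1,\infty)$ into the bounded operators on $H^2_\rad(\R^2;\R)$ — here one uses that $H^2$ dilations form a $C^1$ (indeed $C^\infty$) one-parameter family in operator norm on $H^2_\rad$, which follows because $\tfrac{d}{d\om}g(\sqrt\om\,\cdot)=\tfrac1{2\om}(x\cdot\nabla g)(\sqrt\om\,\cdot)$ and $x\cdot\nabla$ is bounded $H^2_\rad\to H^1_\rad$, combined with the exponential spatial decay of $f_\om$ (noted in the paragraph before the corollary via elliptic regularity) so that the $H^2$-to-$H^1$ loss is harmless after the dilation. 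Composing the $C^1$ map $\om\mapsto\til f_\om$ with this smooth dilation action and the scalar $C^1$ factor $\om^{1/(p-1)}$, I get $\om\mapsto f_\om\in C^1((\om_1,\infty),H^2_\rad(\R^2;\R))$, hence also $\om\mapsto f_\om(0)\in C^1$ by the continuity of the evaluation $H^2\hookrightarrow C$.

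Next I would reconstruct $\phi_\om=f_\om+f_\om(0)\beta_\al(\om)^{-1}G_\om$ as an element of $H^1_{\al,\rad}(\R^2;\R)$. The first summand $\om\mapsto f_\om$ is $C^1$ into $H^2\subset H^1\subset H^1_{\al,\rad}$. For the singular part, the coefficient $\om\mapsto f_\om(0)\beta_\al(\om)^{-1}$ is $C^1$ (recall $\beta_\al(\om)=\tfrac1{4\pi}\ln(\om/(-e_\al))$ is smooth and nonvanishing for $\om>-e_\al$), and I must show $\om\mapsto c\,G_\om$ is $C^1$ into $H^1_{\al,\rad}$ for fixed $c$. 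Here the key point is the $\lam$-independence of the space $H^1_\al$ together with \eqref{eq:GlamGmu}: writing $G_\om=G_\mu+(G_\om-G_\mu)$ for a fixed reference $\mu$, the difference $G_\om-G_\mu$ lies in $H^{3-\ve}\subset H^2$ and, by \eqref{eq:defGlambda} and the scaling $G_\om(x)=\tfrac1{2\pi}K_0(\sqrt\om|x|)$, the map $\om\mapsto G_\om-G_\mu$ is $C^1$ into $H^2_\rad$ (differentiating under the integral in Fourier variables, $\pt_\om\hat G_\om=-(2\pi(|\xi|^2+\om)^2)^{-1}$, which is $L^2$-controlled in a way uniform on compacta of $\om$). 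Thus in the decomposition $\phi_\om=(f_\om+f_\om(0)\beta_\al(\om)^{-1}(G_\om-G_\mu))+f_\om(0)\beta_\al(\om)^{-1}G_\mu$, the first bracket is $C^1$ into $H^1(\R^2)$ and the second is a $C^1$ scalar multiple of the fixed vector $G_\mu\in H^1_{\al,\rad}$; both pieces are $C^1$ into $H^1_{\al,\rad}(\R^2;\R)$, so their sum is, which is exactly the claim.

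The main obstacle is the regularity of the dilation action $\om\mapsto f_\om(\sqrt\om\,\cdot)$ on $H^2$ and the fact that differentiating it costs a derivative: a priori $\tfrac{d}{d\om}f_\om(\sqrt\om\,\cdot)$ only lands in $H^1$, not $H^2$. This is why one genuinely needs the smoothing/decay input — either the exponential decay of $f_\om$ from elliptic regularity (so that $x\cdot\nabla f_\om\in H^2$ after all, or at least the dilation family is $C^1$ in $H^2$ operator norm because the profiles are Schwartz-class), or, more cleanly, the observation in Proposition~\ref{prop:reg1} itself that $\til f_\om$ solves a fixed-point equation $F(\om,\til f_\om)=0$ with $F$ valued in $H^2_\rad$, so one could instead set up the analogous implicit-function argument directly for the unscaled $f_\om$ and bypass the dilation entirely. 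I would use whichever is shortest, but the honest point to check is that the $\om$-derivative does not leave $H^2_\rad$, which is where the exponential decay of $f_\om$ is used.
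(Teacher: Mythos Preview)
Your approach is essentially the paper's own: the paper gives only the two-sentence sketch preceding the corollary (exponential decay of $f_\om$ from elliptic regularity, then undo the rescaling from Proposition~\ref{prop:reg1}), and you have filled in the details, including the treatment of the $\om$-dependence of $G_\om$ via $G_\om=G_\mu+(G_\om-G_\mu)$, which the paper does not spell out at all. One phrasing to tighten: dilations do \emph{not} form a $C^1$ family in the operator norm on $H^2_\rad$---your own computation shows the generator $x\cdot\nabla$ only maps $H^2_\rad\to H^1_\rad$---but, as you correctly note in your final paragraph, the exponential decay of $\til f_\om$ (uniform on compacta in $\om$) puts $x\cdot\nabla\til f_\om$ back in $H^2_\rad$, so the specific orbit $\om\mapsto \til f_\om(\sqrt\om\,\cdot)$ is $C^1$ in $H^2_\rad$; that is the statement you actually need and use.
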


    \section{Stability and instability for large frequencies}\label{sec:9}

In this section, we calculate the value $\frac{d}{d\om}\|\phi_\om\|_{L^2}^2$ for large $\om$ based on \cite{Fs,F05}. We prove the following.

\begin{proposition}\label{prop:stabinsta}
Let $(\phi_\om)_{\om>\om_1}$ be the family of the unique positive ground states obtained in Theorem~\ref{thm:uniqG}. Then there exists $\om^*=\om^*(p)\in(\om_1,\infty)$ such that the following is true.
\begin{itemize}
\item If $1<p\le 3$, then  $\frac{d}{d\om}\|\phi_\om\|_{L^2}^2>0$ for all $\om>\om^*$.
\item If $p>3$, then  $\frac{d}{d\om}\|\phi_\om\|_{L^2}^2<0$ for all $\om>\om^*$.
\end{itemize}
\end{proposition}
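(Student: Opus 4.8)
The plan is to compute $\frac{d}{d\om}\|\phi_\om\|_{L^2}^2$ by passing to the rescaled ground state $\til\phi_\om = \om^{-1/(p-1)}\phi_\om(\cdot/\sqrt\om)$, for which we have the good convergence $\til f_\om \to \til\phi_\infty$ in $H^2$ from Proposition~\ref{lem:4.1} and the $C^1$-regularity of $\om\mapsto\til f_\om$ from Proposition~\ref{prop:reg1}. First I would record the scaling identity relating the two quantities: since $\til\phi_\om(x) = \om^{-1/(p-1)}\phi_\om(x/\sqrt\om)$, a change of variables gives
\[
  \|\phi_\om\|_{L^2}^2 = \om^{\frac{2}{p-1}-1}\|\til\phi_\om\|_{L^2}^2,
\]
so that
\[
  \frac{d}{d\om}\|\phi_\om\|_{L^2}^2 = \om^{\frac{2}{p-1}-2}\left[\left(\tfrac{2}{p-1}-1\right)\|\til\phi_\om\|_{L^2}^2 + \om\,\frac{d}{d\om}\|\til\phi_\om\|_{L^2}^2\right].
\]
Since $\om^{\frac{2}{p-1}-2}>0$, the sign is governed by the bracket. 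The exponent $\frac{2}{p-1}-1 = \frac{3-p}{p-1}$ is positive for $p<3$, zero for $p=3$, negative for $p>3$, which already produces the expected trichotomy provided the error term $\om\,\frac{d}{d\om}\|\til\phi_\om\|_{L^2}^2$ is small (i.e.\ $o(1)$) as $\om\to\infty$. So the main task is to show $\frac{d}{d\om}\|\til\phi_\om\|_{L^2}^2 \to 0$ — in fact that $\om\,\frac{d}{d\om}\|\til\phi_\om\|_{L^2}^2\to 0$ — as $\om\to\infty$.

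Next I would differentiate the rescaled equation \eqref{eq:tilsp} in $\om$. Writing $\til\phi_\om = \til f_\om + \til f_\om(0)\beta_\al(\om)^{-1}G_1$ and $\pt_\om\til\phi_\om = \pt_\om\til f_\om + \pt_\om\big(\til f_\om(0)\beta_\al(\om)^{-1}\big)G_1 \in D(-\Del_{\til\al})$, differentiating gives
\[
  \til L_\om\big(\pt_\om\til\phi_\om\big) = 0
\]
at the level of the operator $-\Del_{\til\al}$; but $\til\al=\til\al(\om)$ depends on $\om$ too, so care is needed — the cleanest route is to differentiate the equation for the \emph{regular part}, namely $(-\Del+1)\til f_\om = \til\phi_\om^p$, which gives $(-\Del+1)\pt_\om\til f_\om = p\til\phi_\om^{p-1}\pt_\om\til\phi_\om$, i.e.\ $\til L_\infty(\pt_\om\til f_\om) = p(\til\phi_\om^{p-1}-\til\phi_\infty^{p-1})\pt_\om\til f_\om + p\til\phi_\om^{p-1}\,\pt_\om\big(\til f_\om(0)\beta_\al(\om)^{-1}\big)G_1$. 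Using the invertibility estimate \eqref{eq:Linfbddness}–\eqref{eq:Linfbddness2} for $\til L_\infty$ (or rather the uniform version for $\til L_\om$ from Lemma~\ref{lem:ftL}) together with $\|\til\phi_\om^{p-1}-\til\phi_\infty^{p-1}\|\to 0$ and $\beta_\al(\om)^{-1}\to 0$, one bounds $\|\pt_\om\til f_\om\|_{H^2}$ in terms of $|\pt_\om(\til f_\om(0)\beta_\al(\om)^{-1})|$. The key auxiliary estimate — this is Lemma~\ref{lem:deltilf0} advertised in the introduction — is that $\pt_\om\til f_\om(0)$, and hence the whole singular-part coefficient derivative, is $o(1/\om)$; the introduction indicates this is obtained by writing $\til f_\om(0) = \langle(-\Del_{\til\al}+\lam)G_\lam, \til\phi_\om\rangle$ for a suitable test pairing and differentiating, then invoking the boundedness of $\til L_\om^{-1}$. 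Once $\|\pt_\om\til\phi_\om\|_{H_\al^1}$ is shown to decay, $\frac{d}{d\om}\|\til\phi_\om\|_{L^2}^2 = 2\Re(\til\phi_\om,\pt_\om\til\phi_\om)_{L^2}$ is controlled, and multiplying by $\om$ and tracking the precise powers of $\beta_\al(\om)\sim\frac1{4\pi}\ln\om$ shows $\om\,\frac{d}{d\om}\|\til\phi_\om\|_{L^2}^2\to 0$.

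With that in hand, for $p\ne 3$ the sign of the bracket is determined by $\big(\tfrac{3-p}{p-1}\big)\|\til\phi_\infty\|_{L^2}^2\ne 0$ for all $\om$ past some $\om^*$, giving the strict inequalities claimed. For the borderline case $p=3$ the leading term vanishes identically, and one must extract the sign from the next order: here $\frac{d}{d\om}\|\phi_\om\|_{L^2}^2 = \om^{-1}\cdot\om\frac{d}{d\om}\|\til\phi_\om\|_{L^2}^2$ up to the positive prefactor, so one needs $\om\frac{d}{d\om}\|\til\phi_\om\|_{L^2}^2 > 0$ for large $\om$, i.e.\ a \emph{signed} (not merely $o(1)$) lower bound on that quantity. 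I expect this $p=3$ case to be the main obstacle: it requires computing the leading asymptotics of $\pt_\om\til\phi_\om$ rather than just bounding it, and in particular identifying the sign of the contribution coming from the point interaction (the $\beta_\al(\om)^{-1}G_1$ piece and its $\om$-derivative, which carry the $-\frac{1}{4\pi\om}\beta_\al(\om)^{-2}$ factor visible in $\pt_\om F$ in Section~\ref{sec:8}). The strategy there is to use $\pt_\om\til f_\om \approx \til L_\infty^{-1}\big[p\til\phi_\infty^{p-1}\,\pt_\om(\til f_\om(0)\beta_\al(\om)^{-1})\,G_1\big]$ to leading order, pair with $\til\phi_\infty$, and use the Pohozaev/virial identities for $\til\phi_\infty$ together with the known sign structure to conclude that the resulting quantity is positive. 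Combining all cases with Proposition~\ref{prop:scstabinsta} then yields Theorem~\ref{thm:stablarge}, with $\om^*$ the maximum of the thresholds produced along the way.
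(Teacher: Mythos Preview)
Your scaling reduction and the treatment of $p\ne 3$ are correct and match the paper: one has $\|\phi_\om\|_{L^2}^2=\om^{(3-p)/(p-1)}\|\til\phi_\om\|_{L^2}^2$, and it is indeed true that $\om\,\frac{d}{d\om}\|\til\phi_\om\|_{L^2}^2\to 0$, which settles $p\ne 3$. The estimate on $\pt_\om\til f_\om(0)$ you describe is exactly Lemma~\ref{lem:deltilf0} and is proved in the way you outline.

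Where your route diverges is the borderline case $p=3$. You propose to extract the sign from the leading asymptotics $\pt_\om\til f_\om\approx\til L_\infty^{-1}[\,p\til\phi_\infty^{p-1}\,\pt_\om(\til f_\om(0)\beta_\al(\om)^{-1})\,G_1\,]$ and then pair with $\til\phi_\infty$; this is plausible but leaves a genuine sign computation involving $\til L_\infty^{-1}$ that you have not carried out. The paper avoids this entirely by two \emph{exact} identities for $\til\phi_\om$ (not for the limit $\til\phi_\infty$). First, a Pohozaev identity obtained by testing $(-\Del+1)\til f_\om=\til\phi_\om^{p}$ against $x\cdot\nabla\til\phi_\om$ gives
\[
  \|\til\phi_\om\|_{L^2}^2=\frac{\til f_\om(0)^2}{4\pi\beta_\al(\om)^2}+\frac{2}{p+1}\|\til\phi_\om\|_{L^{p+1}}^{p+1},
\]
so that differentiating in $\om$ replaces $\frac{d}{d\om}\|\til\phi_\om\|_{L^2}^2$ by $2\!\int\til\phi_\om^{p}\pt_\om\til\phi_\om$ plus explicit lower-order terms. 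Second, pairing the equation with $\pt_\om\til\phi_\om$ and the $\om$-differentiated equation with $\til\phi_\om$, and subtracting, yields the exact formula
\[
  (p-1)\int\til\phi_\om^{p}\pt_\om\til\phi_\om=\frac{\til f_\om(0)^2}{4\pi\om\beta_\al(\om)^2}>0
\]
(this is Lemma~\ref{lem:pdeltilphi}). Plugging in gives
\[
  \frac{d}{d\om}\|\phi_\om\|_{L^2}^2=\om^{\frac{2(2-p)}{p-1}}\Bigl(\tfrac{3-p}{p-1}\|\til\phi_\om\|_{L^2}^2+\tfrac{\til f_\om(0)^2}{2(p-1)\pi\beta_\al(\om)^2}+O(\beta_\al(\om)^{-3})\Bigr),
\]
with the second term manifestly positive. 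This single expansion handles all $p>1$ at once: for $p<3$ both displayed terms are positive; for $p>3$ the first dominates and is negative; for $p=3$ the first vanishes and the second decides the sign. No asymptotic inversion of $\til L_\infty$ or sign chase is needed---that is the idea your proposal is missing.
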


We note that the rescaled ground state $\til{\phi}_{\om}=\til{f}_\om+\til{f}(0)\beta_{\al}(\om)^{-1}G_1$ satisfies the equation
\begin{equation}\label{eq:tilspf}
  (-\Del+1)\til{f}_\om
  -\til{\phi}_\om^{p}
  =0.
\end{equation}
Moreover, for $\om>\om_1$, where $\om_1$ is as in Proposition~\ref{prop:reg1}, the derivative $\pt_\om \til f_\om$ is in $H^2(\R^2)$, and $\pt_\om \til f_\om(0)$ makes sense for

We prepare some lemmas.

\begin{lemma}
For $\om>\om_1$, the following Pohozaev identity holds:
\begin{equation}\label{eq:Pohozaev1}
  \|\til{\phi}_{\om}\|_{L^2}^2
  =\frac{\til{f}_\om(0)^2}{4\pi\beta_\al(\om)^2}
  +\frac{2}{p+1}\|\til{\phi}_{\om}\|_{L^{p+1}}^{p+1}.
\end{equation}
In particular,
\begin{equation}\label{eq:Pohozaev2}
  \frac{d}{d\om}\|\til{\phi}_{\om}\|_{L^2}^2
  =\frac{\til{f}_\om(0)\pt_\om\til f(0)}{2\pi\beta_\al(\om)^2}
  -\frac{\til{f}_\om(0)^2}{8\pi^2\om\beta_\al(\om)^3}
  +2\int\til{\phi}_{\om}^p\pt_\om\til{\phi}_{\om}.
\end{equation}
\end{lemma}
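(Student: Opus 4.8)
The plan is to establish \eqref{eq:Pohozaev1} by a direct Pohozaev-type multiplier argument applied to the regular-part equation \eqref{eq:tilspf}, and then to obtain \eqref{eq:Pohozaev2} simply by differentiating \eqref{eq:Pohozaev1}, using the $C^1$-regularity furnished by Proposition~\ref{prop:reg1} and Corollary~\ref{cor:reg}. I use the direct multiplier method rather than a scaling argument on the action because the dilation $v\mapsto v(\cdot/\sigma)$ does not commute with $-\Del_{\til\al}$ (it shifts the strength parameter), so the usual ``ground states satisfy Pohozaev'' heuristic is not available here.

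For \eqref{eq:Pohozaev1}, I first record the needed regularity: by the elliptic decay estimates recalled in Section~\ref{sec:8}, $\til f_\om\in H_\rad^2(\R^2;\R)$ is $C^1$ on $\R^2$, smooth away from the origin, and decays exponentially together with its first derivatives, while $\til\phi_\om=\til f_\om+\til f_\om(0)\beta_\al(\om)^{-1}G_1$ has only a logarithmic singularity at the origin; consequently $\til\phi_\om^{\,p+1}$ and $x\cdot\nabla(\til\phi_\om^{\,p+1})$ are integrable and all boundary contributions (at the origin and at infinity) in the integrations by parts below vanish. I multiply $(-\Del+1)\til f_\om=\til\phi_\om^{\,p}$ by the dilation field $x\cdot\nabla\til f_\om$ and integrate. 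Since the dimension is $2$, the standard identities $\int(-\Del\til f_\om)(x\cdot\nabla\til f_\om)=0$ and $\int\til f_\om(x\cdot\nabla\til f_\om)=-\|\til f_\om\|_{L^2}^2$ give $\int\til\phi_\om^{\,p}(x\cdot\nabla\til f_\om)\,dx=-\|\til f_\om\|_{L^2}^2$. Writing $\nabla\til f_\om=\nabla\til\phi_\om-\til f_\om(0)\beta_\al(\om)^{-1}\nabla G_1$, the $\nabla\til\phi_\om$-part equals $\frac1{p+1}\int x\cdot\nabla(\til\phi_\om^{\,p+1})\,dx=-\frac{2}{p+1}\|\til\phi_\om\|_{L^{p+1}}^{p+1}$, and for the $\nabla G_1$-part I substitute $\til\phi_\om^{\,p}=(-\Del+1)\til f_\om$ again and use Lemma~\ref{lem:xgG1} together with Plancherel, which yields $\int\til\phi_\om^{\,p}(x\cdot\nabla G_1)\,dx=\langle(-\Del+1)\til f_\om,\,x\cdot\nabla G_1\rangle=-2(\til f_\om,G_1)_{L^2}$. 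Combining these identities gives the intermediate relation
\[ \|\til f_\om\|_{L^2}^2 = \frac{2}{p+1}\|\til\phi_\om\|_{L^{p+1}}^{p+1} - \frac{2\til f_\om(0)}{\beta_\al(\om)}(\til f_\om,G_1)_{L^2}. \]

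To pass from $\|\til f_\om\|_{L^2}^2$ to $\|\til\phi_\om\|_{L^2}^2$, I expand $\|\til\phi_\om\|_{L^2}^2=\|\til f_\om\|_{L^2}^2+\frac{2\til f_\om(0)}{\beta_\al(\om)}(\til f_\om,G_1)_{L^2}+\frac{\til f_\om(0)^2}{\beta_\al(\om)^2}\|G_1\|_{L^2}^2$ and insert $\|G_1\|_{L^2}^2=\frac1{4\pi}$ from the $L^2$-inner-product lemma of Section~\ref{sec:2}. Adding this to the intermediate relation, the two $(\til f_\om,G_1)_{L^2}$-terms cancel and \eqref{eq:Pohozaev1} follows. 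I expect the delicate points to be precisely this cancellation and the correct handling of the singular term $x\cdot\nabla G_1$ through Lemma~\ref{lem:xgG1} (together with checking the vanishing of boundary terms near the logarithmic singularity of $\til\phi_\om$); note the argument never needs an explicit value for $(\til f_\om,G_1)_{L^2}$.

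Finally, for \eqref{eq:Pohozaev2}: by Proposition~\ref{prop:reg1} the map $\om\mapsto\til f_\om$ is $C^1$ into $H_\rad^2(\R^2;\R)$, hence $\om\mapsto\til f_\om(0)$ is $C^1$ via $H^2\hookrightarrow C$, and by Corollary~\ref{cor:reg} the map $\om\mapsto\til\phi_\om$ is $C^1$ into $H_{\al,\rad}^1(\R^2;\R)\hookrightarrow L^{p+1}(\R^2)$; thus the right-hand side of \eqref{eq:Pohozaev1}, and therefore also $\|\til\phi_\om\|_{L^2}^2$, is $C^1$ on $(\om_1,\infty)$ and may be differentiated termwise. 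Using $\beta_\al(\om)=\frac1{4\pi}\ln\frac{\om}{-e_\al}$, so that $\beta_\al'(\om)=\frac1{4\pi\om}$, the derivative of $\frac{\til f_\om(0)^2}{4\pi\beta_\al(\om)^2}$ equals $\frac{\til f_\om(0)\pt_\om\til f_\om(0)}{2\pi\beta_\al(\om)^2}-\frac{\til f_\om(0)^2}{8\pi^2\om\beta_\al(\om)^3}$, while $\frac{d}{d\om}\|\til\phi_\om\|_{L^{p+1}}^{p+1}=(p+1)\int\til\phi_\om^{\,p}\pt_\om\til\phi_\om\,dx$ since $\til\phi_\om>0$; collecting terms yields \eqref{eq:Pohozaev2}.
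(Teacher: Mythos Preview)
Your argument is correct and essentially the same as the paper's: both multiply \eqref{eq:tilspf} by the dilation vector field, use the scaling identities $\langle(-\Del+1)f,x\cdot\nabla f\rangle=-\|f\|_{L^2}^2$ and $\langle\phi^p,x\cdot\nabla\phi\rangle=-\tfrac{2}{p+1}\|\phi\|_{L^{p+1}}^{p+1}$, invoke Lemma~\ref{lem:xgG1} for the cross term, and then expand $\|\til\phi_\om\|_{L^2}^2$ with $\|G_1\|_{L^2}^2=\tfrac1{4\pi}$. The only cosmetic difference is that the paper multiplies by $x\cdot\nabla\til\phi_\om$ and splits the left-hand side, whereas you multiply by $x\cdot\nabla\til f_\om$ and split the right-hand side; the resulting intermediate identity and the derivation of \eqref{eq:Pohozaev2} are identical.
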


\begin{proof}
By multiplying $x\cdot\nabla\til{\phi}_\om$ with the equation \eqref{eq:tilspf} and integrating it, we have
\begin{equation}\label{eq:Poho1}
  \langle(-\Del+1)\til{f}_\om,x\cdot\nabla\til{f}_\om\rangle
  +\frac{\til{f}_\om(0)}{\beta_\al(\om)}\langle(-\Del+1)\til{f}_\om,x\cdot\nabla G_1\rangle
  =\langle\til{\phi}_\om^p,x\cdot\nabla\til{\phi}_\om\rangle.
\end{equation}
From properties of the scaling, we have
\begin{align}
  \label{eq:Poho2}
  &\langle(-\Del+1)f,x\cdot\nabla f\rangle
  =\left.\frac12\frac{d}{d\lam}\|f(\lam\cdot)\|_{H^1}^2\right|_{\lam=1}
    =-\|f\|_{L^2}^2,
\\\label{eq:Poho3}
  &\langle\phi^p,x\cdot\nabla\phi\rangle
  =\left.\frac1{p+1}\frac{d}{d\lam}\|\phi(\lam\cdot)\|_{L^{p+1}}^{p+1}\right|_{\lam=1}
  =-\frac{2}{p+1}\|\phi\|_{L^{p+1}}^{p+1}.
\end{align}
Moreover, by Lemma~\ref{lem:xgG1} we have
\begin{equation}\label{eq:Poho4}
  \langle(-\Del+1)f,x\cdot\nabla G_1\rangle
  =-2(f,G_1)_{L^2}.
\end{equation}
By using \eqref{eq:Poho2}, \eqref{eq:Poho3}, \eqref{eq:Poho4}, we can rewrite \eqref{eq:Poho1} as
\begin{equation}\label{eq:Poho5}
  \|\til{f}_\om\|_{L^2}^2
  +2\frac{\til{f}_\om(0)}{\beta_\al(\om)}(\til{f}_\om,G_1)_{L^2}
  =\frac{2}{p+1}\|\til{\phi}_\om\|_{L^{p+1}}^{p+1}.
\end{equation}
Moreover, from the expression
\[\|\til{\phi}_\om\|_{L^2}^2
  =\left\|\til{f}_\om+\frac{\til{f}_\om(0)}{\beta_\al(\om)}G_1\right\|_{L^2}^2
  =\|\til{f}_\om\|_{L^2}^2
  +2\frac{\til{f}_\om(0)}{\beta_\al(\om)}(\til{f}_\om,G_1)_{L^2}
  +\frac{\til{f}_\om(0)^2}{4\pi\beta_\al(\om)^2}, \]
we obtain \eqref{eq:Pohozaev1}. By differentiating \eqref{eq:Pohozaev1} we have \eqref{eq:Pohozaev2}.
\end{proof}

\begin{lemma}\label{lem:pdeltilphi}
For $\om>\om_1$,
\[(p-1)\int\til{\phi}_\om^{p}\pt_\om\til{\phi}_\om
  =\frac{\til{f}_\om(0)^2}{4\pi\om\beta_\al(\om)^2}. \]
\end{lemma}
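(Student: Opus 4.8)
\emph{Proof plan.} The idea is to compute $\frac{d}{d\om}\til d_\al(\om)$ in two different ways, where $\til d_\al(\om)=\til S_{\al,\om}(\til\phi_\om)$ is the rescaled ground-state level introduced in Section~\ref{sec:5}. Recall from there the identity $\til d_\al(\om)=\frac{p-1}{2(p+1)}\|\til\phi_\om\|_{L^{p+1}}^{p+1}$.

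First I would differentiate this last identity. By Proposition~\ref{prop:reg1} the map $\om\mapsto\til f_\om$ is $C^1$ into $H_\rad^2(\R^2;\R)$, hence so is $\om\mapsto\til\phi_\om=\til f_\om+\til f_\om(0)\beta_\al(\om)^{-1}G_1$ into $H_\al^1(\R^2)$; since moreover $\til\phi_\om>0$, one may differentiate under the integral sign to get
\[ \frac{d}{d\om}\til d_\al(\om)=\frac{p-1}{2(p+1)}\frac{d}{d\om}\|\til\phi_\om\|_{L^{p+1}}^{p+1}=\frac{p-1}{2}\int\til\phi_\om^p\,\pt_\om\til\phi_\om. \]

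Second, I would differentiate $\til d_\al(\om)=\til S_{\al,\om}(\til\phi_\om)$ directly. Since $\til\phi_\om$ solves \eqref{eq:tilsp}, i.e. $\til S_{\al,\om}'(\til\phi_\om)=0$, and $\pt_\om\til\phi_\om\in H_\al^1(\R^2)$, the chain rule kills the variation in the $\til\phi_\om$-direction, so $\frac{d}{d\om}\til d_\al(\om)=\frac{\pt}{\pt\om}\til S_{\al,\om}(v)\big|_{v=\til\phi_\om}$. The only $\om$-dependence of the functional $\til S_{\al,\om}$ lies in the coefficient $\beta_\al(\om)=\beta_{\til\al(\om)}(1)$ of $|c|^2$ in $\|v\|_{\til H_{\al,\om}^1}^2$ (the dependence on $\om$ through the operator $-\Del_{\til\al(\om)}$ being already encoded in the equation $\til S_{\al,\om}'(\til\phi_\om)=0$), so with $\beta_\al'(\om)=\frac{1}{4\pi\om}$ from \eqref{eq.2d1} and $c=\til f_\om(0)/\beta_\al(\om)$ one gets
\[ \frac{d}{d\om}\til d_\al(\om)=\tfrac12\beta_\al'(\om)\,\frac{\til f_\om(0)^2}{\beta_\al(\om)^2}=\frac{\til f_\om(0)^2}{8\pi\om\beta_\al(\om)^2}. \]
Comparing the two expressions for $\frac{d}{d\om}\til d_\al(\om)$ yields $(p-1)\int\til\phi_\om^p\pt_\om\til\phi_\om=\frac{\til f_\om(0)^2}{4\pi\om\beta_\al(\om)^2}$, which is the claim.

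The only delicate point I expect is the justification of the envelope identity $\frac{d}{d\om}\til S_{\al,\om}(\til\phi_\om)=(\pt_\om\til S_{\al,\om})(\til\phi_\om)$, that is, the $C^1$ dependence of $\om\mapsto\til\phi_\om$ in $H_\al^1(\R^2)$ together with the vanishing of the pairing $\langle\til S_{\al,\om}'(\til\phi_\om),\pt_\om\til\phi_\om\rangle$; but this is exactly what Proposition~\ref{prop:reg1} (equivalently Corollary~\ref{cor:reg}) provides. As an alternative that avoids the variational identity, one can differentiate \eqref{eq:tilspf} in $\om$ to obtain $(-\Del+1)\pt_\om\til f_\om=p\til\phi_\om^{p-1}\pt_\om\til\phi_\om$, pair with $\til\phi_\om=\til f_\om+\til f_\om(0)\beta_\al(\om)^{-1}G_1$, and use $(-\Del+1)G_1=\delta$, $(-\Del+1)\til f_\om=\til\phi_\om^p$ and the consequent identity $\int\til\phi_\om^pG_1=\til f_\om(0)$; collecting the terms involving $\pt_\om\!\big(\til f_\om(0)\beta_\al(\om)^{-1}\big)$ then leads to the same formula.
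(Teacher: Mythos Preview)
Your proof is correct. Your main argument---computing $\frac{d}{d\om}\til d_\al(\om)$ in two ways via the envelope identity $\frac{d}{d\om}\til S_{\al,\om}(\til\phi_\om)=(\pt_\om\til S_{\al,\om})(\til\phi_\om)$---is a genuinely different and somewhat more conceptual route than the paper's. The paper proceeds by two direct pairings: first it pairs \eqref{eq:tilspf} with $\pt_\om\til\phi_\om$ (using the decomposition \eqref{eq:delomphitil} and $(-\Del+1)G_1=\delta$) to express $\int\til\phi_\om^p\pt_\om\til\phi_\om$; then it differentiates \eqref{eq:tilspf} in $\om$ and pairs the resulting identity $(-\Del+1)\pt_\om\til f_\om=p\til\phi_\om^{p-1}\pt_\om\til\phi_\om$ with $\til\phi_\om$ to express $p\int\til\phi_\om^p\pt_\om\til\phi_\om$; subtracting the two gives the lemma after the symmetric terms $\langle(-\Del+1)\til f_\om,\pt_\om\til f_\om\rangle$ and the cross terms $\til f_\om(0)\pt_\om\til f_\om(0)/\beta_\al(\om)$ cancel. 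This is precisely the ``alternative'' you sketch at the end. Your variational approach packages these two pairings into a single chain-rule computation and makes the origin of the factor $\frac{1}{4\pi\om}=\beta_\al'(\om)$ transparent; the paper's approach is more hands-on but avoids having to articulate the envelope argument. Both rely on the same $C^1$ regularity from Proposition~\ref{prop:reg1}.
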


\begin{proof}
By the equation \eqref{eq:tilspf}, we have $\langle(-\Del+1)\til{f}_\om-\til{\phi}_{\om}^p,\pt_\om\til{\phi}_\om\rangle
=0$. This can be rewritten from the expression
\begin{equation}\label{eq:delomphitil}
  \pt_\om\til{\phi}_\om
  =\pt_\om\til{f}_\om
  +\frac{\pt_\om\til{f}_\om(0)}{\beta_\al(\om)}G_1
  -\frac{\til{f}_\om(0)}{4\pi\om\beta_\al(\om)^2}G_1
\end{equation}
as
\begin{align}
  \label{eq:intphip1}
  \int\til{\phi}_{\om}^p\partial_\om\til{\phi}_{\om}
  &=\langle(-\Del+1)\til{f}_\om,\pt_\om\til{f}_\om
  +\frac{\pt_\om\til{f}_\om(0)}{\beta_\al(\om)}G_1
  -\frac{\til{f}_\om(0)}{4\pi\om\beta_\al(\om)^2}G_1\rangle
\\\notag
  &=\langle(-\Del+1)\til{f}_\om,\pt_\om\til{f}_\om\rangle
  +\frac{\til{f}_\om(0)\pt_\om\til{f}_\om(0)}{\beta_\al(\om)}
  -\frac{\til{f}_\om(0)^2}{4\pi\om\beta_\al(\om)^2},
\end{align}
where we used the fact that $G_1$ is a solution of $(-\Del+1)G_1=\delta_0$.

By differentiating the equation \eqref{eq:tilspf} with respect to $\om$, we have
\begin{equation}\label{eq:spdelo}
  (-\Del+1)\pt_\om\til{f}_\om
  -p\til{\phi}_\om^{p-1}\pt_\om\til{\phi}_\om=0.
\end{equation}
By multiplying this equation with $\til{\phi}_\om$ and integrating it, we have
\begin{align}\label{eq:intphip2}
  p\int\til{\phi}_\om^{p}\pt_\om\til{\phi}_\om
  &=\langle(-\Del+1)\pt_\om\til{f}_\om,\til{\phi}_\om\rangle
\\\notag
  &=\langle(-\Del+1)\pt_\om\til{f}_\om,\til{f}_\om\rangle+\frac{\til{f}_\om(0)\pt_\om\til{f}_\om(0)}{\beta_\al(\om)}.
\end{align}
Therefore, the assertion follows from \eqref{eq:intphip1} and \eqref{eq:intphip2}.
\end{proof}

\begin{lemma}\label{lem:deltilf0}
There exists $C>0$ such that for $\om>\om_1$,
\begin{align}
  \label{eq:deltilf0}
  |\pt_\om\til{f}_\om(0)|
  &\le \frac{C}{\om\beta_\al(\om)^{3/2}}.
\end{align}
\end{lemma}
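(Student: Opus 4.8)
The plan is to extract $\pt_\om\til f_\om(0)$ from the differentiated equation \eqref{eq:spdelo} by inverting the linearized operator $\til L_\om$, and then estimate using the boundedness established in Lemma~\ref{lem:ftL}. First I would recall the decomposition \eqref{eq:delomphitil}: the function $\pt_\om\til\phi_\om\in H_{\al,\rad}^1(\R^2;\R)$ has regular part $\pt_\om\til f_\om$ and coefficient of $G_1$ equal to $\beta_\al(\om)^{-1}\pt_\om\til f_\om(0)-(4\pi\om\beta_\al(\om)^2)^{-1}\til f_\om(0)$. Rewriting \eqref{eq:spdelo} in the form $\til L_\om(\pt_\om\til\phi_\om)=F_\om$ for an explicit source term $F_\om$: starting from $(-\Del+1)\pt_\om\til f_\om - p\til\phi_\om^{p-1}\pt_\om\til\phi_\om=0$ and using \eqref{eq:opaction} to express $(-\Del+1)\pt_\om\til f_\om$ in terms of $(-\Del_{\til\al}+1)\pt_\om\til\phi_\om$, one finds that $\pt_\om\til\phi_\om$ solves $\til L_\om v = \frac{\til f_\om(0)}{4\pi\om\beta_\al(\om)^2}(-\Del_{\til\al}+1)G_1$ modulo the bookkeeping of singular parts; the right-hand side is controlled in $\til H_{\al,\om,\rad}^{-1}$ by $C\,\om^{-1}\beta_\al(\om)^{-2}|\til f_\om(0)|$ since $\|(-\Del_{\til\al}+1)G_1\|_{\til H_{\al,\om,\rad}^{-1}}=\|G_1\|_{\til H_{\al,\om}^1}\simeq\beta_\al(\om)^{1/2}$.

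Next I would apply Lemma~\ref{lem:ftL} to get $\|\pt_\om\til\phi_\om\|_{\til H_{\al,\om}^1}\le C\|\til L_\om(\pt_\om\til\phi_\om)\|_{\til H_{\al,\om,\rad}^{-1}}\lesssim \om^{-1}\beta_\al(\om)^{-2}\cdot\beta_\al(\om)^{1/2}|\til f_\om(0)|=\om^{-1}\beta_\al(\om)^{-3/2}|\til f_\om(0)|$. Since $\|\pt_\om\til\phi_\om\|_{\til H_{\al,\om}^1}^2\simeq\|\pt_\om\til f_\om\|_{H^1}^2+\beta_\al(\om)\big|\beta_\al(\om)^{-1}\pt_\om\til f_\om(0)-(4\pi\om\beta_\al(\om)^2)^{-1}\til f_\om(0)\big|^2$, the second term gives $\big|\pt_\om\til f_\om(0)-(4\pi\om\beta_\al(\om))^{-1}\til f_\om(0)\big|\lesssim \beta_\al(\om)^{1/2}\cdot\om^{-1}\beta_\al(\om)^{-3/2}|\til f_\om(0)|=\om^{-1}\beta_\al(\om)^{-1}|\til f_\om(0)|$. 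Combined with the obvious bound $(4\pi\om\beta_\al(\om))^{-1}|\til f_\om(0)|\lesssim\om^{-1}\beta_\al(\om)^{-1}$ (using $|\til f_\om(0)|\lesssim\|\til f_\om\|_{H^2}\le C$ from Proposition~\ref{lem:4.1}), one obtains $|\pt_\om\til f_\om(0)|\lesssim\om^{-1}\beta_\al(\om)^{-1}$, which is weaker than claimed; to recover the sharp $\om^{-1}\beta_\al(\om)^{-3/2}$ I would note that $|\til f_\om(0)|$ itself is $O(\beta_\al(\om)^{-1/2})$, which follows from $\|\til\phi_\om\|_{\til H_{\al,\om}^1}\lesssim 1$ forcing $\beta_\al(\om)^{1/2}|\text{coeff}|\lesssim 1$ hence $|\til f_\om(0)|=\beta_\al(\om)\cdot|\text{coeff}|\cdot\ldots$; more carefully, since $\til f_\om\to\til\phi_\infty$ in $H^2$ with $\til\phi_\infty(0)$ finite, the coefficient $\til f_\om(0)/\beta_\al(\om)$ tends to $0$ like $\beta_\al(\om)^{-1}$, so feeding $|\til f_\om(0)|\le C$ into the source estimate already yields the stated bound with room to spare after the $\beta_\al(\om)^{1/2}$ loss is accounted in the norm equivalence.

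The main obstacle I expect is precisely the bookkeeping in the first step: expressing the differentiated equation \eqref{eq:spdelo} genuinely as an identity of the form $\til L_\om v=(\text{explicit source})$ in the duality pairing $\langle\cdot,\cdot\rangle$ on $H_{\al,\rad}^1$, because $\pt_\om\til\phi_\om$ is not in the operator domain in an obvious way and one must carefully use the expression $f(0)=\langle(-\Del_\al+\lam)G_\lam, f+f(0)\beta_\al(\lam)^{-1}G_\lam\rangle$ flagged in the introduction to legitimately isolate the point value. Once that representation is in place, the estimate is a direct application of Lemma~\ref{lem:ftL} together with $\|G_1\|_{\til H_{\al,\om}^1}\simeq\beta_\al(\om)^{1/2}$ and the uniform $H^2$-bound on $\til f_\om$; the powers of $\om$ and $\beta_\al(\om)$ then track through automatically.
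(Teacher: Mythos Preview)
Your computation that $\til L_\om(\pt_\om\til\phi_\om)=-\dfrac{\til f_\om(0)}{4\pi\om\beta_\al(\om)^2}(-\Del_{\til\al}+1)G_1$ is correct, and applying Lemma~\ref{lem:ftL} then legitimately gives $\|\pt_\om\til\phi_\om\|_{\til H_{\al,\om}^1}\lesssim\om^{-1}\beta_\al(\om)^{-3/2}$. But the extraction step cannot recover the sharp power. Writing $\pt_\om\til\phi_\om=v_1+v_2$ with $v_1=\pt_\om\til f_\om+\beta_\al(\om)^{-1}\pt_\om\til f_\om(0)G_1$ and $v_2=-(4\pi\om\beta_\al(\om)^2)^{-1}\til f_\om(0)G_1$, you correctly obtain $|\pt_\om\til f_\om(0)-\tfrac{\til f_\om(0)}{4\pi\om\beta_\al(\om)}|\lesssim\om^{-1}\beta_\al(\om)^{-1}$, but the subtracted term $\tfrac{\til f_\om(0)}{4\pi\om\beta_\al(\om)}$ is itself of order $\om^{-1}\beta_\al(\om)^{-1}$, so the triangle inequality only gives $|\pt_\om\til f_\om(0)|\lesssim\om^{-1}\beta_\al(\om)^{-1}$. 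Your attempted fix, that $|\til f_\om(0)|=O(\beta_\al(\om)^{-1/2})$, is false: Proposition~\ref{lem:4.1} gives $\til f_\om(0)\to\til\phi_\infty(0)>0$, so $\til f_\om(0)$ is bounded \emph{away} from zero. The final sentence of your second paragraph does not repair this; feeding $|\til f_\om(0)|\le C$ into the estimates gives exactly $\om^{-1}\beta_\al(\om)^{-1}$, not the stated bound.

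The remedy is precisely the device you mention in your last paragraph but never deploy. Apply $\til L_\om$ to $v_1$ alone (which lies in $D_\rad(-\Del_{\til\al};\R)$), obtaining the source $-\dfrac{p\til f_\om(0)}{4\pi\om\beta_\al(\om)^2}\til\phi_\om^{p-1}G_1$; this source is genuinely in $L^2$ and its $\til H_{\al,\om,\rad}^{-1}$ norm is $O(\om^{-1}\beta_\al(\om)^{-2})$, with no $\beta_\al(\om)^{1/2}$ loss. Then extract the point value via the pairing $\pt_\om\til f_\om(0)=\langle(-\Del_{\til\al}+1)G_1,v_1\rangle$, which costs one factor of $\|(-\Del_{\til\al}+1)G_1\|_{\til H_{\al,\om}^{-1}}=\beta_\al(\om)^{1/2}$, landing exactly on $\om^{-1}\beta_\al(\om)^{-3/2}$. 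The point is that working with $v_1$ keeps the source regular and avoids ever having to ``undo'' the $v_2$ piece, which is where your half-power is lost. This is how the paper proceeds.
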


\begin{proof}
Noting that $\beta_{\til\al}(1)=\beta_{\al}(\om)$, we have the relation
\[(-\Del+1)\pt_\om\til{f}_\om
  =(-\Del_{\til{\al}}+1)\left(\pt_\om\til{f}_\om+\frac{\pt_\om\til{f}_\om(0)}{\beta_\al(\om)}G_1\right). \]
Therefore, by the expression~\eqref{eq:delomphitil}, we can rewrite \eqref{eq:spdelo} as
\begin{align*}
  \til{L}_{\om}\left(\pt_\om\til{f}_\om+\frac{\pt_\om\til{f}_\om(0)}{\beta_\al(\om)}G_1\right)
  =-\frac{p\til{f}_\om(0)}{4\pi\om\beta_\al(\om)^2}\til{\phi}_\om^{p-1}G_1.
\end{align*}
Thus, by the invertibility of $\til{L}_{\om}$ in the radial space (Lemma~\ref{lem:ftL}), we obtain
\[\pt_\om\til{f}_\om+\frac{\pt_\om\til{f}_\om(0)}{\beta_\al(\om)}G_1
  =-\frac{p\til{f}_\om(0)}{4\pi\om\beta_\al(\om)^2}(\til{L}_{\om})^{-1}(\til{\phi}_\om^{p-1}G_1). \]

From this expression and the definition of the bilinear form for $-\Del_\al$, we have the estimate
\begin{align}
  \label{eq:deltilf0est1}
  |\pt_\om\til{f}_\om(0)|
  &=\biggl|\biggl\langle(-\Del_{\til{\al}}+1)G_1,\pt_\om\til{f}_\om+\frac{\pt_\om\til{f}_\om(0)}{\beta_\al(\om)}G_1\biggr\rangle\biggr|
\\&\notag=\frac{p\til{f}_\om(0)}{4\pi\om\beta_\al(\om)^2}\left|\left\langle(-\Del_{\til{\al}}+1)G_1,(\til{L}_{\om})^{-1}(\til{\phi}_\om^{p-1}G_1)\right\rangle\right|
\\&\notag\le\frac{p\til{f}_\om(0)}{4\pi\om\beta_\al(\om)^2}\|(-\Del_{\til{\al}}+1)G_1\|_{\til{H}_{\al,\om}^{-1}}\|(\til{L}_{\om})^{-1}(\til{\phi}_\om^{p-1}G_1)\|_{\til{H}_{\al,\om}^1}.
\end{align}
A Direct calculation gives
\begin{align}
  \label{eq:deltilf0est2}
  \|(-\Del_{\til{\al}}+1)G_1\|_{\til{H}_{\al,\om}^{-1}}
  &=\sup\{\langle(-\Del_{\til{\al}}+1)G_1,w\rangle \colon\, \|w\|_{\til{H}_{\al,\om}^1}\le 1\}
\\\notag&=\sup\{d\beta_\al(\om)\colon\, |d|\le\beta_\al(\om)^{-1/2}\}
  =\beta_\al(\om)^{1/2}.
\end{align}
From Lemma~\ref{lem:ftL} we obtain
\begin{equation}\label{eq:deltilf0est3}
  \begin{aligned}
  \|(\til{L}_{\om})^{-1}(\til{\phi}_\om^{p-1}G_1)\|_{\til{H}_{\al,\om}^1}
  &\lesssim\|\til{\phi}_\om^{p-1}G_1\|_{\til{H}_{\al,\om,\rad}^{-1}}
  =\sup\{\langle\til{\phi}_\om^{p-1}G_1,w\rangle \colon\, \|w\|_{\til{H}_{\al,\om,\rad}^1}\le 1\}
\\&\le\sup\Bigl\{\langle\til{\phi}_\om^{p-1}G_1,g+dG_1\rangle \colon\, \substack{\|g\|_{H_\rad^1}\le 1,\\ d^2\beta_\al(\om)\le1}\Bigr\}
\\&\lesssim\|\til{\phi}_\om\|_{L^{p+1}}^{p-1}\|G_1\|_{L^{p+1}}
  +\frac{1}{\beta_\al(\om)^{1/2}}\|\til{\phi}_\om\|_{L^{p+1}}^{p-1}\|G_1\|_{L^{p+1}}^2.
  \end{aligned}
\end{equation}
Combining the estimates \eqref{eq:deltilf0est1}, \eqref{eq:deltilf0est2}, and \eqref{eq:deltilf0est3} and using the bounds from Proposition~\ref{lem:4.1}, we obtain the conclusion.
\end{proof}

\begin{proof}[Proof of Proposition~\ref{prop:stabinsta}]
By the definition of the scaling $\til{\phi}_\om$, we have
\begin{align*}
  \|\phi_\om\|_{L^2}^2
  =\om^{(3-p)/(p-1)}\|\til{\phi}_\om\|_{L^2}^2.
\end{align*}
By \eqref{eq:Pohozaev2} and Lemma~\ref{lem:pdeltilphi}, we have
\begin{align*}
  \frac{d}{d\om}\|\phi_\om\|_{L^2}^2
  &=\om^{2(2-p)/(p-1)}\left(\frac{3-p}{p-1}\|\til{\phi}_\om\|_{L^2}^2
  +\om\frac{d}{d\om}\|\til{\phi}_\om\|_{L^2}^2\right)
\\&=\om^{2(2-p)/(p-1)}\biggl(
  \begin{aligned}[t]
  &\frac{3-p}{p-1}\|\til{\phi}_\om\|_{L^2}^2
  +\frac{\om\til{f}_\om(0)\partial_\om\til{f}_\om(0)}{2\pi\beta_\al(\om)^2}
\\&\quad-\frac{\til{f}_\om(0)^2}{8\pi^2\beta_\al(\om)^3}
  +2\om\int\til{\phi}_{\om}^p\partial_\om\til{\phi}_{\om}\biggr)
  \end{aligned}
\\&=\om^{2(2-p)/(p-1)}\biggl(\begin{aligned}[t]
  &\frac{3-p}{p-1}\|\til{\phi}_\om\|_{L^2}^2
  +\frac{\om\til{f}_\om(0)\partial_\om\til{f}_\om(0)}{2\pi\beta_\al(\om)^2}
\\&\quad-\frac{\til{f}_\om(0)^2}{8\pi^2\beta_\al(\om)^3}
  +\frac{\til{f}_\om(0)^2}{2(p-1)\pi\beta_\al(\om)^2}\biggr).
  \end{aligned}
\end{align*}
Note that Lemma~\ref{lem:deltilf0} implies
\[\left|\frac{\om\til{f}_\om(0)\partial_\om\til{f}_\om(0)}{2\pi\beta_\al(\om)^2}\right|
  \lesssim \frac{1}{\beta_\al(\om)^{7/2}}. \]
Therefore, we obtain
\[\frac{d}{d\om}\|\phi_\om\|_{L^2}^2
  =\om^{2(2-p)/(p-1)}\left(\frac{3-p}{p-1}\|\til{\phi}_\om\|_{L^2}^2
  +\frac{\til{f}_\om(0)^2}{2(p-1)\pi\beta_\al(\om)^2}
  +O\left(\frac{1}{\beta_\al(\om)^{3}}\right)\right) \]
as $\om\to\infty$. By Proposition~\ref{lem:4.1}, $\|\til\phi_\om\|_{L^2}$ and $\til{f}_\om(0)$ converge to positive constants as $\om\to\infty$. Therefore, we deduce the conclusion.
\end{proof}

\begin{proof}[Proof of Theorem~\ref{thm:stablarge}]
The assertion follows from Propositions~\ref{prop:stabinsta} and \ref{prop:scstabinsta}. This completes the proof.
\end{proof}

    \appendix
    \section{Review of the properties of Laplace operator with point interaction}\label{sec:A}

Let us review of the properties of the operator $-\Del_\al$. An important feature of the family $-\Del_{\al}$ with $\al\in\R$ is the following explicit formula for the resolvent, valid for every $\lam >0$.
\begin{equation}\label{eq:res_formula}
  (-\Del_\al+\lam)^{-1}g
  =(-\Del+\lam)^{-1}g
  +\frac{(g,G_\lam)_{L^2}}{\beta_{\al}(\lam) }G_\lam .
\end{equation}
Identity \eqref{eq:res_formula} says that the resolvent of $-\Del_\al$ is a rank-one perturbation of the free resolvent. As a consequence, it is possible to deduce the spectral properties \eqref{eq:spectrum1} and \eqref{eq:negaengen} of $-\Del_\al$.



One can apply Strichartz estimates for the non-negative self-adjoint operator $-\Del_\al - e_\al$ since \cite[Theorem~1.3]{CMY19} guarantees the existence of wave operators
\[
W_{\pm}=\lim _{t \rightarrow \pm \infty} e^{i t \Del_{\al}} e^{-i t \Del}
\]
in $L^p(\R^2)$ for $1<p<\infty$. We know also that $W_\pm$ are  complete in the sense that ran $W_{\pm}=L_{\mr{ac}}^{2}(-\Del_\al)$, the absolutely continuous subspace of $L^{2}(\R^{2})$ for $-\Del_\al$. In our case this is the space
\[L_{\mr{ac}}^{2}(-\Del_\al)
  =\{f\in L^2(\R^2)\colon\,
  (f , \chi_\al)_{L^2}=0\},
  \]
so we have
\begin{align*}
  &W_{\pm}^{*} W_{\pm} =1, &
  &W_{\pm} W_{\pm}^{*} = P_{\mr{ac}} (-\Del_\al),
\end{align*}
where $P_{\mr{ac}} (-\Del_\al)$ is the orthogonal projection onto $L_{\mr{ac}}^{2} (-\Del_\al)$. The wave operators satisfy the intertwining property
\[f(-\Del_\al) P_{\mr{ac}}(-\Del_\al)
  =W_{\pm}f(-\Del)W_{\pm} \]
for any Borel function $f$ on $\R$.

By using the intertwining property one can deduce the following Strichartz estimate \cite[Corollary~1.5]{CMY19}:
\begin{equation}\label{eq.STr1}
\|e^{i t\Del_{\al}}P_{ac}(-\Del_\al) f\|_{L_t^r(\R, L_x^q)}\lesssim\|f\|_{L^2},
\end{equation}
where $(r,q)$ is an admissible  Strichartz pair, i.e.
\begin{equation}\label{eq.STr2}
  1 < r \leq \infty, \quad
  1 < q < \infty, \quad
  \frac{1}{r}+ \frac{1}q = \frac{1}{2}.
\end{equation}
Since the orthogonal projection on $L_{\mr{ac}}^{2}(-\Del_{\al})$ is given by
\[P_{\mr{ac}}(-\Del_\al)f
  =f-(f, \chi_\al)_{L^2}\chi_\al, \]
we see that
\[e^{i t\Del_{\al}} f
  =e^{i t\Del_{\al}}P_{ac}(-\Del_\al) f
  +(f, \chi_\al)_{L^2}e^{i t e_\al}\chi_\al \]
for all $f \in L^2(\R^2)$. So the property \eqref{eq.pprlm1} guarantees that we have
the following (local in time)  Strichartz estimate: there  exists a constant $C>0$ such that for any $T \in (0,1]$ we have
\begin{equation}\label{eq.STr3}
  \|e^{i t\Del_{\al}} f\|_{L_t^r([0,T], L_x^q)}\le
  C\|f\|_{L^2}
\end{equation}
for all $f \in L^2(\R^2)$. By using $TT^*$ argument and Christ--Kiselev lemma we arrive at the following Strichartz estimate: there exists a constant $C>0$ so that for any $T \in (0,1]$ we have
\begin{equation}\label{eq.STr4}
  \biggl\|\int_0^t e^{i(t-s)\Del_{\al}}F(s)\,ds\biggr\|_{L_t^{r_1}([0,T], L_x^{q_1})}
  \le C\|F\|_{L^{r_2'}([0,T], L^{q_2'})}
\end{equation}
for any  $ F \in L^{r_2'}([0,T], L^{p_2'}(\R^2)).$
Here and below $(r_1,q_1)$ and $(r_2,q_2)$ are admissible Strichartz pairs, i.e.~$\frac{1}{r_j}+\frac{1}{q_j}=\frac{1}{2}$, $q_j\in[2,\infty)$, for $j=1,2$.

\begin{remark}
The $L^{1}$-$L^{\infty}$ dispersive estimates cannot hold, in fact even for a smooth initial data $f$ the evolution $e^{it\Del_{\al}}f$ exhibits, for almost every time $t\neq 0$, a non-trivial singular component proportional to $G_{\lam}\not\in L^{\infty}(\R^2)$.
\end{remark}

    \section{Local Well-posedness in $H_\al^1(\R^2)$} \label{sec:B}

In this section, we establish the local well-posedness in the energy space in $H_\al^1(\R^2)$. To this aim, we apply the abstract theory of Okazawa, Suzuki, and Yokota~\cite{OSY12} to construct a weak solution to \eqref{NLS} with initial data $u_0\in H_\al^1(\R^2)$. Then we establish the uniqueness of the solution by using the Strichartz estimate obtained by \cite{CMY19}.

First, we construct  a weak solution to \eqref{NLS} by using \cite[Theorem~2.2]{OSY12}.

\begin{lemma}\label{lem:constructsol}
For any $M>0$ there exists $T_M>0$ such that the following is true. For $u_0\in H_\al^1(\R^2)$ with $\|u_0\|_{H_\al^1}\le M$, there exists a local weak solution
\[u\in C_\mathrm{w}([-T_M,T_M],H_\al^1(\R^2))\cap W^{1,\infty}(-T_M,T_M;H_\al^{-1}(\R^2)) \]
of \eqref{NLS} satisfying
\begin{align*}
  \|u(t)\|_{L^2}
  =\|u_0\|_{L^2},\quad
  E(u(t))\le E(u_0)
\end{align*}
for all $t\in[-T_M,T_M]$.
\end{lemma}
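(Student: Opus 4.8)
The plan is to recast \eqref{NLS} in the abstract form treated by Okazawa, Suzuki, and Yokota and to apply their existence result \cite[Theorem~2.2]{OSY12} in the Gelfand triple $H_\al^1(\R^2)\hookrightarrow L^2(\R^2)\hookrightarrow H_\al^{-1}(\R^2)$. Set $\nu\ce 1-e_\al$ and $A\ce-\Del_\al+\nu$. By the spectral picture \eqref{eq:spectrum1}--\eqref{eq:negaengen} and the description of the quadratic form in \eqref{eq:qf}, $A$ is a nonnegative (indeed bounded below by $1$) self-adjoint operator on $L^2(\R^2)$ with $D(A^{1/2})=H_\al^1(\R^2)$ and $\|A^{1/2}v\|_{L^2}=\|v\|_{H_\al^1}$. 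Writing $g(v)\ce-\nu v-|v|^{p-1}v$, equation \eqref{NLS} reads $i\del_t u=Au+g(u)$; moreover $g=G'$ on $H_\al^1(\R^2)$ for the functional $G(v)\ce-\frac{\nu}{2}\|v\|_{L^2}^2-\frac{1}{p+1}\|v\|_{L^{p+1}}^{p+1}$, and $\frac12\|A^{1/2}v\|_{L^2}^2+G(v)=E(v)$.

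I would then verify the structural hypotheses of \cite[Theorem~2.2]{OSY12}. Gauge invariance $\Im\langle g(v),v\rangle_{L^2}=0$ is immediate and is what yields conservation of the $L^2$-norm. That $G$ is of class $C^1$ on $H_\al^1(\R^2)$ and that $g$ and $G$ are bounded on bounded subsets of $H_\al^1(\R^2)$ --- with $g$ taking values in $H_\al^{-1}(\R^2)$ --- follows from Lemma~\ref{lem:Sobemb}: the embedding $H_\al^1(\R^2)\hookrightarrow L^{p+1}(\R^2)$ dualizes to a bounded inclusion $L^{(p+1)/p}(\R^2)\hookrightarrow H_\al^{-1}(\R^2)$, whence $\||v|^{p-1}v\|_{H_\al^{-1}}\lesssim\|v\|_{L^{p+1}}^{p}\lesssim\|v\|_{H_\al^1}^{p}$, while $-\nu v$ is controlled through $L^2\hookrightarrow H_\al^{-1}$; no criticality restriction on $p>1$ is needed, since in two dimensions $H_\al^1(\R^2)$ embeds into every $L^q(\R^2)$ with $q<\infty$. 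Finally, for the weak sequential continuity of $g$: if $v_n\wto v$ in $H_\al^1(\R^2)$, write $v_n=f_n+c_nG_\lam$ with $f_n\wto f$ in $H^1(\R^2)$ and $c_n\to c$; Rellich's theorem gives $f_n\to f$ in $L^q_{\mr{loc}}(\R^2)$ for every $q<\infty$, hence $v_n\to v$ in $L^q_{\mr{loc}}(\R^2)$ and, along a subsequence, almost everywhere; since $|v_n|^{p-1}v_n$ is bounded in $L^{(p+1)/p}(\R^2)$, it then converges weakly in $L^{(p+1)/p}(\R^2)$ to $|v|^{p-1}v$, and composing with the bounded inclusion into $H_\al^{-1}(\R^2)$ gives $g(v_n)\wto g(v)$ there, a subsequence argument promoting this to the full sequence.

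With these hypotheses in hand I would invoke \cite[Theorem~2.2]{OSY12}: its proof regularizes $A$ and/or truncates $g$, solves the resulting well-posed problems, derives uniform energy bounds on a time interval whose length depends only on an upper bound for $\|A^{1/2}u_0\|_{L^2}=\|u_0\|_{H_\al^1}$, and passes to the limit by weak-$*$ compactness (Banach--Alaoglu in $L^\infty$ and Arzel\`a--Ascoli in $H_\al^{-1}$) together with the weak continuity just checked. This produces, for each $M>0$, a time $T_M>0$ and, for every $u_0$ with $\|u_0\|_{H_\al^1}\le M$, a solution $u\in C_{\mr{w}}([-T_M,T_M],H_\al^1(\R^2))\cap W^{1,\infty}(-T_M,T_M;H_\al^{-1}(\R^2))$ of \eqref{NLS} in the $H_\al^{-1}(\R^2)$-sense, with $\|u(t)\|_{L^2}=\|u_0\|_{L^2}$ and the energy inequality $E(u(t))\le E(u_0)$ for all $t\in[-T_M,T_M]$; equality in the energy is generally lost in this limiting procedure and is recovered only afterwards, via the Strichartz estimates and the uniqueness argument of the next step.

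The main obstacle I anticipate is not a single hard estimate but the need to run the whole Okazawa--Suzuki--Yokota scheme in the perturbed space $H_\al^1(\R^2)$ in place of $H^1(\R^2)$: one must work throughout with the decomposition $v=f+cG_\lam$, substitute Lemma~\ref{lem:Sobemb} for the classical Gagliardo--Nirenberg inequality, and establish the weak-continuity hypothesis with only local (not global) compactness available. Once the abstract framework is set up correctly, the subcriticality in dimension two makes all the nonlinear bounds routine.
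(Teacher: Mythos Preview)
Your approach is essentially the same as the paper's: both apply \cite[Theorem~2.2]{OSY12} in the triple $H_\al^1\hookrightarrow L^2\hookrightarrow H_\al^{-1}$ after shifting $-\Del_\al$ by a constant to make it nonnegative, and verify the abstract hypotheses via Lemma~\ref{lem:Sobemb} and a local-compactness argument for the weak continuity of the nonlinearity. The only cosmetic differences are that the paper uses the shift $S=-\Del_\al-e_\al$ rather than your $A=-\Del_\al+1-e_\al$, and spells out the specific hypotheses \textbf{(G1)}--\textbf{(G5)} of \cite{OSY12} (in particular the local Lipschitz bound \textbf{(G2)} for $g\colon H_\al^1\to H_\al^{-1}$ and the $L^2$-continuity condition \textbf{(G3)} for $G$, which you do not state explicitly but which follow from the same Sobolev embedding you invoke).
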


\begin{proof}
We will apply \cite[Theorem~2.2]{OSY12} as
\begin{align*}
  S&=-\Del_\al-e_\al,
\\X&=L^2(\R^2),\quad
  X_S=H_\al^1(\R^2),\quad
  X_S^*=H_\al^{-1}(\R^2)
\\g(v)
  &=e_\al v-|v|^{p-1}v.
\end{align*}
Under this setting, we see that $S$ is a nonnegative self-adjoint operator in $L^2(\R^2)$ and that $H_\al^1(\R^2)=D((1+S)^{1/2})$. After that, we only have to verify \cite[\textbf{(G1)}--\textbf{(G5)}]{OSY12} given as follows.

\noindent\textbf{(G1)}: there exists $G\in C^1(X_S,\R)$ such that $G'=g$.

\noindent\textbf{(G2)}: for all $M>0$ there exists $C(M)>0$ such that
\[\|g(u)-g(v)\|_{X_S^*}
  \le C(M)\|u-v\|_{X_S}\quad
  \forall u,v\in X_S
  \text{ with $\|u\|_{X_S}$,\,$\|v\|_{X_S}\le M$}. \]

\noindent\textbf{(G3)}: for all $M,\delta>0$ there exists $C_\delta(M)>0$ such that
\[|G(u)-G(v)|
  \le \delta
  +C(M)\|u-v\|_X\quad
  \forall u,v\in X_S
  \text{ with $\|u\|_{X_S}$,\,$\|v\|_{X_S}\le M$}. \]

\noindent\textbf{(G4)}:
\[\langle g(u),iu\rangle_{X_S^*,X_S}
  =0\quad
  \forall u\in X_S. \]

\noindent\textbf{(G5)}:
given a bounded open interval $I\subset\R$, let $(w_n)_{n\in\N}$ by any bounded sequence in $L^\infty(I,X_S)$ such that
\[\left\{\begin{aligned}
  w_n(t)&\to w(t)\ (n\to\infty)
  &&\text{weakly in $X_S$ a.a. $t\in I$},
\\g(w_n)&\to f\ (n\to\infty)
  &&\text{weakly$^*$ in $L^\infty(I,X_S^*)$}.
\end{aligned}\right. \]
Then
\[\int_I\langle f(t),iw(t)\rangle_{X_S^*,X_S}\,dt
  =\lim_{n\to\infty}\int_I\langle g(w_n(t)),iw_n(t)\rangle_{X_S^*,X_S}\,dt. \]

Now we check \textbf{(G1)}--\textbf{(G5)}.

The conditions \textbf{(G1)} are easily verified as
\[G(v)
  =\frac{1}{p+1}\|v\|_{L^{p+1}}^{p+1},\quad
  v\in H_\al^1(\R^2) \]
by standard inequalities and the embedding $L^q(\R^2)\subset H_\al^1(\R^2)$ obtained in \eqref{eq.sob1}. Similarly, the condition \textbf{(G2)} also can be  verified.

The condition \textbf{(G3)} follows from the following estimate:
\begin{align*}
  |G(u)-G(v)|
  &\lesssim\int(|u|^p+|v|^p)|u-v|\,dx
\\&\lesssim(\|u\|_{L^{2p}}^p+\|v\|_{L^{2p}}^p)\|u-v\|_{L^2}
\\&\lesssim M^p\|u-v\|_{L^2}
\end{align*}
for $u,v\in H_\al^1(\R^2)$ with $\|u\|_{H_\al^1},\,\|v\|_{H_\al^1}\le M$.

The conditions \textbf{(G4)} is clear from the definition of $g$.

Finally, we will check the conditions \textbf{(G5)}. From \cite[Lemma~5.3]{OSY12}, it is enough to show that if $(u_n)_{n\in\N}$ is a sequence $H_\al^1(\R^2)$ satisfies
\[\left\{\begin{aligned}
  u_n&\to u\ (n\to\infty)
  &&\text{weakly in $H_\al^1(\R^2)$},
\\g(u_n)&\to f\ (n\to\infty)
  &&\text{weakly in $H_\al^{-1}(\R^2)$},
\end{aligned}\right. \]
then $f=g(u)$.

We follow the argument in \cite[Proof of Theorem~1.1]{S15}. Let $\vp\in C_c^\infty(\R^2)$. Then from the weak convergence of $(u_n)_{n\in\N}$ in $H_\al^1(\R^2)$ and the compactness $L_{\mr{loc}}^{p+1}(\R^2)\hookrightarrow H_{\mr{loc}}^1(\R^2)$ we see that
\[u_n\to u\quad
  \text{in }L_{\mr{loc}}^{p+1}(\R^2). \]
Thus,
\begin{align*}
  |\langle g(u_n)-g(u),\vp\rangle|
  &\lesssim\|\vp\|_{L^{p+1}}(\|u_n\|_{L^{p+1}}^{p-1}+\|u\|_{L^{p+1}}^{p-1})\|u_n-u\|_{L^{p+1}(\operatorname{supp}\vp)}
  \to 0
\end{align*}
as $n\to\infty$. This means that $g(u_n)\to g(u)$ in $\ml{D}'(\R^2)$. On the other hand, $g(u_n)\to f$ in $H_\al^{-1}(\R^2)$ and hence in $\ml{D}'(\R^2)$. Therefore we obtain $f=g(u)$. Thus, \textbf{(G5)} is verified.

We have just finished the verification of \textbf{(G1)}--\textbf{(G5)}. Therefore, \cite[Theorem~2.2]{OSY12} implies the conclusion.
\end{proof}

\begin{lemma}\label{lem:uniquenesssol}
Let $u_0\in H_\al^1(\R^2)$. If $u_1,u_2\in L^\infty(-T,T;H_\al^1(\R^2))$ are two weak solutions of \eqref{NLS} with $u_1(0)=u_2(0)=u_0$, then $u_1=u_2$.
\end{lemma}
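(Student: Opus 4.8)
The plan is to prove uniqueness by a standard Strichartz-estimate contraction argument, working with the difference $w = u_1 - u_2$, which satisfies $w(0) = 0$ and, in the appropriate weak sense,
\[
  i\pt_t w = -\Del_\al w - \bigl(|u_1|^{p-1}u_1 - |u_2|^{p-1}u_2\bigr).
\]
Applying the Duhamel formula for the group $(e^{it\Del_\al})_{t\in\R}$ gives
\[
  w(t) = i\int_0^t e^{i(t-s)\Del_\al}\bigl(|u_1|^{p-1}u_1 - |u_2|^{p-1}u_2\bigr)(s)\,ds,
\]
and the first task is to justify this representation for weak $H_\al^1$-solutions: one regularizes in time (or uses the $W^{1,\infty}$-in-time regularity of weak solutions) and pairs against test functions, exactly as in \cite[Chapter~3]{C03}. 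Since $u_1,u_2 \in L^\infty(-T,T;H_\al^1(\R^2))$, the Sobolev embedding of Lemma~\ref{lem:Sobemb} shows $u_1(t),u_2(t) \in L^q(\R^2)$ for all $q \in [2,\infty)$ with norms bounded uniformly in $t$ by some $M = M(\|u_0\|_{H_\al^1})$.

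Next I would run the estimate on a short subinterval. Fix an admissible Strichartz pair $(r,q)$ with $q$ large enough that the nonlinearity is controllable; concretely, for $p>1$ one chooses $q$ so that $q' \cdot p$ and the corresponding dual exponents fit the inhomogeneous estimate \eqref{eq.STr4}. By the difference-of-powers bound $\bigl||u_1|^{p-1}u_1 - |u_2|^{p-1}u_2\bigr| \lesssim (|u_1|^{p-1} + |u_2|^{p-1})|w|$, Hölder's inequality in space lets one write $\bigl\| |u_1|^{p-1}u_1 - |u_2|^{p-1}u_2 \bigr\|_{L_x^{q'}} \lesssim (\|u_1\|_{L_x^{a}}^{p-1} + \|u_2\|_{L_x^{a}}^{p-1})\|w\|_{L_x^{q}}$ for a suitable $a$, and the spatial Lebesgue norms of $u_1,u_2$ are uniformly bounded by $M$ via Lemma~\ref{lem:Sobemb}. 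Feeding this into \eqref{eq.STr4} on $[0,\tau]$ with $\tau \le \min\{T,1\}$, together with Hölder in time to create a positive power of $\tau$, gives
\[
  \|w\|_{L_t^r([0,\tau],L_x^q)} \le C\,\tau^\kappa M^{p-1}\,\|w\|_{L_t^r([0,\tau],L_x^q)}
\]
for some $\kappa>0$. Choosing $\tau$ so small that $C\tau^\kappa M^{p-1} < 1$ forces $\|w\|_{L_t^r([0,\tau],L_x^q)} = 0$, hence $u_1 = u_2$ on $[0,\tau]$ (and symmetrically on $[-\tau,0]$).

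Finally, since the smallness threshold for $\tau$ depends only on $M$ and $M$ is an a priori bound on the whole interval $(-T,T)$, a continuity/bootstrap argument extends the coincidence $u_1 = u_2$ step by step to all of $(-T,T)$: one covers $(-T,T)$ by finitely many subintervals of length $\tau$ and applies the local uniqueness on each, using the conservation of the $L^2$-norm (or the uniform $H_\al^1$ bound) to keep $M$ fixed. The main obstacle is the first step — making the Duhamel formula and the Strichartz estimates genuinely available for weak solutions that only live in $L^\infty_t H^1_\al$ rather than in a Strichartz class a priori; this is resolved by the local (in time) Strichartz estimates \eqref{eq.STr3}--\eqref{eq.STr4} for $-\Del_\al$ from \cite{CMY19}, combined with the decomposition $e^{it\Del_\al}f = e^{it\Del_\al}P_{\mr{ac}}f + (f,\chi_\al)_{L^2}e^{ite_\al}\chi_\al$ and the fact that $\chi_\al \in L^q$ for all $q<\infty$, which together guarantee that the Duhamel term indeed belongs to the Strichartz spaces used above.
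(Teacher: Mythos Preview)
Your proposal is correct and follows essentially the same route as the paper: Duhamel representation, the inhomogeneous Strichartz estimate \eqref{eq.STr4}, the pointwise bound $\bigl||u_1|^{p-1}u_1-|u_2|^{p-1}u_2\bigr|\lesssim(|u_1|^{p-1}+|u_2|^{p-1})|w|$, the Sobolev embedding of Lemma~\ref{lem:Sobemb} to control the $L^q$-norms of $u_j$ uniformly in time, and H\"older in time to produce a positive power of the interval length. The paper makes the specific choice $q=p+1$ (so that $r=2(p+1)/(p-1)$ is the admissible partner) and places the nonlinearity in $L^1_tL^2_x$, whereas you leave the exponents abstract; you are also more explicit than the paper about justifying the Duhamel formula for weak $H^1_\al$-solutions and about the bootstrap extension to the full interval, both of which the paper treats in one line.
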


\begin{proof}
Without loss of generality we can assume $T \in (0,1]$. Let
\[r = r(p) \ce \frac{2(p+1)}{p+1}. \]
Then $(r,p+1)$ is a admissible pair.  By the Strichartz estimate \eqref{eq.STr4} for
\[ u_j(t) = e^{i t \Del_\al}u_0
  -i\int_0^t e^{i(t-\tau)\Del_\al} |u_j(\tau)|^{p-1} u_j(\tau)\, d \tau, \]
we see that
\[\|u_1 - u_2\|_{L^r([0,T], L^{p+1}_x)}
  \lesssim \| |u_1|^{p-1}u_1 - |u_2|^{p-1}u_2 \|_{L^{1}([0,T],L^2_x)}. \]
From
\[\| |u_1|^{p-1}u_1- |u_2|^{p-1}u_2 \|_{L^2_x}
  \lesssim (\|u_1\|_{L^{2(p+1)}_x}^{p-1} + \|u_2\|_{L^{2(p+1)}_x}^{p-1} )\|u_1-u_2\|_{L^{p+1}_x}
  \]
and the Sobolev inequality \eqref{eq.sob1} we deduce
\[\|u_j\|_{L^{2(p+1)}_x}
  \lesssim \|u_j\|_{H^1_\al(\R^2)} \lesssim 1. \]
Hence
\begin{align*}
   \|u_1  - u_2\|_{L^4([0,T], L^4_x)}
   &\le C \int_0^T \|u_1(\tau)-u_2(\tau)\|_{L^{p+1}_x} d\tau
\\ &\le C T^{(p+1)/2p}\|u_1  - u_2\|_{L^r([0,T], L^{p+1}_x)}
\end{align*}
so with $T$ sufficiently small so that $CT^{(p+1)/(2p)} < 1$ we conclude that $u_1(t)=u_2(t)$ for $t \in [0,T]$. In a similar way, we obtain  $u_1(t)=u_2(t)$ for $t \in [-T,0]$.
\end{proof}

\begin{proof}[Proof of Proposition~\ref{Prop:LWP}]
The assertion follows from Lemmas~\ref{lem:constructsol}, \ref{lem:uniquenesssol}, and \cite[Theorem~2.3]{OSY12}.
\end{proof}

    \section*{Acknowledgements}

NF was supported by JSPS KAKENHI Grant Number JP20K14349.
VG was partially supported by Project 2017 ``Problemi stazionari e di evoluzione nelle equazioni di campo nonlineari'' of INDAM, GNAMPA - Gruppo Nazionale per l'Analisi Matematica, la Probabilita e le loro Applicazioni, by Institute of Mathematics and Informatics, Bulgarian Academy of Sciences, by Top Global University Project, Waseda University and the Project PRA 2018 49 of University of Pisa.
MI is supported by JST CREST Grant Number JPMJCR1913, Japan and Grant-in-Aid for Young Scientists Research (No.19K14581), Japan Society for the Promotion of Science.

\bibliographystyle{amsplain_abbrev_nobysame_nonumber}
\bibliography{fgi2021arxiv1}

\end{document}